\pdfminorversion=4
\def\submitversion{2}%
\if\submitversion2
\documentclass[onecolumn,draftclsnofoot,12pt]{IEEEtran}
\usepackage[square,sort,comma,numbers]{natbib}
\fi
\if\submitversion1
\documentclass[twoside]{article}
\usepackage[accepted]{aistats2022}

\setlength{\pdfpageheight}{11in}
\setlength{\pdfpagewidth}{8.5in}
\usepackage[round]{natbib}

\fi

\usepackage{enumitem}
\usepackage{amssymb}
\newlist{todolist}{itemize}{2}
\setlist[todolist]{label=$\square$}
\usepackage{amsmath}
\usepackage{algorithm}
\usepackage{algorithmic}
\usepackage{amsmath}
\usepackage{amsthm}
\usepackage{authblk}
\usepackage{microtype}
\usepackage{graphicx}
\usepackage{subfigure}
\usepackage{booktabs} %
\usepackage{bm}
\usepackage{graphics}
\usepackage{tikz}
\usetikzlibrary{arrows}
\usetikzlibrary{positioning,arrows,shapes,chains,fit,scopes}
\usepackage{pgfplots}
\usepackage{pgfplotstable}
\usepackage{multido}
\usepackage{pstricks}
\usepackage{mwe} %
\usepackage{hyperref}
\usepackage{mathtools}
\usepackage{algorithm}
\usepackage{algorithmic}
\usepackage{listings}
\usepackage{lipsum}

\RequirePackage{listings}
\RequirePackage{xcolor}
\definecolor{dkgreen}{rgb}{0,0.6,0}
\definecolor{gray}{rgb}{0.5,0.5,0.5}
\definecolor{mauve}{rgb}{0.58,0,0.82}
\DeclareMathOperator{\diag}{diag}
\newtheorem{theorem}{Theorem}
\newtheorem{proposition}{Proposition}
\newtheorem{definition}{Definition}
\newtheorem{remark}{Remark}
\newtheorem{lemma}{Lemma}
\newtheorem{assumption}{Assumption}
\newtheorem*{theorem*}{Theorem}

\newcommand{\trans}{^{\mathrm T}}

\newcommand{\diff}{\,\mathrm{d}}

\DeclarePairedDelimiterX{\inp}[2]{\langle}{\rangle}{#1, #2}

\definecolor{longhorn}{rgb}{0.8, 0.33, 0.0}

\newcommand\powerP[1]{\left( #1 \right)^{1/p}}
\newcommand{\hmu}{\hat{\mu}}
\newcommand{\hnu}{\hat{\nu}}
\newcommand{\KPW}{\mathcal{KP}W}
\newcommand{\Tr}{\mathrm{Tr}}
\newcommand{\Te}{\mathrm{Te}}
\newcommand{\mnstd}[2]{#1{$\pm$#2}}
\hypersetup{hidelinks}

\if\submitversion2
\title{Two-Sample Test with Kernel Projected Wasserstein Distance}
\author{
Jie Wang, Rui Gao, Yao Xie
\thanks{J.~Wang and Y.~Xie are with H. Milton Stewart School of Industrial and Systems Engineering, Georgia Institute of Technology.
R.~Gao is with Department of Information, Risk, and Operations Management, University of Texas at Austin.}
}
\fi 

\pgfplotsset{compat=1.14}
\begin{document}
\if\submitversion2
\maketitle
\fi 

\if\submitversion1
\twocolumn[

\aistatstitle{Two-Sample Test with Kernel Projected Wasserstein Distance}

\aistatsauthor{ Jie Wang \And Rui Gao \And  Yao Xie }

\aistatsaddress{ Georgia Institute of Technology \And  University of Texas at Austin \And Georgia Institute of Technology } ]
\fi

\begin{abstract}%
We develop a kernel projected Wasserstein distance for the two-sample test, an essential building block in statistics and machine learning: given two sets of samples, to determine whether they are from the same distribution. 
This method operates by finding the nonlinear mapping in the data space which maximizes the distance between projected distributions.
In contrast to existing works about projected Wasserstein distance, the proposed method circumvents the curse of dimensionality more efficiently.
We present practical algorithms for computing this distance function together with the non-asymptotic uncertainty quantification of empirical estimates.
Numerical examples validate our theoretical results and demonstrate good performance of the proposed method.
\end{abstract}

\section{
INTRODUCTION
}
As a fundamental problem in statistical inference \citep{young2005essentials}, two-sample hypothesis testing aims to determine whether two sets of samples come from the same distribution or not.
This problem has broad applications in scientific discovery fields.
For example, it can be applied in anomaly detection~\citep{Chandola2009, savage2014anomaly, ahmed2016survey} to identify abnormal observations that follow a distinct distribution compared with typical observations.
Similarly, in change-point detection~\citep{Vincent08,xie2020sequential,Liyanchange_21}, two-sample testing is essential to detect abrupt changes in streaming data. 
Other notable examples include model criticism~\citep{Lloyd15, chwialkowski2016kernel, bikowski2018demystifying}, causal inference~\citep{lopezpaz2018revisiting}, and health care~\citep{Schober19}.

Parametric or low-dimensional testing scenarios have been the main focus in classical literature.
When extra knowledge about the data distributions is available, one can design parametric tests, such as Hotelling's two-sample test~\citep{hotelling1931}, Student's t-test~\citep{PFANZAGL96}, etc.
Non-parametric two-sample tests are more attractive when the exact parametric form of the data distributions is hard to specify.
It is popular to design non-parametric tests using integral probability metrics, since the evaluation of the corresponding test statistics can be obtained based on samples without knowing the densities of data distributions.
Some earlier works design tests using Kolmogorov-Smirnov distance~\citep{Pratt1981, Frank51}, total variation distance~\citep{Ga1991}, and Wasserstein distance~\citep{delbarrio1999, ramdas2015wasserstein}.
However, it is not proper to use these tests for high-dimensional settings since the sample complexity for estimating those distance functions based on empirical samples suffers from the curse of dimensionality.

There is a strong need for developing non-parametric tests for high-dimensional data, especially for modern applications. A notable contribution is the two-sample test based on Maximum Mean Discrepancy~(MMD)~\citep{Gretton09, Gretton12, cheng2021kernelmanifold}.
Although the power of MMD test with the median choice of kernel bandwidth decays quickly when the dimension of distributions increases~\citep{reddi2014decreasing}, this test with properly chosen bandwidth does not have the curse of dimensionality issue for low-dimensional manifold data as pointed out in \cite{cheng2021kernelmanifold}.
Unfortunately, the MMD test with optimized bandwidth still does not demonstrate good testing power for the small-sampled case as demonstrated numerically in this paper.
In addition, recent works \citep{wang2020twosample, xie2020sequential} leverage the idea of dimensionality reduction for dealing with high-dimensional settings, which use the projected Wasserstein distance as the test statistic, i.e., the test statistic works by finding the linear projector such that the distance between projected distributions is maximized. However, a linear projector may not serve as an optimal design for maximizing the power of tests as demonstrated numerically in Section~\ref{Sec:experiment}.

In this paper, we present a new non-parametric two-sample test statistic aiming for the high-dimensional setting based on a \emph{kernel projected Wasserstein~(KPW) distance}, with a nonlinear projector based on the reproducing kernel Hilbert space~(RKHS) designed to optimize the test power via maximizing the probability distance between the distributions after projection. In addition, our contributions include the following:
\begin{itemize}
\item
We develop a computationally efficient algorithm for evaluating the KPW using a representer theorem to reformulate the problem into a finite-dimensional optimization problem and a block coordinate descent optimization algorithm
which is guaranteed to find an $\epsilon$-stationary point with complexity $\mathcal{O}\left(\epsilon^{-3}\right)$.
\item
To quantify the false detection rate, which is essential in setting the detection threshold, we develop non-asymptotic bounds for empirical KPW distance based on the covering number argument.%
\item 
We present numerical experiments to validate our theoretical results as well as demonstrate the competitive performance of our proposed test using both synthetic and real data.
\end{itemize}

\noindent\textbf{Related Work.}
It is helpful to understand the structure of high-dimension distributions by low-dimensional projections.
Notable methodologies include the principal component analysis~(PCA) \citep{Jolliffe1986}, kernel PCA~\citep{scholkopf1998nonlinear}, factor analysis~\citep{cudeck2000exploratory}, etc.
Several works leverage this idea to design tests for high-dimensional data.
\cite{mueller2015principal} and \cite{xie2020sequential} first design tests by finding the worst-case linear projector that maximizes the distance between projected sample points in one dimension.
Later \cite{lin2020projection2} and \cite{wang2020twosample} naturally extend this idea by developing a projector that maps sample points into $d$ dimensional linear subspace with $d\ge1$, called projected Wasserstein distance.
Efficient optimization algorithms and statistical properties of this distance have been investigated in recent works \citep{huang2021riemannian, lin2020projection}.
However, a linear projector cannot efficiently capture features from data with nonlinear patterns, limiting the performance of tests mentioned above for practical applications.
It is therefore promising to use nonlinear dimensionality reduction for two-sample testing.
Although nonlinear projectors can be obtained using neural networks \citep{genevay2018learning}, the sample complexity of the corresponding test statistic will have slow convergence rates since the neural network function class usually has high complexity in terms of the covering number.
Recently kernel method has been demonstrated to be beneficial for understanding data~\citep{Minh11,brouard2011semi,HQuang13,kadri2013functional} because of sharp sample complexity rate, low computational cost, and flexible representation of features.
This fact motivates us to use a nonlinear projector based on kernels to design tests.
Compared with the linear projector, computing the corresponding statistic and analyzing its performance is more challenging since the function space cannot be parameterized by finite-dimensional coefficients.
We leverage the kernel trick to finish these two parts.

The remaining of this paper is organized as follows.
Section~\ref{Sec:setup} introduces some preliminary knowledge on two-sample testing and related probability distances,
Section~\ref{Sec:algorithm} outlines a practical algorithm for computing KPW distance,
Section~\ref{Sec:inference} studies the uncertainty quantification of empirical KPW distance,
Section~\ref{Sec:experiment} demonstrates some numerical experiments,
and Section~\ref{Sec:discussion} presents some concluding remarks.

\section{
PROBLEM SETUP
}\label{Sec:setup}
Let $x^n:=\{x_i\}_{i=1}^n$ and $y^m:=\{y_i\}_{i=1}^m$ be i.i.d. samples generated from distributions $\mu$ and $\nu$ supported on $\mathbb{R}^D$, respectively.
Our goal is to design a two-sample test which, given samples $x^n$ and $y^m$, decides to accept the null hypothesis $H_0:~\mu=\nu$ or reject $H_0$ in favor of the alternative hypothesis $H_1:~\mu\ne\nu$.
Denote by $T:~(x^n, y^m)\to\{t_0, t_1\}$ the two-sample test, where $t_0$ means we reject $H_1$ and $t_1$ means we accept $H_1$ and reject $H_0$.
Define the type-I risk as the probability of rejecting hypothesis $H_0$ when it is true, and the type-II risk as the probability of accepting $H_0$ when $\mu\ne\nu$:
\begin{align*}
\epsilon^{(\text{I})}_{n,m}&={\mathbb{P}}_{x^n\sim\mu, y^m\sim\nu}
\bigg(
T(x^n,y^m)=t_1
\bigg),\quad\text{under }H_0,\\
\epsilon^{(\text{II})}_{n,m}&={\mathbb{P}}_{x^n\sim\mu, y^m\sim\nu}
\bigg(
T(x^n,y^m)=t_0
\bigg),\quad\text{under }H_1.
\end{align*}
Given parameters $\alpha,\beta\in(0,\frac{1}{2})$, we aim at building a two-sample test such that, when applied to $n$-observation samples $x^n$ and $m$-observation samples $y^m$, it has the type-I risk at most $\alpha$ (i.e., at level $\alpha$) and the type-II risk at most $\beta$ (i.e., of power $1-\beta$).
Moreover, we want to ensure these specifications with sample sizes $n,m$ as small as possible.

We propose a non-parametric test by considering the probability distance functions between two empirical distributions constructed from observed samples. 
Specifically, we design a test $T$ such that the null hypothesis $H_0$ is rejected when 
\[
\mathcal{D}(\hat{\mu}_n,\hat{\nu}_m)>\chi,
\]
where $\mathcal{D}(\cdot,\cdot)$ is a divergence quantifying the differences of two distributions, $\chi$ is a data-dependent threshold, and $\hat{\mu}_n$ and $\hat{\nu}_m$ are empirical distributions from $n$ samples in $\mu$ and $m$ samples in $\nu$, respectively.
Several existing tests can be unified into this framework by taking $\mathcal{D}(\cdot,\cdot)$ as some special probability distances, including the MMD test, total variation distance test, etc.
In this paper, we will design the divergence $\mathcal{D}$ based on the Wasserstein distance, and we specify the cost function $c(x,y)=\|x-y\|_2^2$.

\begin{definition}[Wasserstein Distance]
Given two distributions ${\mu}$ and ${\nu}$, the Wasserstein distance is defined as
\[
W(\mu,\nu)
=
\min_{\pi\in\Pi(\mu,\nu)}~\int c(x,y)
\diff \pi(x,y),
\]
where $c(\cdot,\cdot)$ denotes the cost function quantifying the distance between two points, and $\Pi(\mu,\nu)$ denotes the joint distribution with marginal distributions $\mu$ and $\nu$.
\end{definition}

Although Wasserstein distance has wide applications in machine learning, the finite-sample convergence rate of Wasserstein distance between empirical distributions is slow in high-dimensional settings~\citep{fournier2013rate}.
Therefore, it is not suitable for high-dimensional two-sample tests.
Instead, existing works use the projection idea to rescue this issue.
\begin{definition}[Projected Wasserstein Distance]
Given two distributions ${\mu}$ and ${\nu}$, define the projected Wasserstein distance as
\[
\mathcal{P}W(\mu, \nu)
=
\max_{\substack{\mathcal{A}:~\mathbb{R}^D\to\mathbb{R}^d, 
A\trans A=I_d
}}
W\left(
\mathcal{A}\#\mu,
\mathcal{A}\#\nu
\right),
\]
where the operator $\#$ denotes the push-forward operator, i.e.,
\[
\mathcal{A}(z)\sim \mathcal{A}\#\mu\quad\text{for }z\sim\mu,
\]
and we denote $\mathcal{A}$ as a linear operator such that $\mathcal{A}(z)=A\trans z$ with $z\in\mathbb{R}^D$ and $A\in\mathbb{R}^{D\times d}$.
\end{definition}
This idea is demonstrated to be useful for breaking the curse of dimensionality for the original Wasserstein distance~\citep{lin2020projection, wang2020twosample}.
However, a linear projector is not an optimal choice for dimensionality reduction.
Instead, we will consider a nonlinear projector to obtain a more powerful two-sample test, and we use functions in vector-valued reproducing kernel Hilbert space~(RKHS) for projection.

\begin{definition}[Vector-valued RKHS]
A function $K:~\mathbb{R}^D\times\mathbb{R}^D\to\mathbb{R}^{d\times d}$ is said to be a positive semi-definite kernel if
\[
\sum_{i=1}^N\sum_{j=1}^N\inp{\bar{y}_i}{K(\bar{x}_i,\bar{x}_j)\bar{y}_j}\ge0
\]
for any finite set of points $\{\bar{x}_i\}_{i=1}^N$ in $\mathbb{R}^D$ and $\{\bar{y}_i\}_{i=1}^N$ in $\mathbb{R}^d$.
Given such a kernel, there exists an unique $\mathbb{R}^d$-valued Hilbert space $\mathcal{H}_K$ with the reproducing kernel $K$.
For fixed $x\in\mathbb{R}^D$ and $y\in\mathbb{R}^d$, define the kernel section $K_x$ with the action $y$ as the mapping $K_xy:~\mathbb{R}^D\to\mathbb{R}^d$ such that
\[
(K_xy)(x') = K(x',x)y,\ \quad \forall x'\in\mathbb{R}^D.
\]
In particular, the Hilbert space $\mathcal{H}_K$ satisfies the reproducing property:
\[
\forall f\in\mathcal{H}_K,\quad
\inp{f}{K_xy}_{\mathcal{H}_K}=\inp{f(x)}{y}.
\]
\end{definition}

\begin{definition}[Kernel Projected Wasserstein Distance]
Consider a $\mathbb{R}^d$-valued RKHS $\mathcal{H}$ with the corresponding kernel function $K$.
Given two distributions ${\mu}$ and ${\nu}$, define the kernel projected Wasserstein~(KPW) distance as
\[
\begin{aligned}
\mathcal{KP}W(\mu, \nu)
&=
\max_{f\in\mathcal{F}}~W\left(
f\#\mu,
f\#\nu
\right)
\end{aligned}
\]
where the function class $\mathcal{F}=\{f\in\mathcal{H}:~\|f\|_{\mathcal{H}}\le 1\}$.
\end{definition}

\begin{remark}\label{Remark:KPW:choose:kernel}
For $d=1$, when the kernel function $K(x,y)=\inp{x}{y}$, the KPW distance reduces into the PW distance.
However, these two distances are not the same for general $d$.
Moreover, existing works \citep{Minh11, Micchelli05, Andrea08, Baldassarre10} consider the design of the matrix-valued kernel function for $d>1$ as
\begin{equation}\label{Eq:M:v:RKHS}
K(x,x')=k(x,x')\cdot P,
\end{equation}
where $k(\cdot,\cdot)$ denotes a scalar-valued kernel function and $P\in\mathbb{R}^{d\times d}$ is a positive semi-definite matrix that encodes the relation between the output space.
Such a design reduces the computational cost for applying vector-valued RKHS.
\end{remark}

In this paper, we design the two-sample test as follows.
We split the data points into training and testing datasets.
We first use the training set to train a nonlinear projector that maps data points into $\mathbb{R}^d$-subspace,
and then perform the permutation test on testing data points that are projected based on the trained projector.
The detailed algorithm is presented in Algorithm~\ref{Alg:permutation:test}.
This test is guaranteed to exactly control the type-I error~\citep{good2013permutation} because we evaluate the $p$-value of the test via the permutation approach.
To obtain reliable two-sample tests, we also require the KPW distance satisfies the discriminative property that $\KPW(\mu,\nu)=0$ if and only if $\mu=\nu$.
The following proposition reveals that this property holds by considering the vector-valued RKHS satisfying the universal property, the proof of which is provided in Appendix~\ref{app:theorem:sec:setup}.
We also study how to compute the kernel projected distance and its related statistical properties in the following sections.

\begin{center}
\begin{algorithm}[!t]
\caption{
Permutation two-sample test using the KPW distance
} 
\begin{algorithmic}[1]\label{Alg:permutation:test}
\REQUIRE
{Level $\alpha$,
number of permutation times $N_p$,
collected samples $x^n$ and $y^m$.
}
\STATE{ 
Split data as $x^n = x^{\Tr}\cup x^{\Te}$ and $y^m = y^{\Tr} \cup y^{\Te}$.
}
\STATE{Formulate empirical distributions $(\hmu^{\Tr}, \hnu^{\Tr})$ corresponding to $(x^{\Tr}, y^{\Tr})$.}
\STATE{
Obtain $f$ as the (approximate) optimal projector to $\KPW(\hmu^{\Tr}, \hnu^{\Tr})$.
}
\STATE{Compute the statistic $T = W(f\#\hmu^{\Te}, f\#\hnu^{\Te})$.}
\FOR{$t = 1, \ldots, N_p$} 
\STATE{Shuffle $x^{\Te}\cup y^{\Te}$ to obtain $x^{\Te}_{(t)}$ and $y^{\Te}_{(t)}$.}
\STATE{Formulate empirical distributions $(\hmu^{\Te}_{(t)}, \hnu^{\Te}_{(t)})$ corresponding to $(x^{\Te}, y^{\Te})$.}
\STATE{Compute the statistic for permuted samples $T_t=W(f\#\hmu^{\Te}_{(t)}, f\#\hnu^{\Te}_{(t)})$.}
\ENDFOR
\\
\textbf{Return} the $p$-value $\frac{1}{N_p}\sum_{t=1}^{N_p}1\{T_t\ge T\}$.
\end{algorithmic}
\end{algorithm}
\end{center}

\begin{proposition}[Discriminative Property of KPW]\label{Proposition:discriminant}
Denote by $\mathcal{C}_b(\mathcal{X})$ the space of bounded and continuous $\mathbb{R}^d$-valued functions on $\mathcal{X}$.
Assume that $\mathcal{H}$ is a universal vector-valued RKHS so that for any $\varepsilon>0$ and $f\in\mathcal{C}_b(\mathcal{X})$, there exists $g\in\mathcal{H}$ so that 
\[
\|f-g\|_{\infty} \triangleq \sup_{x\in\mathcal{X}}\|f(x)-g(x)\|_{2}<\varepsilon.
\]
Then the KPW distance $\mathcal{KP}W(\mu,\nu)=0$ if and only if $\mu=\nu$.
\end{proposition}

\section{
COMPUTING KPW DISTANCE
}
\label{Sec:algorithm}
By the definition of Wasserstein distance, computing $\mathcal{KP}W(\hat{\mu}_n, \hat{\nu}_m)$ is equivalent to the following max-min problem:
\begin{equation}\label{Eq:nominal:KPW}
\max_{f\in\mathcal{H}:~\|f\|_{\mathcal{H}}^2\le 1}~
\left\{
\min_{\pi\in\Gamma}~\sum_{i,j}\pi_{i,j}\|f(x_i) - f(y_j)\|^2_2
\right\},
\end{equation}
where $\Gamma=\left\{\pi\in\mathbb{R}^{n\times m}_+:~ \sum_j\pi_{i,j}=\frac{1}{n}, \sum_i\pi_{i,j}=\frac{1}{m}\right\}$.

The computation of KPW distance has numerous challenges.
It is crucial to design a suitable kernel function to obtain low computational complexity and reliable testing power, which will be discussed in Section~\ref{Sec:experiment}.
Moreover, the function $f\in\mathcal{H}$ is a countable combination of basis functions, i.e., the problem~\eqref{Eq:nominal:KPW} is an infinite-dimensional optimization.
By developing the representer theorem in Theorem~\ref{Theorem:representer}, we are able to convert this problem into a finite-dimensional problem.
Finally, there is no theoretical guarantee for finding the global optimum since it is a non-convex non-smooth optimization problem. Moreover, Sion's minimax theorem is not applicable because the problem \eqref{Eq:nominal:KPW} is not a convex programming: the inner minimization of quadratic function makes the objective in \eqref{Eq:nominal:KPW} not concave in $f$ in general.
Based on this observation, we only focus on optimization algorithms for finding a local optimum point in polynomial time.

\begin{theorem}[Representer Theorem for KPW Distance]\label{Theorem:representer}
There exists an optimal solution to \eqref{Eq:nominal:KPW} that admits the following expression:
\[
\hat{f} = 
\sum_{i=1}^nK_{x_i}a_{x,i}
-
\sum_{j=1}^mK_{y_j}a_{y,j},
\]
where $K_{x}(\cdot)$ denotes the kernel section and $a_{x,i},a_{y,j}\in\mathbb{R}^d$ for $i=1,\ldots,n, j=1,\ldots,m$ are coefficients to be determined.
\end{theorem}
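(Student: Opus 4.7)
The plan is to adapt the classical representer theorem argument to the vector-valued RKHS setting. First I would introduce the finite-dimensional subspace
\[
\mathcal{H}_0 := \mathrm{span}\bigl\{K_{x_i}v,\ K_{y_j}w : v,w\in\mathbb{R}^d,\ 1\le i\le n,\ 1\le j\le m\bigr\}\subset\mathcal{H},
\]
whose dimension is at most $(n+m)d$. For any $f\in\mathcal{H}$, I would then decompose $f=f_0+f_\perp$ with $f_0\in\mathcal{H}_0$ and $f_\perp\in\mathcal{H}_0^{\perp}$ via the orthogonal projection in the Hilbert space $\mathcal{H}$.

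The key step is an invariance argument. By the vector-valued reproducing property, for every $i$ and every $v\in\mathbb{R}^d$,
\[
\inp{f_\perp(x_i)}{v} = \inp{f_\perp}{K_{x_i}v}_{\mathcal{H}} = 0,
\]
since $K_{x_i}v\in\mathcal{H}_0$ while $f_\perp$ is orthogonal to $\mathcal{H}_0$. Because $v$ is arbitrary this forces $f_\perp(x_i)=0$, and the same argument gives $f_\perp(y_j)=0$. Hence $f(x_i)=f_0(x_i)$ and $f(y_j)=f_0(y_j)$ for all $i,j$, which means the inner sum $\sum_{i,j}\pi_{i,j}\|f(x_i)-f(y_j)\|_2$, and therefore its minimum over $\pi\in\Gamma$, depends on $f$ only through $f_0$.

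For feasibility, Pythagoras in $\mathcal{H}$ gives $\|f\|_{\mathcal{H}}^2=\|f_0\|_{\mathcal{H}}^2+\|f_\perp\|_{\mathcal{H}}^2\ge\|f_0\|_{\mathcal{H}}^2$, so whenever $\|f\|_{\mathcal{H}}\le 1$ we also have $\|f_0\|_{\mathcal{H}}\le 1$, with identical objective value. This reduces \eqref{Eq:nominal:KPW} to a maximization over the compact ball $\{f_0\in\mathcal{H}_0:\|f_0\|_{\mathcal{H}}\le 1\}$ inside the finite-dimensional space $\mathcal{H}_0$. The map $f\mapsto\min_{\pi\in\Gamma}\sum_{i,j}\pi_{i,j}\|f(x_i)-f(y_j)\|_2$ is continuous on $\mathcal{H}_0$ (a minimum of a finite family of continuous functions over the compact polytope $\Gamma$), so the maximum is attained. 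Any maximizer $\hat f\in\mathcal{H}_0$ admits a representation $\hat f=\sum_i K_{x_i}a_{x,i}-\sum_j K_{y_j}a_{y,j}$ with $a_{x,i},a_{y,j}\in\mathbb{R}^d$, the sign being purely cosmetic.

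I expect the main subtlety to be the vector-valued twist: in a scalar RKHS the orthogonality identity directly yields $f_\perp(x_i)=0$, but here one must test against every $v\in\mathbb{R}^d$ to conclude that the full vector $f_\perp(x_i)\in\mathbb{R}^d$ vanishes, which is why $\mathcal{H}_0$ must be defined as the span taken over all vector actions $v,w\in\mathbb{R}^d$ rather than a single basis choice. Once that step is secured, no compactness argument in the infinite-dimensional $\mathcal{H}$ is required, since the reduction to $\mathcal{H}_0$ is performed first and attainment of the maximum then follows from standard finite-dimensional compactness.
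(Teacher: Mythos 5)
Your proof is correct and follows essentially the same representer-theorem argument as the paper: orthogonal decomposition of $\mathcal{H}$ into a finite-dimensional subspace spanned by kernel sections at the data points and its complement, the reproducing property to show the objective depends only on the projection, and Pythagoras to show the constraint is preserved. Two minor differences: you work with the larger subspace $\mathcal{H}_0 = \mathrm{span}\{K_{x_i}v, K_{y_j}w\}$ and show $f_\perp(x_i)=f_\perp(y_j)=0$ pointwise, whereas the paper uses the smaller subspace $S$ of differences $(K_{x_i}-K_{y_j})a_{i,j}$ together with the variational identity $\|a\|_2=\max_{\|v\|\le1}\langle a,v\rangle$ to show only $\|\hat f(x_i)-\hat f(y_j)\|_2$ is preserved — both are valid, and both lead to the claimed form. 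You also explicitly justify attainment of the maximum by reducing to a compact ball in finite dimensions before invoking continuity; the paper's proof begins with ``Assume $\hat f$ is an optimal solution,'' implicitly presupposing existence, so your treatment actually closes a small gap.
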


The proof of Theorem~\ref{Theorem:representer} is provided in Appendix \ref{Appendix:proof:algorithm}, in which standard representer theorem in literature~\citep[Theorem~1]{Bernhard01} is not applicable since the RKHS norm serves as a hard constraint instead of the regularization of the objective function.
In order to express the optimal solution as the compact matrix form, define $a_x\in\mathbb{R}^{nd}$ as the concatenation of coefficients $a_{x,i}$ for $i=1,\ldots,n$ and 
\[
K_{z}(x^n)=
\begin{pmatrix}
K(z,x_1)
&
\cdots
&
K(z,x_n)
\end{pmatrix}\in\mathbb{R}^{d\times nd}.
\]
We also define the vector $a_y$ and matrix $K_{z}(y^m)$ likewise.
Then we have
\[
\hat{f}(z) = K_{z}(x^n)a_x - K_{z}(y^m)a_y, \ \quad \forall z\in\mathcal{X}.
\]
Define the gram matrix $K(x^n,x^n)$ as the $n\times n$ block matrix with the $(i,j)$-th block being $K(x_i,x_j)$. The gram matrices $K(x^n,y^m), K(y^m,x^n)$ and $K(y^m,y^m)$ can be defined likewise.
Denote by $G$ the concatenation of gram matrices:
\[
G=\begin{pmatrix}
K(x^n,x^n)   &   -K(x^n,y^m)\\
-K(y^m,x^n)  &   K(y^m,y^m)
\end{pmatrix},
\]
and we assume that $G$ is positive definite. Otherwise, we add the gram matrix with a small number times identity matrix to make it invertible.
Substituting the expression of $\hat{f}(z), z\in\mathcal{X}$ into \eqref{Eq:nominal:KPW}, we obtain a finite-dimensional optimization problem:
\begin{equation*}
\max_{\omega}~\left\{
\min_{\pi\in\Gamma}~
\sum_{i,j}\pi_{i,j}c_{i,j}:~
\omega\trans G\omega\le 1
\right\},
\end{equation*}
where $\omega=[a_x\trans, a_y\trans]\trans\in\mathbb{R}^{d(n+m)}$, $c_{i,j}= \|A_{i,j}\omega\|_2^2$, and 
\begin{align*}
A_{i,j} &= 
[K_{x_i}(x^n)-K_{y_j}(x^n), K_{y_j}(y^m)-K_{x_i}(y^m)].
\end{align*}

Suppose that the inverse of $G$ admits the Cholesky decomposition $G^{-1}=UU\trans$, then by the change of variable technique $s=U^{-1}\omega$, we obtain the norm-constrained optimization problem:
\begin{equation}
\label{Eq:finite}
\max_{s\in\mathbb{R}^{d(n+m)}}~\left\{
\min_{\pi\in\Gamma}~
\sum_{i,j}\pi_{i,j}c_{i,j}:~
s\trans s\le 1
\right\},
\end{equation}
and we can replace the constraint $s\trans s\le 1$ with $s\trans s=1$ based on the fact that the norm function satisfies the linear property.
In other words, the decision variable $s$ belongs to the Euclidean ball $\mathbb{S}^{d(n+m)-1}=\{s\in\mathbb{R}^{d(n+m)}: s\trans s=1\}$.

For the ease of optimization, we consider the entropic regularization of the problem~\eqref{Eq:finite}:
\begin{equation}
\label{Eq:max:min:entropy}
\max_{s\in\mathbb{S}^{d(n+m)-1}}~\left\{\min_{\pi\in\Gamma}
~
\sum_{i,j}\pi_{i,j}
c_{i,j}
-\eta H(\pi)\right\},
\end{equation}
in which we denote the entropy function $H(\pi)=-\sum_{i,j}\pi_{i,j}(\log\pi_{i,j}-1)$.
By the duality theory of entropic optimal transport~\citep{genevay2019entropy} and the change-of-variable technique, \eqref{Eq:max:min:entropy} is equivalent to the following minimization problem:
\begin{equation}\label{Eq:final:min:three:block}
\min_{\substack{s\in\mathbb{S}^{d(n+m)-1}, u\in\mathbb{R}^n, v\in\mathbb{R}^m}}~
F(u,v,s),
\end{equation}
where
\begin{align*}
c_{i,j}&=\|A_{i,j}Us\|_2^2,\\
{\pi_{i,j}}(u,v,s)&=\exp\left(
-\frac{1}{\eta}c_{i,j} + u_i + v_j\right),\\
F(u,v,s)&=\sum_{i,j}{\pi_{i,j}}(u,v,s)-\frac{1}{n}\sum_{i=1}^nu_i - \frac{1}{m}\sum_{j=1}^mv_j.
\end{align*}
The details for this deviation is deferred in Appendix~\ref{Appendix:proof:algorithm}.
Based on this formulation, we consider a Riemannian block coordinate descent~(BCD) method~\citep{HildrethBCD} for optimization, which updates a block of variables by minimizing the objective function with respect to that block while fixing values of other blocks:
\begin{subequations}
\begin{align}
u^{t+1}&=\min_{u\in\mathbb{R}^n}F(u, v^t, s^t),
\label{Eq:update:u}
\\
v^{t+1}&=\min_{v\in\mathbb{R}^m}F(u^{t+1}, v, s^t),
\label{Eq:update:v}\\
\zeta^{t+1}&=\sum_{i,j}\nabla_{s}{\pi_{i,j}}(u^{t+1}, v^{t+1}, s^t),
\label{Eq:update:zeta}
\\
\xi^{t+1}&=\mathcal{P}_{s^t}\big(\zeta^{t+1}\big), \label{Eq:update:xi}\\
s^{t+1}&=\text{Retr}_{s^t}\big(-\tau \xi^{t+1}\big),\label{Eq:update:s}
\end{align}
where the operator $\mathcal{P}_s(\zeta)$ denotes the orthogonal projection of the vector $\zeta$ onto the tangent space of the manifold $\mathbb{S}^{d(n+m)-1}$ at $s$:
\[
\mathcal{P}_{s}\big(\zeta\big) = \zeta - \inp{s}{\zeta}s,\ \quad s\in \mathbb{S}^{d(n+m)-1},
\]
and the retraction on this manifold is defined as
\begin{equation}\label{Eq:retraction:step}
    \text{Retr}_{s}\big(-\tau\xi\big) = \frac{s-\tau\xi}{\|s-\tau\xi\|},\ \quad s\in \mathbb{S}^{d(n+m)-1}.
\end{equation}
Note that the update steps \eqref{Eq:update:u} and \eqref{Eq:update:v} have closed-form expressions:
\begin{align}
u^{t+1}&
= u^t + 
\left\{
\log
\frac{1/n}{\sum_j{\pi_{i,j}}(u^t, v^t, s^t)}
\right\}_{i\in[n]},\label{Eq:update:u:closed}\\
v^{t+1}&=
v^t + 
\left\{
\log
\frac{1/m}{\sum_i{\pi_{i,j}}(u^{t+1}, v^t, s^t)}
\right\}_{j\in[m]},\label{Eq:update:v:closed}
\end{align}
and the Euclidean gradient $\zeta^{t+1}$ in \eqref{Eq:update:zeta} can be computed using the chain rule:
\begin{equation}
\zeta^{t+1}=-\frac{1}{\eta}U\trans\left[ \sum_{i,j}\pi_{i,j}(u^{t+1}, v^{t+1}, s^t)A_{i,j}\trans A_{i,j}\right] Us^t.
\label{Eq:update:zeta:closed}
\end{equation}
\end{subequations}

\begin{center}
\begin{algorithm}[!t]
\caption{
BCD Algorithm for Solving \eqref{Eq:final:min:three:block}} 
\begin{algorithmic}[1]\label{Alg:1}
\REQUIRE
{
Empirical distributions $\hat{\mu}_n$ and $\hat{\nu}_m$.
}
\STATE{Initialize $v^0, s^0$}
\FOR{$t = 0,1,2,\ldots,T-1$} 
\STATE{Update $u^{t+1}$ according to~\eqref{Eq:update:u:closed}}
\STATE{Update $v^{t+1}$ according to~\eqref{Eq:update:v:closed}}
\STATE{Update the Euclidean and Riemannian gradient $\zeta^{t+1}$ and $\xi^{t+1}$, according to \eqref{Eq:update:zeta:closed} and \eqref{Eq:update:xi}, respectively.}
\STATE{Update $s^{t+1}$ according to \eqref{Eq:update:s}}
\ENDFOR
\\
\textbf{Return $u^*=u^T, v^*=v^T, s^*=s^T$.}
\end{algorithmic}
\end{algorithm}
\end{center}

The overall algorithm for solving the problem~\eqref{Eq:final:min:three:block} is summarized in Algorithm~\ref{Alg:1}.
We provide details for efficient implementation of the proposed algorithms in Appendix~\ref{Appendix:Implementation}.
We also give a brief introduction to Riemannian optimization in Appendix~\ref{Appendix:manifold}.
The following theorem gives a convergence analysis of our proposed algorithm.%
The proof of this result is provided in Appendix~\ref{Appendix:proof:algorithm}, which follows similar procedure in \cite{huang2021riemannian}.
The main difference lies in establishing the descent lemma for updating the variable $s$ on sphere instead of Stiefel manifold.
Specifically, the procedure for finding the upper bound on the cost function $c_{i,j}$, the Lipschitz constant for $\pi_{i,j}(u,v,s)$ in $s$, and the Lipschitz constants of the retraction operator \eqref{Eq:retraction:step} will be different.

\begin{theorem}[Convergence Analysis for BCD]\label{Theorem:convergence:BCD}
We say that $(\hat{u}, \hat{v}, \hat{s})$ is a $(\epsilon_1,\epsilon_2)$-stationary point of \eqref{Eq:final:min:three:block} if 
\begin{align*}
    \|\text{Grad}_{s}F(\hat{u}, \hat{v}, \hat{s})\|&\le \epsilon_1,\\
    F(\hat{u}, \hat{v}, \hat{s}) - \min_{u,v}F(u,v,\hat{s})&\le \epsilon_2,
\end{align*}
where $\text{Grad}_{s}F(u,v,s)$ denotes the derivative of $F$ with respect to $s$ on the sphere $\mathbb{S}^{d(n+m)-1}$.
Let $\{u^t,v^t,s^t\}$ be the sequence generated by Algorithm~\ref{Alg:1}, then Algorithm~\ref{Alg:1} returns an $(\epsilon_1,\epsilon_2)$-stationary point in 
\[
T = \mathcal{O}\left(
\log(mn)\cdot
\left[ 
\frac{1}{\epsilon_2^3} + \frac{1}{\epsilon_1^2\epsilon_2}
\right]
\right),
\]
iterations, where the notation $O(\cdot)$ hides constants related to the initial guess $(v^0,s^0)$ and the term $\max_{i,j}\|A_{i,j}U\|$.
\end{theorem}

\begin{remark}[Complexity of Algorithm~\ref{Alg:1}]
Denote $N=n\lor m$\footnote{We denote $a\lor b$ for $\max\{a,b\}$ and $a\land b$ for $\min\{a,b\}$.}.
Note that the iteration \eqref{Eq:update:u:closed} and \eqref{Eq:update:v:closed} can be implemented in $O(N)$ iterations.
Second, the retraction step in \eqref{Eq:update:s} requires $O(dN)$ arithmetic operations.
Third, the computation of the Euclidean vector in \eqref{Eq:update:zeta} can be implemented in $O(d^3N^3)$ operations, and the projection step can be done in $O(dN)$ operations.
Therefore, the number of arithmetic operations in each iteration is of $O(d^3N^3)$.
In summary, Algorithm~\ref{Alg:1} returns an $(\epsilon_1,\epsilon_2)$-stationary point in 
\[
\mathcal{O}\left(
d^3N^3
\log(N)\cdot
\left[ 
\frac{1}{\epsilon_2^3} + \frac{1}{\epsilon_1^2\epsilon_2}
\right]
\right)
\]
arithmetic operations.
Note that this computational complexity is independent of the dimension $D$ of samples since we only need to compute the gram matrix as an input.
The storage cost is of $\mathcal{O}(d^2N^2)$, in which the most expensive step is to store the gram matrix $G$.
\end{remark}

\section{
PERFORMANCE GUARANTEES
}\label{Sec:inference}
In this section, we build statistical properties of the empirical KPW distance, though in practice we may not succeed in finding a global optimum solution to the non-convex optimization problem \eqref{Eq:nominal:KPW}.
We assume the cost function for the Wasserstein distance has the form $c(x,y)=\|x-y\|_2^p$ with $p\in[1,\infty)$.
Moreover, results throughout this section are based on the following assumption.
\begin{assumption}\label{Assumption:1}
For any $x,x'\in\mathcal{X}$, the matrix-valued kernel $K(x,x')$ is symmetric and satisfies
\[
0\preceq K(x,x')\preceq BI_d.
\]
\end{assumption}

\begin{definition}[(Projection) Poincare Inequality]
1. A distribution $\mu$ is said to satisfy a Poincare inequality if there exists an $M>0$ for $X\sim \mu$ so that $\text{Var}[f(X)]\le M\mathbb{E}[\|\nabla f(X)\|^2]$ for any $f$ satisfying $\mathbb{E}[f(X)^2]<\infty$ and $\mathbb{E}[\|\nabla f(X)\|^2]<\infty$.\\
2. A distribution $\mu$ is said to satisfy a projection Poincare inequality if there exists an $M>0$ for any $f\in\mathcal{F}$ and $X\sim f\#\mu$ so that $\text{Var}[f(X)]\le M\mathbb{E}[\|\nabla f(X)\|^2]$ for any $f$ satisfying $\mathbb{E}[f(X)^2]<\infty$ and $\mathbb{E}[\|\nabla f(X)\|^2]<\infty$.
\end{definition}

\begin{remark}\label{Remark:poincare}
The Poincare inequality characterizes the relation about the variance of a function and its derivative in the spirit of the Sobolev inequality.
It is a standard technical assumption for investigating the empirical convergence of Wasserstein distance~\citep{lin2020projection,Lei_2020}, and is satisfied for various exponential measures such as the Gaussian distribution.
See \cite{Ledoux19} for more examples.
\end{remark}

\begin{lemma}\label{Lemma:10}
Assume that the distribution $\mu$ satisfies a projection Poincare inequality.
Then
\begin{align*}
    \mathbb{E}&[\powerP{\KPW(\hmu_n, \mu)}]\lesssim
n^{-\frac{1}{(2p)\lor d}}(\log n)^{\zeta_{p,d}/p}\\
&\qquad\qquad\qquad+n^{-1/(2\lor p)}\sqrt{\log(n)} + n^{-1/p}\log(n),
\end{align*}
where $\zeta_{p,d}=1\{d=2p\}$, and $\lesssim$ refers to "less than" with a constant depending only on $(p,B)$.
\end{lemma}
\begin{lemma}\label{Lemma:variance}
Assume that the distribution $\mu$ satisfies a Poincare inequality, and any $f\in\mathcal{F}$ is $L$-Lipschitz.
Then with probability at least $1-\alpha$, it holds that 
\begin{align*}
&\left|\powerP{\KPW(\hmu_n, \mu)} - \mathbb{E}[ \powerP{\KPW(\hmu_n, \mu)}]\right|\\
&\qquad\le\max\left\{
\varrho\log(1/\alpha), \sqrt{\varrho\log(1/\alpha)}
\right\}n^{-1/(2\lor p)}L^{1/p},
\end{align*}
where $\varrho>0$ is a constant that depends on $M$.
\end{lemma}
\begin{figure*}[t]
    \centering
    \includegraphics[width=\textwidth]{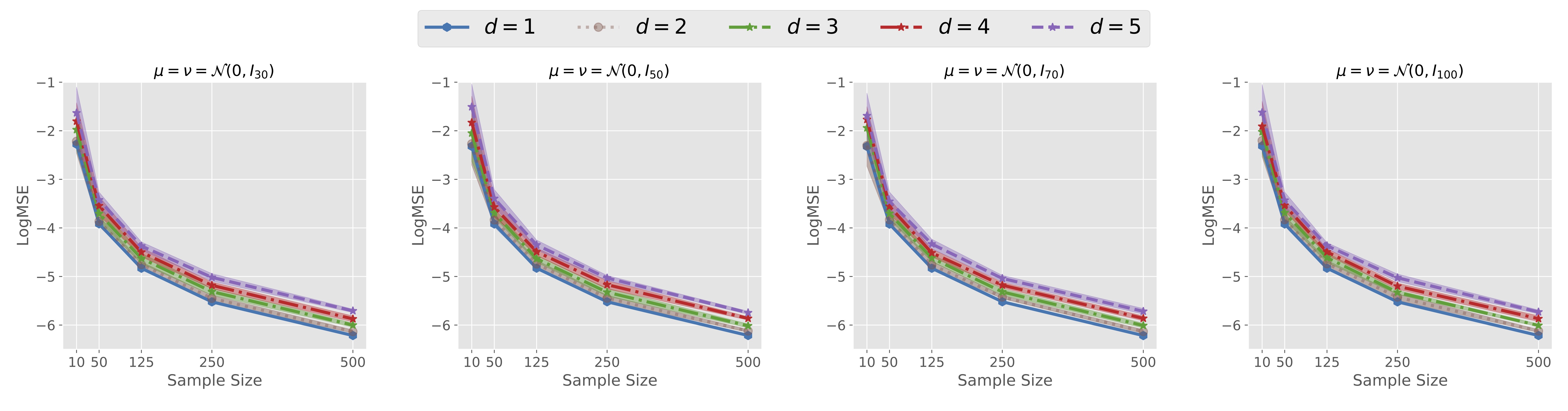}
    \caption{Average values of KPW distances between empirical distributions $\hmu_n$ and $\hnu_n$ as the sample size $n$ varies. Results are averaged for $10$ independent trials and the shaded areas show the corresponding error bars.}
    \label{fig:sample:Gaussian}
\end{figure*}

Proof of two lemmas above follows similar covering number arguments in \cite{lin2020projection}, the details of which are deferred in Appendix~\ref{app:inference:theorem}.
The main difference is that we incorporate the reproducing property of vector-valued RKHS to give a valid bound on the covering number of the RKHS ball $\mathcal{F}$.
Based on these two lemmas and the triangular inequality for Wasserstein distance, we give a finite-sample guarantee for the convergence of the KPW distance in Theorem~\ref{Proposition:finite:KPW:preliminary}.
Compared with the sample complexity of estimating Wasserstein distance, KPW distance does not suffer from the curse of dimensionality as the RKHS ball $\mathcal{F}$ has low complexity.

\begin{theorem}[Finite-sample Guarantee]\label{Proposition:finite:KPW:preliminary}
Suppose the target distributions $\mu=\nu$, which satisfies projection Poincare inequality and Poincare inequality.
Moreover, any $f\in\mathcal{F}$ is $L$-Lipschitz.
Take $N=n\land m$, then with probability at least $1-2\alpha$, it holds that
\begin{align*}
&\powerP{\KPW(\hmu_n, \hnu_m)}\lesssim
N^{-\frac{1}{(2p)\lor d}}(\log N)^{\zeta_{p,d}/p}\\
&\quad+
N^{-1/(2\lor p)}\sqrt{\log(N)} + N^{-1/p}\log(N)\\
&\quad + \max\left\{
\varrho\log(1/\alpha), \sqrt{\varrho\log(1/\alpha)}
\right\}N^{-1/(2\lor p)}L^{1/p}.
\end{align*}

\end{theorem}

\subsection{Performance Guarantees for \texorpdfstring{$p\in[1,2)$}{1<=p<2}}
When showing concentration results for $p$-Wasserstein distance with $p\in[1,2)$, however, it is not necessary to rely on the Poincare inequality assumption.
The main result for this case is summarized in Theorem~\ref{theorem:p12:UQ} (see details in Appendix~\ref{Sec:sub:finite:sample:1:2}).

\begin{theorem}[Finite-sample Guarantee]\label{theorem:p12:UQ}
Suppose the target distributions $\mu=\nu$.
Then with probability at least $1-2\alpha$, it holds that
\begin{align*}
&\powerP{\KPW(\hmu_n, \nu_m)}\lesssim
N^{-\frac{1}{(2p)\lor d}}(\log N)^{\zeta_{p,d}/p}\\
&\qquad+
N^{1/2-1/p}\sqrt{\log(N)}+ N^{-1/p}\\
&\qquad + N^{1/2-1/p}\sqrt{\log\frac{2}{\alpha}}.
\end{align*}
where $N=n\land m$ and $\lesssim$ refers to "less than" with a constant depending only on $(p,B)$.
\end{theorem}

\subsection{Sample Complexity}
We also numerically examine the sample complexity of the empirical KPW distance $\KPW(\hmu_n,\hnu_n)$ with $\mu=\nu=\mathcal{N}(0,I_D)$, where $n\in\{10, 50, 125, 250, 500\}$ and $D\in\{30,50,70,100\}$.
Figure~\ref{fig:sample:Gaussian} reports the average distances and the shaded areas show the corresponding error bars over $10$ independent trials. 
We defer the detailed experiment setup and the plots of the computation time in Appendix~\ref{Appendix:experiment:sample:detail}.
From the plot we can see that the empirical KPW distances decay to zero quickly when the sample size $n$ increases.
Moreover, the distances with smaller values of $d$ have faster decaying rates.
Finally, the convergence behavior of the empirical KPW distances is nearly independent of the choice of $D$, which alleviates the issue of the curse of dimensionality for the original Wasserstein distance.
These facts confirm the finite-sample guarantee discussed in Theorem~\ref{Proposition:finite:KPW:preliminary}.

\begin{figure*}[t]
    \centering
    \includegraphics[width=0.85\textwidth]{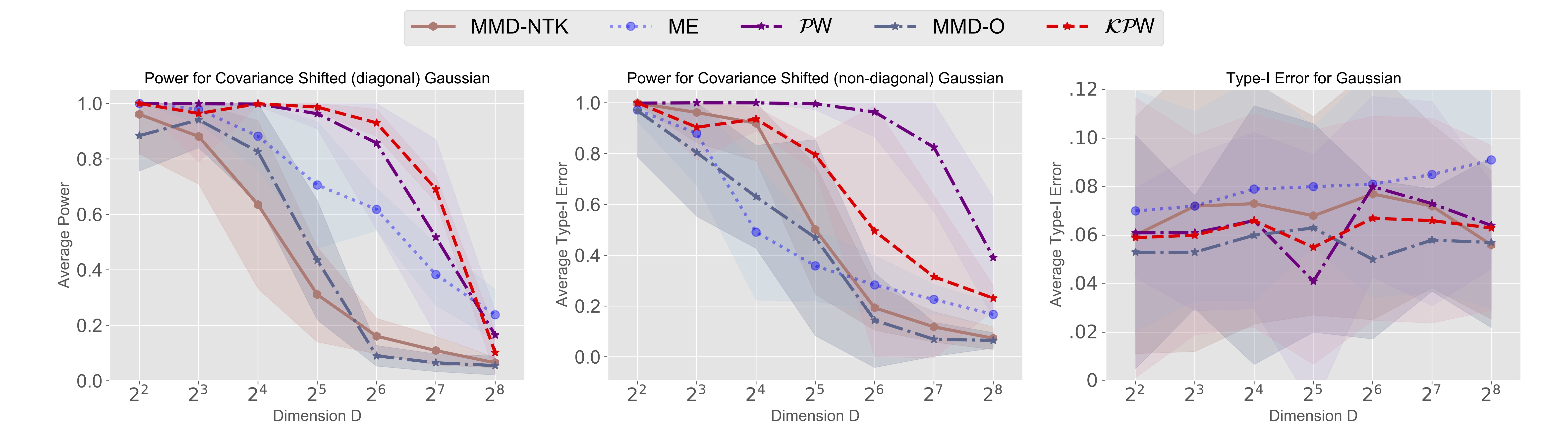}
    \caption{
    Testing results on Gaussian distributions across different choices of dimension $D$.
    Left: power for Gaussian distributions, where the shifted covariance matrix is still diagonal;
    Middle: power for Gaussian distributions, where the shifted covariance matrix is non-diagonal;
    Right: Type-I error.}
    \label{fig:label:Gaussian}
\end{figure*}

\begin{table*}[t]
\centering
  \caption{
Average test power and standard error about detecting distribution abundance change in \emph{MNIST} dataset across different choices of sample size.
   } \label{tab:MNIST_RES1}
\vspace{1mm}
\begin{tabular}{c|>{\centering}p{0.15\textwidth}>{\centering}p{0.15\textwidth}>{\centering}p{0.15\textwidth}>{\centering}p{0.15\textwidth}>{\centering\arraybackslash}p{0.15\textwidth}}
\toprule
$N$ & MMD-NTK & MMD-O & ME & PW & KPW\\
\midrule
\phantom{1}$\,$200 & \mnstd{0.639}{0.029} & \mnstd{\bf 0.696}{0.006} & \mnstd{0.298}{0.031} & \mnstd{0.302}{0.033} & \mnstd{0.663}{0.015} \\
\phantom{2}$\,$250 & \mnstd{0.763}{0.010} & \mnstd{0.781}{0.002} & \mnstd{0.472}{0.017} & \mnstd{0.369}{0.030} & \mnstd{\bf 0.785}{0.014} \\
\phantom{1}$\,$300 & \mnstd{0.813}{0.016} & \mnstd{0.869}{0.002} & \mnstd{0.630}{0.025} & \mnstd{0.524}{0.023} & \mnstd{\bf 0.928}{0.001} \\
\phantom{1}$\,$400 & \mnstd{0.881}{0.013} & \mnstd{0.956}{0.003} & \mnstd{0.779}{0.020} & \mnstd{0.591}{0.044} & \mnstd{\bf 0.978}{0.000} \\
\phantom{1}$\,$500 & \mnstd{0.950}{0.002} & \mnstd{0.988}{0.000} & \mnstd{0.927}{0.006} & \mnstd{0.782}{0.040} & \mnstd{\bf 1.000}{0.000}  \\
\midrule
Avg. & 0.809 & 0.858 & 0.621 & 0.513 & {\bf 0.870} \\
\bottomrule
\end{tabular}
\vspace{-1em}
\end{table*}
\section{
NUMERICAL EXPERIMENTS
}\label{Sec:experiment}
Throughout this section, we compare the performance of tests with the following procedures. (i) PW: the projected Wasserstein test where the projector is a linear mapping~\citep{wang2020twosample}; (ii) MMD-O: the MMD test with a Gaussian kernel whose bandwidth is optimized~\citep{liu2020learning}; (iii) MMD-NTK: the test that combines both neural networks and MMD~\citep{cheng2021neural}; and (iv) ME: the mean embedding test with optimized hyper-parameters~\citep{JitkrittumME}.
Implementation details on those baseline methods are omitted in Appendix~\ref{Appendix:config:baseline}.
When dealing with synthetic datasets, we generate a single sample set as the training set to learn parameters for each method.
Then we evaluate the power of tests on $100$ new sample sets generated from the same distribution.
When dealing with real datasets, we randomly take part of samples as the training set, and evaluate the power on $100$ randomly chosen subsets from the remaining samples.
The number of permutations in Algorithm~\ref{Alg:permutation:test} is set to be $N_p=100$.
We control the type-I error for all tests at $\alpha=0.05$.

When using the KPW distance, we follow \eqref{Eq:M:v:RKHS} to design kernels to decrease the computational complexity. 
More specifically, we choose the scalar-valued kernel $k(\cdot,\cdot)$ to be a standard Gaussian kernel with the bandwidth $\sigma^2$, and 
\[
P=(1-\rho)\bm 1\bm 1\trans + \rho I_d,\quad 
\text{with }\rho\in[0,1].
\]
We use the cross-validation approach to select the hyper-parameters $\rho$ and $\sigma^2$, the details of which are deferred in Appendix~\ref{Appendix:cross:validation}.
The dimension $d$ is pre-specified and fixed into $3$ in all experiments.
We also present a study on the impact of hyper-parameters such as the projected dimension $d$ and regularization parameter $\eta$ in Appendix~\ref{Appendix:impact:hyper:parameter}.

\subsection{Tests for Synthetic Datasets}\label{Sec:Synetic}
We first investigate the performance when $\mu$ and $\nu$ are Gaussian distributions with diagonal covariance matrices.
Specifically, we take $\mu=\mathcal{N}(0,I_D)$ and $\nu=\mathcal{N}(0,\Sigma)$ is the covariance shifted Gaussian, where the matrix $\Sigma=\text{diag}(4,4,4,1,\ldots,1)$.
In other words, we only scale the first three entries of the covariance matrix to make the high-dimensional testing problem challenging to handle.
Fig.~\ref{fig:label:Gaussian} reports the type-I and type-II errors for various tests across different choices of dimension $D$.
We observe that both PW and KPW tests perform the best, while the power for other benchmark methods degrades quickly when the dimension $D$ increases.

Next, we examine the case where $\nu$ has a non-diagonal covariance matrix.
We take $\mu=\mathcal{N}(0,I_D)$ and $\nu=\mathcal{N}(0,V\Sigma V\trans)$, where 
$V$ is an orthogonal matrix with $V_{i,j}=\sqrt{2/(D+1)}\sin(ij\pi/(D+1))$ and $\Sigma=\diag(5,5,5,1,\ldots,1)$.
Testing results for various choices of dimension $D$ is reported in the middle of Fig.~\ref{fig:label:Gaussian}.
In this case, the PW test performs slightly better than the KPW test. 
One possible explanation is that linear mapping seems to be the optimal choice for two-sample testing with covariance shifted Gaussian distributions.
It is promising to design other types of matrix-valued kernel functions to improve performances of the KPW test.

\begin{figure*}[t]
\centering
\includegraphics[width=0.95\textwidth]{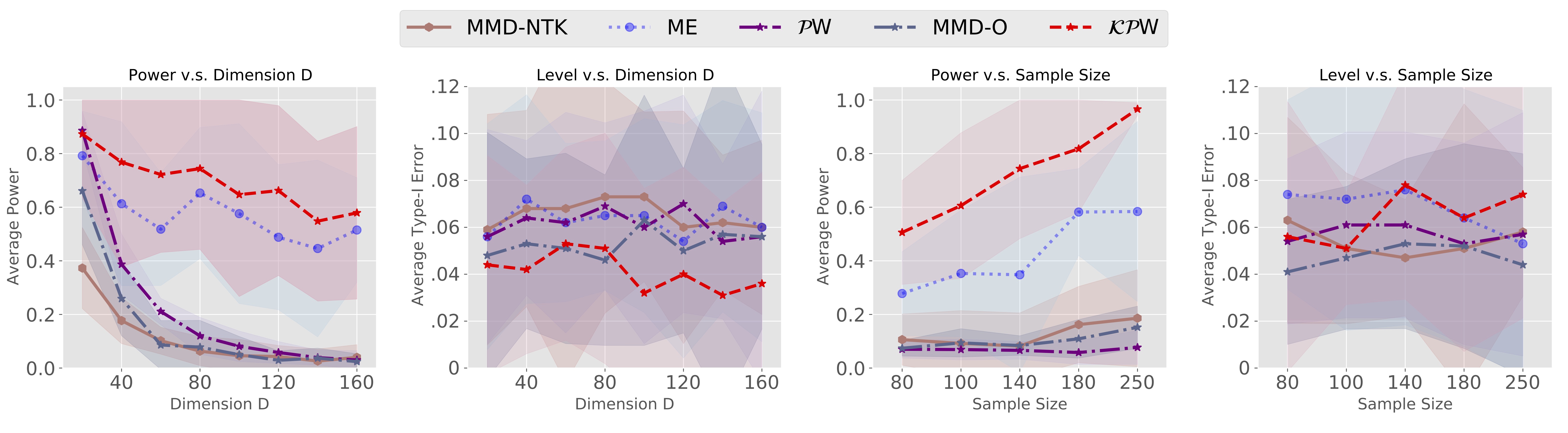}
\caption{
Testing results on Gaussian-mixture distributions.
Left two: type-I and type-II errors across different choices of dimension $D$ with fixed sample size $n=m=200$;
Right two: type-I and type-II errors across different choices of sample size $n=m$ with fixed dimension $D=140$.
}
\label{fig:4}
\end{figure*}

Finally, we study the case where sample points are generated from high-dimensional Gaussian mixture distributions.
We take $\mu=\frac{1}{2}\mathcal{N}(0,I_D) + \frac{1}{2}\mathcal{N}(\Delta_2,I_D)$ with $\Delta_2=(1,1,\ldots,1)$ and $\nu = \frac{1}{2}\mathcal{N}(0,\Sigma_1) + \frac{1}{2}\mathcal{N}(\Delta_3,\Sigma_2)$ with $\Delta_3=(1+0.8/\sqrt{D},\ldots,1+0.8/\sqrt{D})$.
Covariance matrix $\Sigma_1$ is defined with $\Sigma_1[1,1]=\Sigma_1[2,2]=4, \Sigma_1[1,2]=\Sigma_1[2,1]=-0.9, \Sigma_1[i,i]=1, 3\le i\le D$, and $\Sigma_1[i,j]=0$ for indexes elsewhere.
Covariance matrix $\Sigma_2$ is defined with $\Sigma_2[1,2]=\Sigma_2[2,1]=0.9$, $\Sigma_2[i,i]=1, 1\le i\le D$, and $\Sigma_2[i,j]=0$ for indexes elsewhere.
Testing results (type-I and type-II errors) across different choices of dimension $D$ for fixed sample size $n=m=200$ is presented in the left two plots in Fig.~\ref{fig:4}.
We also report results for increasing sample sizes $n=m$ by fixing the dimension $D=140$ in the right two plots in Fig.~\ref{fig:4}.
From the plot, we can see that all approaches have expected type-I error rates.
Moreover, the tests based on PW and KPW distances outperform other benchmark methods, which indicates that the idea of dimension reduction is helpful for high-dimensional testing.
The KPW test generally has the highest power in this case, since the nonlinear projector in the unit ball of RKHS is flexible enough to capture the differences between distributions.
Other experiment details of this subsection is omitted in Appendix~\ref{Appendix:test:sync:dataset}.

\subsection{Tests for MNIST handwritten digits}
We now perform two-sample tests on the MNIST dataset~\citep{lecun10}.
Let $p$ be the distribution uniformly generated from the dataset, and $q=0.85 p + 0.15 p_{\text{cohort}}$, where $p_{\text{cohort}}$ is the distribution from a class with digit $1$.
Both training and testing sample sizes are set to be $N\in\{200,250,\ldots,500\}$.
Before performing two-sample tests, we pre-process this dataset by taking the sigmoid transformation of each image such that all scaled pixels are within the interval $[0,1]$.
Table~\ref{tab:MNIST_RES1} presents the testing power of various tests across different choices of $N$, from which we can see that the KPW test is competitive compared with other methods.
We observe that performances of MMD-O in MNIST dataset are significantly better than that in synthetic datasets provided in Section~\ref{Sec:Synetic}.
One possible explanation is that isotropic kernel functions will limit the power of MMD tests in some numerical examples~\citep[Section~3]{liu2020learning}.
Average type-I error for various tests is presented in Table~\ref{tab:MNIST_RES1:type:I} in Appendix~\ref{Appendix:test:mnist:detail}, from which we can see all tests have the type-I error close to $\alpha=0.05$.

\begin{table}[t!]
  \centering
  \footnotesize
  \caption{
  Delay time for detecting the transition in \emph{MSRC-12} that corresponds to four users.
  }
  \vspace{1mm}
    \begin{tabular}{cccccc}
    \toprule
  User &  MMD-NTK & MMD-O & ME & PW & KPW \\
    \midrule
    1 & 36 & 73 & 82 & 47 & {\bf 33} \\
    2 & 8 & 7 & 97 & 9 & {\bf 1} \\
    3 & 15 & 13 & 27 & {\bf 2} & 20 \\
    4 & 22 & 83 & 69 & 16 & {\bf 12} \\\midrule
    \text{Mean} & 20.25 & 44.0 & 68.8 & 18.50 & {\bf 16.5}\\
    \text{Std} & {\bf 12.0} & 39.5 & 30.1 & 19.8 & 13.5\\
    \bottomrule
    \end{tabular}%
  \label{tab:MSRC:full}%
  \vspace{-1em}
\end{table}%

\subsection{Human activity detection}
Finally, we apply the KPW test to perform online change-point detection for human activity transition.
We use a real-world dataset called the Microsoft Research Cambridge-12~(MSRC-12) Kinect gesture dataset~\citep{Fothergill12}. 
After pre-processing, this dataset consists of actions from four people, each with $855$ samples in $\mathbb{R}^{60}$, and with a change of action from \emph{bending} to \emph{throwing} at the time index $500$.
More experimental details are omitted in Appendix~\ref{Appendix:Human:data}.
Fix the window size $W=100$. We pre-train a nonlinear projector using the data (sample size as the window) before time index $300$ and compute the null statistics for many times to obtain the true threshold such that the false alarm rate is controlled within $\alpha=0.05$.
Then we perform online change-point detection based on a sliding window that moves forward with time.
We compute the detection statistic by comparing the distribution between the block of data before time $300$ and the data from the sliding window.
We reject the null hypothesis and claim a change is happened if the statistic is above the threshold.
Table~\ref{tab:MSRC:full} reports the delay time for detecting the behavior transition, from which we observe that the KPW test detects the change in the shortest time.

\section{
CONCLUSION
}\label{Sec:discussion}
We proposed the KPW distance for the task of two-sample testing, which operates by finding the nonlinear mapping in the data space to maximize the distance between projected distributions.
Practical algorithms together with uncertainty quantification of empirical estimates are discussed to help with this task.

The extension of this work is as follows.
First, it is promising to consider milder technical assumptions than the projected Poincare inequality when establishing performance guarantees.
Second, a meaningful research question is to determine the optimal hyper-parameters for the KPW test, including the projected subspace dimension $d$ and the matrix-valued kernel function $K$.
Third, it is desirable to study how to systematically pick the regularization parameter $\eta$ to balance the trade-off between computational efficiency and accuracy of the obtained solution.

\section*{Acknowledgements}
This work is supported by NSF DMS-2134037, CCF-1650913, CMMI-2015787, DMS-1938106, and DMS-1830210.
The authors would like to thank the
Editor and the anonymous referees for the thoughtful
comments and suggestions, which led to an improvement of the presentation.

\bibliography{sample}

\begin{thebibliography}{}

\bibitem[Ahmed et~al., 2016]{ahmed2016survey}
Ahmed, M., Mahmood, A.~N., and Hu, J. (2016).
\newblock A survey of network anomaly detection techniques.
\newblock {\em Journal of Network and Computer Applications}, 60:19--31.

\bibitem[Baldassarre et~al., 2010]{Baldassarre10}
Baldassarre, L., Rosasco, L., Barla, A., and Verri, A. (2010).
\newblock Vector field learning via spectral filtering.
\newblock In {\em Machine Learning and Knowledge Discovery in Databases}, pages
  56--71, Berlin, Heidelberg. Springer Berlin Heidelberg.

\bibitem[Binkowski et~al., 2018]{bikowski2018demystifying}
Binkowski, M., Sutherland, D.~J., Arbel, M., and Gretton, A. (2018).
\newblock Demystifying {MMD} {GAN}s.
\newblock In {\em International Conference on Learning Representations}.

\bibitem[Boumal et~al., 2018]{Boumal_2018}
Boumal, N., Absil, P.-A., and Cartis, C. (2018).
\newblock Global rates of convergence for nonconvex optimization on manifolds.
\newblock {\em IMA Journal of Numerical Analysis}, 39(1):1--33.

\bibitem[Boyd and Vandenberghe, 2004]{boyd2004convex}
Boyd, S. and Vandenberghe, L. (2004).
\newblock {\em Convex optimization}.
\newblock Cambridge university press.

\bibitem[Brouard et~al., 2011]{brouard2011semi}
Brouard, C., d'Alch{\'e} Buc, F., and Szafranski, M. (2011).
\newblock Semi-supervised penalized output kernel regression for link
  prediction.
\newblock In {\em 28th International Conference on Machine Learning (ICML
  2011)}, pages 593--600.

\bibitem[Caponnetto et~al., 2008]{Andrea08}
Caponnetto, A., Micchelli, C.~A., Pontil, M., and Ying, Y. (2008).
\newblock Universal multi-task kernels.
\newblock {\em Journal of Machine Learning Research}, 9(52):1615--1646.

\bibitem[Chandola et~al., 2009]{Chandola2009}
Chandola, V., Banerjee, A., and Kumar, V. (2009).
\newblock Anomaly detection: A survey.
\newblock {\em ACM Computing Surveys}, 41(3).

\bibitem[Cheng and Xie, 2021a]{cheng2021kernelmanifold}
Cheng, X. and Xie, Y. (2021a).
\newblock Kernel mmd two-sample tests for manifold data.
\newblock {\em arXiv preprint arXiv:2105.03425}.

\bibitem[Cheng and Xie, 2021b]{cheng2021neural}
Cheng, X. and Xie, Y. (2021b).
\newblock Neural tangent kernel maximum mean discrepancy.
\newblock In {\em Advances in Neural Information Processing Systems},
  volume~34.

\bibitem[Chwialkowski et~al., 2016]{chwialkowski2016kernel}
Chwialkowski, K., Strathmann, H., and Gretton, A. (2016).
\newblock A kernel test of goodness of fit.
\newblock In {\em Proceedings of the 33rd International Conference on Machine
  Learning}, volume~48, pages 2606--2615.

\bibitem[Cover and Thomas, 2006]{Cover06}
Cover, T.~M. and Thomas, J.~A. (2006).
\newblock {\em Elements of Information Theory}.
\newblock Wiley-Interscience.

\bibitem[Cudeck, 2000]{cudeck2000exploratory}
Cudeck, R. (2000).
\newblock Exploratory factor analysis.
\newblock In {\em Handbook of applied multivariate statistics and mathematical
  modeling}, pages 265--296. Elsevier.

\bibitem[del Barrio et~al., 1999]{delbarrio1999}
del Barrio, E., Cuesta-Albertos, J.~A., Matrán, C., and Rodriguez-Rodriguez,
  J.~M. (1999).
\newblock Tests of goodness of fit based on the $l_2$-wasserstein distance.
\newblock {\em Annals of Statistics}, 27(4):1230--1239.

\bibitem[Edelman et~al., 1998]{edelman1998geometry}
Edelman, A., Arias, T.~A., and Smith, S.~T. (1998).
\newblock The geometry of algorithms with orthogonality constraints.
\newblock {\em SIAM journal on Matrix Analysis and Applications},
  20(2):303--353.

\bibitem[Fothergill et~al., 2012]{Fothergill12}
Fothergill, S., Mentis, H., Kohli, P., and Nowozin, S. (2012).
\newblock Instructing people for training gestural interactive systems.
\newblock In {\em Proceedings of the SIGCHI Conference on Human Factors in
  Computing Systems}, page 1737–1746. Association for Computing Machinery.

\bibitem[Fournier and Guillin, 2015]{fournier2013rate}
Fournier, N. and Guillin, A. (2015).
\newblock On the rate of convergence in wasserstein distance of the empirical
  measure.
\newblock {\em Probability Theory and Related Fields}, 162(3):707--738.

\bibitem[Genevay, 2019]{genevay2019entropy}
Genevay, A. (2019).
\newblock {\em Entropy-regularized optimal transport for machine learning}.
\newblock PhD thesis, Paris Sciences et Lettres (ComUE).

\bibitem[Genevay et~al., 2019]{genevay2019sample}
Genevay, A., Chizat, L., Bach, F., Cuturi, M., and Peyr\'{e}, G. (2019).
\newblock Sample complexity of sinkhorn divergences.
\newblock In {\em Proceedings of the Twenty-Second International Conference on
  Artificial Intelligence and Statistics}, volume~89, pages 1574--1583.

\bibitem[Genevay et~al., 2018]{genevay2018learning}
Genevay, A., Peyr{\'e}, G., and Cuturi, M. (2018).
\newblock Learning generative models with sinkhorn divergences.
\newblock In {\em Proceedings of the Twenty-Second International Conference on
  Artificial Intelligence and Statistics}, volume~84, pages 1608--1617.

\bibitem[Gin and Nickl, 2015]{Gin2015}
Gin, E. and Nickl, R. (2015).
\newblock {\em Mathematical Foundations of Infinite-Dimensional Statistical
  Models}.
\newblock Cambridge University Press, USA.

\bibitem[Good, 2013]{good2013permutation}
Good, P. (2013).
\newblock {\em Permutation tests: a practical guide to resampling methods for
  testing hypotheses}.
\newblock Springer Science $\&$ Business Media.

\bibitem[Gretton et~al., 2012]{Gretton12}
Gretton, A., Borgwardt, K.~M., Rasch, M.~J., Sch\"{o}lkopf, B., and Smola, A.
  (2012).
\newblock A kernel two-sample test.
\newblock {\em Journal of Machine Learning Research}, 13:723--773.

\bibitem[Gretton et~al., 2009]{Gretton09}
Gretton, A., Fukumizu, K., Harchaoui, Z., and Sriperumbudur, B.~K. (2009).
\newblock A fast, consistent kernel two-sample test.
\newblock In {\em Advances in Neural Information Processing Systems},
  volume~22, pages 673--681.

\bibitem[Gy{\"o}rfi and Van Der~Meulen, 1991]{Ga1991}
Gy{\"o}rfi, L. and Van Der~Meulen, E.~C. (1991).
\newblock {\em A Consistent Goodness of Fit Test Based on the Total Variation
  Distance}, pages 631--645.
\newblock Springer Netherlands, Dordrecht.

\bibitem[Hildreth, 1957]{HildrethBCD}
Hildreth, C. (1957).
\newblock A quadratic programming procedure.
\newblock {\em Naval Research Logistics Quarterly}, 4(1):79--85.

\bibitem[Hotelling, 1931]{hotelling1931}
Hotelling, H. (1931).
\newblock The generalization of student's ratio.
\newblock {\em Annals of Mathematical Statistics}, 2(3):360--378.

\bibitem[HQuang et~al., 2013]{HQuang13}
HQuang, M., Bazzani, L., and Murino, V. (2013).
\newblock A unifying framework for vector-valued manifold regularization and
  multi-view learning.
\newblock In {\em Proceedings of the 30th International Conference on Machine
  Learning}, volume~28 of {\em Proceedings of Machine Learning Research}, pages
  100--108.

\bibitem[Hu et~al., 2019]{hu2019brief}
Hu, J., Liu, X., Wen, Z., and Yuan, Y. (2019).
\newblock A brief introduction to manifold optimization.
\newblock {\em arXiv preprint arXiv:1906.05450}.

\bibitem[Huang et~al., 2021]{huang2021riemannian}
Huang, M., Ma, S., and Lai, L. (2021).
\newblock A riemannian block coordinate descent method for computing the
  projection robust wasserstein distance.
\newblock {\em arXiv preprint arXiv:2012.05199}.

\bibitem[Jiang et~al., 2017]{Jiang2017VectorTS}
Jiang, B., Ma, S., So, A. M.-C., and Zhang, S. (2017).
\newblock Vector transport-free svrg with general retraction for riemannian
  optimization: Complexity analysis and practical implementation.
\newblock {\em arXiv preprint arXiv:1705.09059}.

\bibitem[Jitkrittum et~al., 2016]{JitkrittumME}
Jitkrittum, W., Szab\'{o}, Z., Chwialkowski, K., and Gretton, A. (2016).
\newblock Interpretable distribution features with maximum testing power.
\newblock In {\em Proceedings of the 30th International Conference on Neural
  Information Processing Systems}, NIPS'16, page 181–189.

\bibitem[Jolliffe, 1986]{Jolliffe1986}
Jolliffe, I. (1986).
\newblock {\em Principal Component Analysis}.
\newblock Springer Verlag.

\bibitem[Jr., 1951]{Frank51}
Jr., F. J.~M. (1951).
\newblock The kolmogorov-smirnov test for goodness of fit.
\newblock {\em Journal of the American Statistical Association},
  46(253):68--78.

\bibitem[Kadri et~al., 2013]{kadri2013functional}
Kadri, H., Rabaoui, A., Preux, P., Duflos, E., and Rakotomamonjy, A. (2013).
\newblock Functional regularized least squares classi cation with
  operator-valued kernels.
\newblock {\em arXiv preprint arXiv:1301.2655}.

\bibitem[LeCun and Cortes, 2010]{lecun10}
LeCun, Y. and Cortes, C. (2010).
\newblock {MNIST} handwritten digit database.

\bibitem[Ledoux, 1999]{Ledoux19}
Ledoux, M. (1999).
\newblock Concentration of measure and logarithmic sobolev inequalities.
\newblock {\em S\'eminaire de probabilit\'es de Strasbourg}, 33:120--216.

\bibitem[Lei, 2020]{Lei_2020}
Lei, J. (2020).
\newblock Convergence and concentration of empirical measures under wasserstein
  distance in unbounded functional spaces.
\newblock {\em Bernoulli}, 26(1).

\bibitem[Lin et~al., 2020]{lin2020projection2}
Lin, T., Fan, C., Ho, N., Cuturi, M., and Jordan, M. (2020).
\newblock Projection robust wasserstein distance and riemannian optimization.
\newblock In {\em Advances in Neural Information Processing Systems},
  volume~33, pages 9383--9397.

\bibitem[Lin et~al., 2021]{lin2020projection}
Lin, T., Zheng, Z., Chen, E., Cuturi, M., and Jordan, M. (2021).
\newblock On projection robust optimal transport: Sample complexity and model
  misspecification.
\newblock In {\em Proceedings of The 24th International Conference on
  Artificial Intelligence and Statistics}, volume 130, pages 262--270.

\bibitem[Liu et~al., 2020]{liu2020learning}
Liu, F., Xu, W., Lu, J., Zhang, G., Gretton, A., and Sutherland, D.~J. (2020).
\newblock Learning deep kernels for non-parametric two-sample tests.
\newblock In {\em Proceedings of the 37th International Conference on Machine
  Learning}, volume 119, pages 6316--6326.

\bibitem[Lloyd and Ghahramani, 2015]{Lloyd15}
Lloyd, J.~R. and Ghahramani, Z. (2015).
\newblock Statistical model criticism using kernel two sample tests.
\newblock In {\em Advances in Neural Information Processing Systems}, pages
  829--837.

\bibitem[Lopez-Paz and Oquab, 2018]{lopezpaz2018revisiting}
Lopez-Paz, D. and Oquab, M. (2018).
\newblock Revisiting classifier two-sample tests.
\newblock In {\em International Conference on Learning Representations}.

\bibitem[McDiarmid, 1989]{mcdiarmid_1989}
McDiarmid, C. (1989).
\newblock {\em On the method of bounded differences}, pages 148--188.
\newblock London Mathematical Society Lecture Note Series. Cambridge University
  Press.

\bibitem[Micchelli and Pontil, 2005]{Micchelli05}
Micchelli, C.~A. and Pontil, M.~A. (2005).
\newblock On learning vector-valued functions.
\newblock {\em Neural Computation}, 17(1):177–204.

\bibitem[Minh and Sindhwani, 2011]{Minh11}
Minh, H.~Q. and Sindhwani, V. (2011).
\newblock Vector-valued manifold regularization.
\newblock In {\em Proceedings of the 28th International Conference on
  International Conference on Machine Learning}, page 57–64.

\bibitem[Mueller and Jaakkola, 2015]{mueller2015principal}
Mueller, J. and Jaakkola, T. (2015).
\newblock Principal differences analysis: Interpretable characterization of
  differences between distributions.
\newblock In {\em Advances in Neural Information Processing Systems},
  volume~28.

\bibitem[Pfanzagl and Sheynin, 1996]{PFANZAGL96}
Pfanzagl, J. and Sheynin, O. (1996).
\newblock Studies in the history of probability and statistics xliv a
  forerunner of the t-distribution.
\newblock {\em Biometrika}, 83(4):891--898.

\bibitem[Poor and Hadjiliadis, 2008]{Vincent08}
Poor, H. and Hadjiliadis, O. (2008).
\newblock {\em Quickest detection}.
\newblock Cambridge University Press.

\bibitem[Pratt and Gibbons, 1981]{Pratt1981}
Pratt, J.~W. and Gibbons, J.~D. (1981).
\newblock {\em Kolmogorov-Smirnov Two-Sample Tests}, pages 318--344.
\newblock Springer New York, New York, NY.

\bibitem[Ramdas et~al., 2017]{ramdas2015wasserstein}
Ramdas, A., Garcia, N., and Cuturi, M. (2017).
\newblock On wasserstein two-sample testing and related families of
  nonparametric tests.
\newblock {\em Entropy}, 19(2).

\bibitem[Reddi et~al., 2015]{reddi2014decreasing}
Reddi, S.~J., Ramdas, A., PAczos, B., Singh, A., and Wasserman, L. (2015).
\newblock On the decreasing power of kernel and distance based nonparametric
  hypothesis tests in high dimensions.
\newblock In {\em Proceedings of the 29th AAAI Conference on Artificial
  Intelligence}, page 3571–3577.

\bibitem[Rockafellar, 1970]{Rockafellar70}
Rockafellar, R.~T. (1970).
\newblock {\em Convex analysis}.
\newblock Princeton Mathematical Series. Princeton University Press.

\bibitem[Savage et~al., 2014]{savage2014anomaly}
Savage, D., Zhang, X., Yu, X., Chou, P., and Wang, Q. (2014).
\newblock Anomaly detection in online social networks.
\newblock {\em Social networks}, 39:62--70.

\bibitem[Schober and Vetter, 2019]{Schober19}
Schober, P. and Vetter, T. (2019).
\newblock Two-sample unpaired t tests in medical research.
\newblock {\em Anesthesia and analgesia}, 129:911.

\bibitem[Sch{\"o}lkopf et~al., 2001]{Bernhard01}
Sch{\"o}lkopf, B., Herbrich, R., and Smola, A.~J. (2001).
\newblock A generalized representer theorem.
\newblock In Helmbold, D. and Williamson, B., editors, {\em Computational
  Learning Theory}, pages 416--426.

\bibitem[Sch{\"o}lkopf et~al., 1998]{scholkopf1998nonlinear}
Sch{\"o}lkopf, B., Smola, A., and M{\"u}ller, K.-R. (1998).
\newblock Nonlinear component analysis as a kernel eigenvalue problem.
\newblock {\em Neural computation}, 10(5):1299--1319.

\bibitem[Wainwright, 2019]{wainwright2019high}
Wainwright, M.~J. (2019).
\newblock {\em High-dimensional statistics: A non-asymptotic viewpoint},
  volume~48.
\newblock Cambridge University Press.

\bibitem[Wang et~al., 2021]{wang2020twosample}
Wang, J., Gao, R., and Xie, Y. (2021).
\newblock Two-sample test using projected wasserstein distance.
\newblock In {\em Proceedings of IEEE International Symposium on Information
  Theory}.

\bibitem[Wen and Yin, 2012]{Wen13}
Wen, Z. and Yin, W. (2012).
\newblock A feasible method for optimization with orthogonality constraints.
\newblock {\em Mathematical Programming}, 142(1):397--434.

\bibitem[Xie and Xie, 2021]{xie2020sequential}
Xie, L. and Xie, Y. (2021).
\newblock Sequential change detection by optimal weighted $\ell_2$ divergence.
\newblock {\em IEEE Journal on Selected Areas in Information Theory}, pages
  1--1.

\bibitem[Xie et~al., 2021]{Liyanchange_21}
Xie, L., Zou, S., Xie, Y., and Veeravalli, V.~V. (2021).
\newblock Sequential (quickest) change detection: Classical results and new
  directions.
\newblock {\em IEEE Journal on Selected Areas in Information Theory}, 2(2).

\bibitem[Young et~al., 2005]{young2005essentials}
Young, G.~A., Severini, T.~A., Young, G.~A., Smith, R., Smith, R.~L., et~al.
  (2005).
\newblock {\em Essentials of statistical inference}, volume~16.
\newblock Cambridge University Press.

\end{thebibliography}
\bibliographystyle{apalike}
\newpage

\newpage

\if\submitversion1
\appendix
\thispagestyle{empty}
\onecolumn
\makesupplementtitle
\fi
\if\submitversion2
\appendices
\onecolumn
\fi

\section{
PRELIMINARY TECHNICAL RESULTS
}
\begin{theorem}[Pinsker's Inequality~\citep{Cover06}]\label{Theorem:Pinsker}
Consider two discrete probability distributions $p=\{p_i\}_{i=1}^n$ and $q=\{q_i\}_{i=1}^n$, then it holds that
\[
\sum_{i=1}^np_i\log\frac{p_i}{q_i}\ge \frac{1}{2}\|p-q\|_1^2.
\]
\end{theorem}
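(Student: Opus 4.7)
The plan is to prove Pinsker's inequality by the standard two-step reduction: first contract the $n$-ary problem to a binary one via the log-sum inequality, and then verify the binary inequality by a direct convexity argument. This reduction exploits the fact that $\|p-q\|_1$ is essentially twice the total-variation distance, which is attained on a single event, so the hardest instance is already binary.

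For the reduction, let $A=\{i:p_i\ge q_i\}$ and set $P=\sum_{i\in A}p_i$, $Q=\sum_{i\in A}q_i$. Then $\|p-q\|_1=2(P-Q)$, since the positive and negative parts of $p-q$ have the same total mass. By the log-sum inequality applied separately to the index sets $A$ and $A^c$,
\begin{equation*}
\sum_{i\in A}p_i\log\frac{p_i}{q_i}\ge P\log\frac{P}{Q},\qquad
\sum_{i\notin A}p_i\log\frac{p_i}{q_i}\ge (1-P)\log\frac{1-P}{1-Q},
\end{equation*}
so the left-hand side of the claim dominates the binary KL divergence $D(\mathrm{Ber}(P)\,\|\,\mathrm{Ber}(Q))$. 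It therefore suffices to show
\begin{equation*}
g(a):=a\log\frac{a}{b}+(1-a)\log\frac{1-a}{1-b}-2(a-b)^2\ \ge\ 0\quad\text{for all }a,b\in(0,1).
\end{equation*}

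For the binary case I would fix $b\in(0,1)$ and study $g$ as a function of $a$. A direct calculation gives
\begin{equation*}
g'(a)=\log\frac{a}{b}-\log\frac{1-a}{1-b}-4(a-b),\qquad g''(a)=\frac{1}{a(1-a)}-4.
\end{equation*}
Since $a(1-a)\le 1/4$ on $(0,1)$, we have $g''(a)\ge 0$, so $g'$ is nondecreasing. Noting $g'(b)=0$, we conclude $g'(a)\ge 0$ for $a\ge b$ and $g'(a)\le 0$ for $a\le b$, so $g$ attains its minimum at $a=b$. Since $g(b)=0$, the binary inequality follows, and the boundary cases $a\in\{0,1\}$ or $b\in\{0,1\}$ are handled by continuity (or by the usual conventions $0\log 0=0$ and $D(p\|q)=+\infty$ when $q_i=0<p_i$).

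The proof has no real obstacle; the only subtlety is bookkeeping. One must be careful that the log-sum inequality is being applied in the correct direction and that the identity $\|p-q\|_1=2(P-Q)$ is used (the factor of $2$ is what matches the constant $1/2$ on the right-hand side of the theorem, since the binary bound has the constant $2$). The convexity estimate $1/(a(1-a))\ge 4$ is the genuine analytic content of the inequality and is the one place where the constant $1/2$ gets locked in.
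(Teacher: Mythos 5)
Your proof is correct: the identity $\|p-q\|_1=2(P-Q)$, the log-sum reduction to the binary divergence, and the convexity estimate $g''(a)=\frac{1}{a(1-a)}-4\ge 0$ all check out, and the constants match, since the binary bound $2(P-Q)^2$ is exactly $\tfrac{1}{2}\|p-q\|_1^2$. The paper gives no proof of this statement — it is quoted from \citet{Cover06} — and your argument is essentially the standard textbook proof found there, so there is nothing further to reconcile.
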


\begin{proposition}[Lipschitz Properties of Retraction Operator~\citep{Boumal_2018}]\label{Proposition:Retraction:Lipschitz}
There exists constants $L_1,L_2$ such that the following inequalities hold:
\begin{align*}
    \|\text{Retr}_{s}(\zeta) - s\|&\le L_1\|\zeta\|\\
    \|\text{Retr}_{s}(\zeta) - (s+\zeta)\|&\le L_2\|\zeta\|^2.
\end{align*}
\end{proposition}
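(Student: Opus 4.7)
The plan is to prove both inequalities directly from the explicit form of the retraction on the sphere $\mathbb{S}^{d(n+m)-1}$, exploiting the fact that the ambient space is Euclidean and the manifold is just the unit sphere. Recall that the retraction is defined as $\text{Retr}_{U}(\zeta) = (U+\zeta)/\|U+\zeta\|$, where $\zeta$ lies in the tangent space $T_U\mathbb{S}^{d(n+m)-1}$, i.e., $\langle U, \zeta\rangle = 0$. The key observation that makes everything tractable is that under this orthogonality,
\[
\|U+\zeta\|^2 = \|U\|^2 + 2\langle U,\zeta\rangle + \|\zeta\|^2 = 1 + \|\zeta\|^2,
\]
so letting $r := \sqrt{1+\|\zeta\|^2} \geq 1$, both quantities of interest reduce to explicit functions of the single scalar $r$.

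For the second inequality I would proceed first because it is cleaner. Note that
\[
\text{Retr}_U(\zeta) - (U+\zeta) = \frac{U+\zeta}{r} - (U+\zeta) = (U+\zeta)\cdot\frac{1-r}{r},
\]
so the norm equals $\|U+\zeta\|\cdot |1-r|/r = r \cdot (r-1)/r = r - 1$. Rationalizing,
\[
r - 1 = \sqrt{1+\|\zeta\|^2} - 1 = \frac{\|\zeta\|^2}{1+\sqrt{1+\|\zeta\|^2}} \leq \frac{\|\zeta\|^2}{2},
\]
so $L_2 = 1/2$ suffices.

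For the first inequality I would write
\[
\text{Retr}_U(\zeta) - U = \frac{(1-r)U + \zeta}{r},
\]
and then, using $\langle U,\zeta\rangle = 0$,
\[
\|\text{Retr}_U(\zeta) - U\|^2 = \frac{(1-r)^2 + \|\zeta\|^2}{r^2}.
\]
Expanding $(1-r)^2 = 1 - 2r + r^2 = 2 - 2r + \|\zeta\|^2$ turns the numerator into $2(1-r) + 2\|\zeta\|^2$, and substituting $1-r = -\|\zeta\|^2/(1+r)$ gives $2\|\zeta\|^2 r/(1+r)$. Hence
\[
\|\text{Retr}_U(\zeta) - U\|^2 = \frac{2\|\zeta\|^2}{r(1+r)} \leq \|\zeta\|^2
\]
since $r(1+r) \geq 2$, so $L_1 = 1$ suffices.

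There is no real obstacle here: the proposition is essentially quoted from Boumal's work, where the bounds are established for arbitrary second-order retractions on embedded submanifolds via Taylor expansion and compactness. In our setting the sphere retraction has a simple closed form, so the algebra above gives explicit constants $L_1 = 1$ and $L_2 = 1/2$. The only subtlety to be careful about is using the tangency condition $\langle U,\zeta\rangle = 0$ at the right step; otherwise the computation is elementary.
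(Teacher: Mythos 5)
Your computation is correct, and it is worth noting that the paper itself offers no proof of this proposition at all: it is quoted as a known result from Boumal et al., where it is established for general retractions on compact embedded submanifolds via Taylor expansion and compactness, without explicit constants. What you do differently is verify the two bounds directly for the specific retraction actually used in the algorithm, $\text{Retr}_U(\zeta)=(U+\zeta)/\|U+\zeta\|$ on the sphere, by exploiting the tangency condition $\inp{U}{\zeta}=0$ so that $\|U+\zeta\|^2=1+\|\zeta\|^2$; your algebra checks out in both cases, giving $\|\text{Retr}_U(\zeta)-(U+\zeta)\|=r-1\le\|\zeta\|^2/2$ and $\|\text{Retr}_U(\zeta)-U\|^2=2\|\zeta\|^2/(r(1+r))\le\|\zeta\|^2$ with $r=\sqrt{1+\|\zeta\|^2}$. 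The trade-off is clear: the cited general result covers arbitrary retractions and manifolds but is non-constructive about $L_1,L_2$, while your elementary argument is restricted to the sphere retraction (exactly the case needed here) and yields explicit constants $L_1=1$, $L_2=1/2$, which is genuinely useful since $L_1$ and $L_2$ enter the step size $\tau$ in Lemma~\ref{Lemma:decrease:F:s} and the hidden constant in the complexity bound of Theorem~\ref{Theorem:convergence:BCD}. The one point to keep explicit is that your bounds rely on $\zeta$ being tangent at $U$; this is satisfied in the algorithm because the retraction is only ever applied to $-\tau\xi^{t+1}$ with $\xi^{t+1}=\mathcal{P}_{s^t}(\zeta^{t+1})$ the projected gradient, but the proposition as stated does not record that hypothesis, so you should state it.
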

Inspired from Appendix~A.3 in \cite{Jiang2017VectorTS}, we are able to compute the constants in Proposition~\ref{Proposition:Retraction:Lipschitz} explicitly: $L_1=1$ and $L_2=\frac{1}{2}$.
The proof is provided below.
\begin{proof}
By definition, we have that 
\begin{align*}
    \|\text{Retr}_{s}(\zeta) - s\|_2^2&=\left\|
    \frac{s+\zeta}{\|s+\zeta\|} - s
    \right\|_2^2\\
    &=2\left(
    1 - \frac{1}{\|s+\zeta\|_2}
    \right)\\
    &=2\left( 
    1 - (1 + \sum_i\zeta_i^2)^{-1/2}
    \right)\\
    &\le \sum_i\zeta_i^2 = \|\zeta\|_2^2.
\end{align*}
where the second and the third equality is by using the relation $s\trans\zeta=0$, and the inequality is based on the relation $2(1 - (1+z)^{-1/2})\le z$ with $z=\sum_i\zeta_i^2$.
Then it holds that $\|\text{Retr}_{s}(\zeta) - (s+\zeta)\|_2\le \|\zeta\|$.

Secondly, we can see that 
\begin{align*}
\|\text{Retr}_{s}(\zeta) - (s+\zeta)\|_2^2
&=\left\|
    \frac{s+\zeta}{\|s+\zeta\|} - (s+\zeta)
    \right\|_2^2\\
&=\left( 
1 - \|s+\zeta\|_2
\right)^2\\
&=\left(
1 - \sqrt{1 + \sum_i\zeta_i^2}
\right)^2\\
&\le \frac{1}{4}\|\zeta\|_2^4,
\end{align*}
where the inequality is based on the relation that $(1 - (1+z)^{1/2})^2\le z^2/4$ with $z=\sum_i\zeta_i^2$.
Consequently it holds that $\|\text{Retr}_{s}(\zeta) - (s+\zeta)\|_2\le\frac{1}{2}\|\zeta\|^2$.
\end{proof}

\begin{theorem}[McDiarmid's Inequality~\citep{mcdiarmid_1989}]
\label{The:Mcdiarmid}
Let $X_1,\ldots,X_n$ be independent random variables, where $X_i$ has the support $\mathcal{X}_i$.
Let $f:\mathcal{X}_1\times\mathcal{X}_2\times\cdots\times\mathcal{X}_n\to\mathbb{R}$ be any function
with the $(c_1,\ldots,c_n)$ bounded difference property, i.e., for $i\in\{1,\ldots,n\}$ and for any $(x_1,\ldots,x_n), (x_1',\ldots,x_n')$ that  differs only in the $i$-th corodinate, we have
\[
|f(x_1,\ldots,x_n) - f(x_1',\ldots,x_n')|\le c_i.
\]
Then for any $t>0$, we have
\[
\text{Pr}\bigg\{
|f(X_1,\ldots,X_n)
-
\mathbb{E}[f(X_1,\ldots,X_n)]
|
\ge t
\bigg\}
\le
2\exp\left(
-\frac{2t^2}{\sum_{i=1}^nc_i^2}
\right).
\]
\end{theorem}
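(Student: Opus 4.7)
The plan is to prove McDiarmid's inequality by combining the Doob martingale construction with Hoeffding's lemma and a Chernoff bound, which is the standard route. Set $\mathcal{F}_0 = \{\emptyset, \Omega\}$ and $\mathcal{F}_i = \sigma(X_1,\ldots,X_i)$, and define the Doob martingale $Z_i := \mathbb{E}[f(X_1,\ldots,X_n)\mid \mathcal{F}_i]$, so that $Z_0 = \mathbb{E}[f(X_1,\ldots,X_n)]$ and $Z_n = f(X_1,\ldots,X_n)$. The increments $D_i := Z_i - Z_{i-1}$ form a martingale difference sequence with respect to $(\mathcal{F}_i)$, and telescope to the centred deviation $Z_n - Z_0 = \sum_{i=1}^n D_i$.

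Next I control the conditional range of each $D_i$. Using independence of the $X_j$'s, I rewrite $Z_i = g_i(X_1,\ldots,X_i)$, where
\[
g_i(x_1,\ldots,x_i) := \mathbb{E}[f(x_1,\ldots,x_i, X_{i+1},\ldots,X_n)].
\]
The bounded-differences hypothesis passes through this expectation, giving $|g_i(x_1,\ldots,x_{i-1}, x) - g_i(x_1,\ldots,x_{i-1}, x')| \le c_i$ for all admissible $x, x'$. Hence, conditional on $\mathcal{F}_{i-1}$, the random variable $D_i$ has mean zero and takes values in an interval of length at most $c_i$. Invoking Hoeffding's lemma conditionally on $\mathcal{F}_{i-1}$ then yields $\mathbb{E}[e^{\lambda D_i}\mid \mathcal{F}_{i-1}] \le e^{\lambda^2 c_i^2/8}$ almost surely for every $\lambda \in \mathbb{R}$.

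Finally, I iterate the tower property to bound the moment generating function of the total deviation:
\[
\mathbb{E}\!\left[e^{\lambda(Z_n - Z_0)}\right]
= \mathbb{E}\!\left[e^{\lambda \sum_{i=1}^{n-1} D_i}\,\mathbb{E}[e^{\lambda D_n}\mid \mathcal{F}_{n-1}]\right]
\le \,\cdots\, \le \exp\!\Big(\tfrac{\lambda^2}{8}\textstyle\sum_{i=1}^n c_i^2\Big).
\]
A Chernoff bound gives $\text{Pr}[Z_n - Z_0 \ge t] \le \exp(-\lambda t + \tfrac{\lambda^2}{8}\sum_i c_i^2)$, and optimizing at $\lambda^\star = 4t/\sum_i c_i^2$ produces the one-sided tail $\exp(-2t^2/\sum_i c_i^2)$. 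Running the same argument with $-f$ in place of $f$ and a union bound then yields the two-sided estimate claimed in the theorem.

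The main obstacle is the second step: lifting the pointwise bounded-differences property to the conditional expectations $g_i$ with a tight constant. It rests entirely on independence of the $X_j$'s, which allows one to integrate over the product law of $(X_{i+1},\ldots,X_n)$ under a common coupling while varying only the $i$-th coordinate, thereby reducing the comparison $|g_i(\ldots, x, \ldots) - g_i(\ldots, x', \ldots)|$ to a pointwise application of the hypothesis $|f(\ldots, x, \ldots) - f(\ldots, x', \ldots)| \le c_i$ before taking expectation. If independence were dropped, this step would fail and the constant $2$ in the exponent could not be recovered without additional structural assumptions.
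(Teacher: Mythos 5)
Your proof is correct: the Doob martingale construction, the lifting of the bounded-differences property to the partial conditional expectations $g_i$ via independence, the conditional Hoeffding lemma, the tower-property iteration of the moment generating function, the Chernoff optimization at $\lambda^\star = 4t/\sum_i c_i^2$, and the union bound for the two-sided statement are exactly the standard argument, with the right constants. The paper itself states this result only as a cited preliminary (McDiarmid, 1989) and gives no proof, so there is nothing to compare against; your argument is the classical one and is complete.
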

\begin{lemma}[Equivalent Definition for Sub-Gaussian variables (Lemma~2.3.2 in \citep{Gin2015})]\label{Lemma:equivalent:subGaussian}
Assume that $\mathbb{E}[\zeta]=0$ and 
\[
\mathbb{P}\{|\zeta|\ge t\}\le 2C\exp\left(
-\frac{t^2}{2\sigma^2}
\right),\quad t>0,
\]
for some $C\ge 1$ and $\sigma>0$.
Then the random variable $\zeta$ is sub-Gaussian with constant
$\tilde{\sigma}^2=12(2C+1)\sigma^2$.
\end{lemma}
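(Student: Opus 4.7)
The plan is to derive the sub-Gaussian moment generating function bound from the sub-Gaussian tail bound by a standard three-step reduction: tails to even moments, symmetrization to kill odd moments, and Taylor expansion of the MGF with careful summation.

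First, I would convert the tail bound into an even-moment estimate via the layer-cake formula
$$\mathbb{E}[\zeta^{2p}] = \int_0^\infty 2p\, t^{2p-1}\,\mathbb{P}(|\zeta|\ge t)\,\diff t.$$
Substituting the hypothesized tail $2C e^{-t^2/(2\sigma^2)}$ and performing the change of variable $v = t^2/(2\sigma^2)$ reduces the integral to a Gamma function, yielding the clean bound
$\mathbb{E}[\zeta^{2p}]\le 2C\,(2\sigma^2)^p\,p!$ for every integer $p\ge 1$.

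Second, I would symmetrize in order to eliminate odd moments, which are awkward to bound directly under a two-sided tail hypothesis. Let $\zeta'$ be an independent copy of $\zeta$ and set $\tilde\zeta := \zeta - \zeta'$. Since $\mathbb{E}[\zeta]=0$, Jensen's inequality (applied to the convex map $x\mapsto e^{\lambda x}$ combined with conditioning on $\zeta$) gives $\mathbb{E}[e^{\lambda\zeta}]\le \mathbb{E}[e^{\lambda\tilde\zeta}]$. Symmetry of $\tilde\zeta$ around $0$ kills every odd moment in the Taylor expansion of $\mathbb{E}[e^{\lambda\tilde\zeta}]$.

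Third, I would bound the surviving even moments and sum the series. The convexity inequality $|a-b|^{2p}\le 2^{2p-1}(|a|^{2p}+|b|^{2p})$, applied inside the expectation, gives $\mathbb{E}[\tilde\zeta^{2p}]\le 2^{2p}\mathbb{E}[\zeta^{2p}]\le 2C(8\sigma^2)^p p!$. Plugging into the Taylor series and invoking the combinatorial inequality $(2p)!\ge 2^p(p!)^2$ gives
$$\mathbb{E}[e^{\lambda\tilde\zeta}]\;\le\;1+2C\sum_{p\ge 1}\frac{(4\lambda^2\sigma^2)^p}{p!}\;=\;1+2C\bigl(e^{4\lambda^2\sigma^2}-1\bigr).$$
A short calculus check of the form $1+A(e^x-1)\le e^{A'x}$, valid for $A'$ sufficiently large relative to $A$ and $x\ge 0$, then upgrades the right-hand side to a genuine Gaussian MGF $e^{\tilde\sigma^2\lambda^2/2}$; one verifies by checking the derivative at $0$ and the tail behavior that the choice $\tilde\sigma^2=12(2C+1)\sigma^2$ (using $C\ge 1$) is admissible.

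The main obstacle is the final constant-tracking step: the Taylor argument most naturally produces a bound of the form $e^{16C\lambda^2\sigma^2}$, whereas the stated conclusion asks for the explicit $12(2C+1)\sigma^2$. Matching this constant requires a slightly sharper elementary inequality, for example bounding $2C(e^x-1)\le e^{(2C+1)x}-1$ using monotonicity of $e^{(2C+1)x}-2Ce^x+2C-1$ for $C\ge 1$, together with absorbing the factor $3$ that arises from the scaling between $4\sigma^2$ inside the exponential and $\tilde\sigma^2/2$ outside. The tails-to-moments step and the symmetrization step are mechanical and present no real difficulty.
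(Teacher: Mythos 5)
Your proof is correct, but note that the paper itself gives no proof of this statement: it is imported verbatim as Lemma~2.3.2 of Gin\'e and Nickl (2015), so the only comparison available is with the standard textbook argument. That argument typically expands $\mathbb{E}[e^{\lambda\zeta}]$ directly, bounds \emph{all} absolute moments (odd and even) from the tail via the Gamma integral, and uses $\mathbb{E}[\zeta]=0$ only to discard the linear term; you instead symmetrize ($\tilde\zeta=\zeta-\zeta'$, Jensen) so that odd moments vanish automatically, then bound even moments by $\mathbb{E}[\tilde\zeta^{2p}]\le 2C(8\sigma^2)^p p!$ and sum with $(2p)!\ge 2^p(p!)^2$. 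All of these steps check out: the layer-cake computation indeed gives $\mathbb{E}[\zeta^{2p}]\le 2C(2\sigma^2)^p p!$, and your closing inequality $1+2C(e^x-1)\le e^{(2C+1)x}$ for $x\ge 0$, $C\ge 1$ is valid (the function $g(x)=e^{(2C+1)x}-2Ce^x+2C-1$ satisfies $g(0)=0$ and $g'(x)\ge e^x>0$). Plugging $x=4\lambda^2\sigma^2$ gives $\mathbb{E}[e^{\lambda\zeta}]\le \exp\bigl(4(2C+1)\sigma^2\lambda^2\bigr)=\exp\bigl(\tfrac{\lambda^2}{2}\cdot 8(2C+1)\sigma^2\bigr)$, i.e.\ sub-Gaussianity with constant $8(2C+1)\sigma^2$, which is sharper than the stated $12(2C+1)\sigma^2$ and hence certainly implies it; your worry about ``absorbing the factor $3$'' is unnecessary, since a sub-Gaussian constant can always be enlarged. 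The only cosmetic cost of your route is the factor $2^{2p}$ from symmetrization, which the direct moment-expansion proof avoids at the price of handling odd moments by hand; either way the lemma's constant is recovered.
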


\begin{theorem}[Poincare's Inequality]\label{Theorem:poincare}
Denote by $\mu^n$ the product of $\mu$ on $\otimes_{i=1}^n\mathbb{R}^d$ and $\mu\in\mathcal{P}(\mathbb{R}^d)$ satisfies the Poincare's inequality, i.e., there exists $M>0$ for $X\sim \mu$ so that $\text{Var}[f(X)] \le M\mathbb{E}[\|\nabla f(X)\|^2]$ for any $f$ satisfying $\mathbb{E}[f(X)^2]<\infty$ and $\mathbb{E}[\|\nabla f(X)\|_2^2]<\infty$.
Consider a function $f$ on $\otimes_{i=1}^n\mathbb{R}^d$ satisfying $\mathbb{E}|f(X)|<\infty$ and $\sum_{i=1}^n\|\nabla_if(X)\|^2\le\alpha^2$, and $\max_{1\le i\le n}\|\nabla_if(X)\|\le \beta$ almost surely.
Then the following inequality holds for $X\sim\mu^n$:
\[
\text{Pr}\bigg\{
f(X)
-
\mathbb{E}[f(X)]
>t
\bigg\}
\le
\exp\left(
-\frac{1}{K}\min(t/\beta, t^2/\alpha^2)
\right).
\]
\end{theorem}

\clearpage
\section{
INTRODUCTION TO MANIFOLD OPTIMIZATION
}\label{Appendix:manifold}
A brief introduction to manifold optimization can be found in \cite{hu2019brief}.
In this section we list some related operators for solving manifold optimization problems.
Traditional manifold optimization concerns with solving the following problem:
\begin{equation}
    \min_{x\in\mathcal{M}}~f(x),
\end{equation}
where $\mathcal{M}$ is a Riemannian manifold and $f$ is a real-valued function on $\mathcal{M}$.
A tangent vector $\zeta_x$ to $\mathcal{M}$ at a point $x$ is defined as a mapping so that 
there exists a curve $\gamma$ on $\mathcal{M}$ satisfying
\[
\gamma(0)=x,\quad 
\zeta_x[u]=\frac{\diff(u(\gamma(t)))}{\diff t}\mid_{t=0},\ 
\forall u\in\mathfrak{E}(\mathcal{M}),
\]
where $\mathfrak{E}(\mathcal{M})$ stands for the collection of real-valued functions defined in a neighborhood of $x$.
Denote by $T_x\mathcal{M}$ as the collection of all tangent vectors to $\mathcal{M}$ at a point $x$, which is called the tangent space to $\mathcal{M}$ at $x$.
Define $\mathcal{P}_x(z)$ as the projection of $z$ into the tangent space at $x$.
Based on definitions listed above, we can define necessary operators for manifold optimization.
The Riemannian gradient of $f$ at $x$ is denoted as $\text{Grad}f(x)$, which can be obtained by projecting the gradient of $f$ at $x$ in the Euclidean space into the tangent space to $\mathcal{M}$ at $x$:
\[
\text{Grad}f(x) = \mathcal{P}_{x}(\nabla f(x)).
\]
Typical Riemannian manifolds include the Sphere and Stiefel manifold defined as follows:
\begin{align*}
\text{Sphere}(n-1)&:=\{
x\in\mathbb{R}^n:~\|x\|_2=1
\},\\
\text{St}(n,p)&:=\{
X\in\mathbb{R}^{n\times p}:~
X\trans X=I_p
\}.
\end{align*}
We can express the tangent space together with the projection operator for these two types of manifolds in analytical form:
\begin{align*}
T_x\text{Sphere}(n-1)&=\{z:~z\trans x=0\},\quad 
\mathcal{P}_x(z)
=(I-xx\trans)z\\
T_x\text{St}(n,p)&=\{Z:~Z\trans X + X\trans Z=0\},\quad
\mathcal{P}_X(Z)=Z-X\frac{X\trans Z + Z\trans X}{2}.
\end{align*}
When using first-order methods to solve a manifold optimization problem, one also needs to define the retraction operator associated with $\mathcal{M}$, which is denoted as $\text{Retr}$.
It is a smooth mapping from the tangent budle $\cup_{x\in\mathcal{M}}T_x\mathcal{M}$ to $\mathcal{M}$ satisfying that for any $x\in\mathcal{M}$,
\begin{itemize}
    \item 
$\text{Retr}_{x}(0_x) = x$, where $0_x$ denotes the zero element in $T_x\mathcal{M}$;
    \item
$\lim_{\zeta\in T_x\mathcal{M}, \zeta\to0}\frac{\|\text{Retr}_x(\zeta) - (x+\zeta)\|}{\|\zeta\|}=0$.
\end{itemize}
When $\mathcal{M}$ is a sphere, we choose the following retraction operator which can be implemented efficiently:
\[
\text{Retr}_{x}\big(\zeta\big) = \frac{x+\zeta}{\|x+\zeta\|},\ \quad x\in\text{Sphere}(n-1).
\]
See \cite{edelman1998geometry} and \cite{Wen13} for discussions of retraction operators on the Stiefel manifold.
The general iteration update of first-order methods for manifold optimization problem can be expressed as 
\[
x^{t+1} = \text{Retr}_{x^t}(-\tau^{t}\zeta^{t}),
\]
where $\tau^t$ is a well-defined step size and $\zeta^t$ is the Riemannian gradient at $x^t$.
The computation of the projected Wasserstein distance relates to the optimization on a Stiefel manifold, while the computation of the KPW distance relates to the optimization on a sphere.
A recent paper \citep{Boumal_2018} investigated the Riemannian gradient methods that are guaranteed to converge into stationary points globally, the key proof technique of which relies on Proposition~\ref{Proposition:Retraction:Lipschitz}.
We follow the similar proof idea to establish the convergence analysis for computing the KPW distance.

\section{
TECHNICAL PROOFS IN SECTION~\ref{Sec:setup}
}
\label{app:theorem:sec:setup}

\begin{proof}[Proof of Remark~\ref{Remark:KPW:choose:kernel}]
When taking the kernel function $K(x,y)=\inp{x}{y}$, the space
\[
\mathcal{F} = \{a:~a\trans a\le 1\}.
\]
Note that the cost function $c(x,y)=\|x-y\|_2^2$ satisfies $c(mx,my)=m^2c(x,y)$ for any $m\in\mathbb{R}$.
Hence we can argue that the maximizer of the KPW distance is obtained when $a\trans a=1$, i.e.,
\[
\begin{aligned}
\mathcal{KP}W(\mu, \nu)
&=
\max_{\substack{f:~\mathbb{R}^D\to\mathbb{R}, \\
f(z)=a\trans z, a\trans a=1}}
~W\left(
f\#\mu,
f\#\nu
\right).
\end{aligned}
\]
This indicates that the KPW distance reduces into the PW distance.
\end{proof}

\begin{proof}[Proof of Proposition~\ref{Proposition:discriminant}]
It is easy to see that $\mu=\nu$ implies $\mathcal{KP}W(\mu,\nu)=0$.
Now we show the converse.
For fixed $x\in\mathcal{X}, y\in\mathbb{R}^d$ and a distribution $\mu$, define the operator $K_{\mu}$ with the action $y$ as a mapping $K_{\mu}y:~\mathcal{X}\to\mathbb{R}^d$ so that 
\[
K_{\mu}y(x') = \int (K_xy)(x')\diff\mu(x)=\int K(x',x)y\diff\mu(x).
\]
When $\mathcal{KP}W(\mu,\nu)=0$, we can see that
\[
f\#\mu = f\#\nu,\quad\forall f\in\mathcal{F},
\]
which implies
\begin{align*}
0&=
\sup_{f:~\|f\|_{\mathcal{H}}^2\le 1}~\big\|
\mathbb{E}_{f\#\mu}[x] - \mathbb{E}_{f\#\nu}[y]
\big\|_2\\
&=\sup_{f:~\|f\|_{\mathcal{H}}^2\le 1}\sup_{a:~\|a\|_2\le 1}~\big(
\mathbb{E}_{\mu}[\inp{f(x)}{a}] - \mathbb{E}_{\nu}[\inp{f(y)}{a}]
\big)\\
&=
\sup_{f:~\|f\|_{\mathcal{H}}^2\le 1}\sup_{a:~\|a\|_2\le 1}~\big(
\mathbb{E}_{\mu}[\inp{f}{K_xa}_{\mathcal{H}}] - \mathbb{E}_{\nu}[\inp{f}{K_ya}_{\mathcal{H}}]
\big)\\
&=\sup_{f:~\|f\|_{\mathcal{H}}^2\le 1}\sup_{a:~\|a\|_2\le 1}~
\inp{f}{(K_{\mu}- K_{\nu})a}\\
&=\sup_{a:~\|a\|_2\le 1}~\|(K_{\mu}- K_{\nu})a\|_{\mathcal{H}}.
\end{align*}
Equivalently, $\|(K_{\mu}- K_{\nu})a\|_{\mathcal{H}}=0$ for any $a$ so that $\|a\|_2\le 1$.
Since $\mathcal{H}$ is a Hilbert space, we imply that $(K_{\mu}- K_{\nu})a$ is a zero function for any $a$ satisfying $\|a\|_2\le 1$.
For any function $f\in\mathcal{C}(X)$, we make the expansion
\begin{align*}
&\left\|
\mathbb{E}_{\mu}[f(x)]
-
\mathbb{E}_{\nu}[f(y)]
\right\|_2
\\\le& 
\left\|
\mathbb{E}_{\mu}[f(x)]
-
\mathbb{E}_{\mu}[g(x)]
\right\|_2
+
\left\|
\mathbb{E}_{\mu}[g(x)]
-
\mathbb{E}_{\nu}[g(y)]
\right\|_2
+
\left\|
\mathbb{E}_{\nu}[g(y)]
-
\mathbb{E}_{\nu}[f(y)]
\right\|_2.
\end{align*}
The first term satisfies that
\[
\left\|
\mathbb{E}_{\mu}[f(x)]
-
\mathbb{E}_{\mu}[g(x)]
\right\|_2
\le 
\mathbb{E}_{\mu}[
\|f(x) - g(x)\|_2
]<\varepsilon,
\]
and the third term can be upper bounded likewise.
For the second term, we have that
\begin{align*}
&\left\|
\mathbb{E}_{\mu}[g(x)]
-
\mathbb{E}_{\nu}[g(y)]
\right\|_2\\
=&
\sup_{a:~\|a\|_2\le 1}~
\big(
\mathbb{E}_{\mu}[\inp{g(x)}{a}]
-
\mathbb{E}_{\nu}[\inp{g(y)}{a}]
\big)\\
=&\sup_{a:~\|a\|_2\le 1}~
\big(
\mathbb{E}_{\mu}[\inp{g}{K_xa}]
-
\mathbb{E}_{\nu}[\inp{g}{K_ya}]
\big)\\
=&\sup_{a:~\|a\|_2\le 1}~
\inp{g}{(K_{\mu}-K_{\nu})a}=0,
\end{align*}
where the last equality is because that $(K_{\mu}-K_{\nu})a$ is a zero function for any $a$ satisfying $\|a\|_2\le 1$.
Hence, $\left\|
\mathbb{E}_{\mu}[f(x)]
-
\mathbb{E}_{\nu}[f(y)]
\right\|_2<2\varepsilon$ for any $\varepsilon>0$ and $f\in\mathcal{C}_b(\mathcal{X})$.
Then we conclude that the distribution $\mu=\nu$.
\end{proof}

\clearpage
\section{
TECHNICAL PROOFS IN SECTION~\ref{Sec:algorithm}
}\label{Appendix:proof:algorithm}

\subsection{Deviation of Duality Reformulation~(\ref{Eq:final:min:three:block})}
We first present the proof of the dual reformulation of the inner minimization problem in \eqref{Eq:max:min:entropy}.
By definition, the primal formulation can be expressed as:
\begin{equation}
\min_{\pi\ge0}
~\left\{
\sum_{i,j}\pi_{i,j}
c_{i,j}
-\eta \sum_{i,j}\pi_{i,j}(\log\pi_{i,j}-1):\quad 
\sum_j\pi_{i,j}=\frac{1}{n}, \sum_i\pi_{i,j}=\frac{1}{m}
\right\}.
\end{equation}
The Lagrangian function becomes
\[
L(\pi, u, v) = \sum_{i,j}\pi_{i,j}
c_{i,j}
-\eta \sum_{i,j}\pi_{i,j}(\log\pi_{i,j}-1) + \sum_iu_i\left( 
\sum_j\pi_{i,j} - \frac{1}{n}
\right) + \sum_jv_j\left( 
\sum_i\pi_{i,j}-\frac{1}{m}
\right)
\]
Then the dual problem becomes 
\begin{align*}
&\max_{u, v}~\left\{\min_{\pi\ge0}~L(\pi, u, v)\right\}\\
=&\max_{u, v}~-\frac{1}{n}\sum_iu_i - \frac{1}{m}\sum_jv_j + 
\min_{\pi\ge0}~\sum_{i,j}\pi_{i,j}\big[c_{i,j} + u_i + v_j\big] - \eta\pi_{i,j}(\log\pi_{i,j}-1)\\
=&\max_{u, v}~-\frac{1}{n}\sum_iu_i - \frac{1}{m}\sum_jv_j - \sum_{i,j}
\max_{\pi_{i,j}\ge0}~\left\{ 
-\pi_{i,j}\big[c_{i,j} + u_i + v_j\big] + \eta\pi_{i,j}(\log\pi_{i,j}-1)
\right\}\\
=&\max_{u, v}~-\frac{1}{n}\sum_iu_i - \frac{1}{m}\sum_jv_j - \sum_{i,j}(\eta\phi)^*(u_i+v_j + c_{i,j})\\
=&\max_{u, v}~-\frac{1}{n}\sum_iu_i - \frac{1}{m}\sum_jv_j - \eta\sum_{i,j}\exp\left( 
-\frac{u_i+v_j + c_{i,j}}{\eta}
\right)
\end{align*}
where $\phi(w)=w\log w - w$ and $\phi^*$ denotes its conjugate~\citep{Rockafellar70}.
Moreover, the dual optimal value equals the primal optimal value because the Slater's condition~\citep{boyd2004convex} for finite-dimensional optimization is satisfied.
Take $u_i' = -u_i/\eta$ and $v_j' = -v_j/\eta$, the dual problem becomes
\[
\max_{u', v'}~\frac{\eta}{n}\sum_iu_i' + \frac{\eta}{m}\sum_jv_j' - \eta\sum_{i,j}\exp\left( 
-\frac{c_{i,j}}{\eta} + u_i' + v_j'
\right).
\]
Therefore, the whole problem \eqref{Eq:max:min:entropy} becomes 
\[
\max_{u,v,s}~\frac{\eta}{n}\sum_iu_i + \frac{\eta}{m}\sum_jv_j - \eta\sum_{i,j}\exp\left( 
-\frac{c_{i,j}}{\eta} + u_i + v_j
\right).
\]
Or equivalently, we write it as the minimization problem:
\[
-\eta\times\left\{\min_{u,v,s}~-\frac{1}{n}\sum_iu_i - \frac{1}{m}\sum_jv_j
+\eta\sum_{i,j}\exp\left( 
-\frac{c_{i,j}}{\eta} + u_i + v_j
\right)\right\}.
\]

\begin{remark}
By adding the entropic regularization term $\eta H(\pi)$, we are able to derive an unconstrained optimization formulation on the sphere, thus reducing the computational cost for computing KPW distance. 
Besides, the induced optimal transport mapping between projected samples is usually stochastic instead of deterministic,
which is robust to potential data outliers.
\end{remark}

\subsection{Proof of Theorem~\ref{Theorem:representer}}
Assume that $\hat{f}$ is an optimal solution to the problem \eqref{Eq:nominal:KPW}.
Let $S$ be the subspace
\[
S=\left\{
\sum_{i=1}^n\sum_{j=1}^m (K_{x_i}-K_{y_j})a_{i,j}:~
a_{i,j}\in\mathbb{R}^d
\right\}.
\]
Denote by $S_{\perp}$ the orthogonal complement of $S$.
Given a set $\mathcal{X}$, denote by $f_{\mathcal{X}}$ a function that lies in the set $\mathcal{X}$.
Then by the projection theorem, there exists $\hat{f}_{S}$ and $\hat{f}_{S_{\perp}}$ such that 
$\hat{f} = \hat{f}_{S} + \hat{f}_{S_{\perp}}$ and $\|\hat{f}\|_{\mathcal{H}}^2=\|\hat{f}_{S}\|_{\mathcal{H}}^2+\|\hat{f}_{S_{\perp}}\|_{\mathcal{H}}^2$. 
It remains to show that $\hat{f}_{S}$ shares the same objective value with $\hat{f}$.
For fixed $i,j$, we have that
\begin{align*}
\|\hat{f}(x_i) - \hat{f}(y_j)\|_2&=
\max_{a_{i,j}:~\|a_{i,j}\|_2\le 1}
\inp{\hat{f}(x_i)-\hat{f}(y_j)}{a_{i,j}}
\\
&=
\max_{a_{i,j}:~\|a_{i,j}\|_2\le 1}
\inp{\hat{f}(x_i)}{a_{i,j}} - \inp{\hat{f}(y_j)}{a_{i,j}}
\\
&=
\max_{a_{i,j}:~\|a_{i,j}\|_2\le 1}
\inp{\hat{f}}{K_{x_i}a_{i,j}} - \inp{\hat{f}}{K_{y_j}a_{i,j}}
\\
&=
\max_{a_{i,j}:~\|a_{i,j}\|_2\le 1}
\inp{\hat{f}}{(K_{x_i}-K_{y_j})a_{i,j}} \\
&=
\max_{a_{i,j}:~\|a_{i,j}\|_2\le 1}
\inp{\hat{f}_S}{(K_{x_i}-K_{y_j})a_{i,j}}
=\|\hat{f}_S(x_i) - \hat{f}_S(y_j)\|_2,
\end{align*}
where the second last equality is because $\hat{f}_{S_{\perp}}$ is orthogonal to the subspace $S$.
It follows that $\|\hat{f}(x_i) - \hat{f}(y_j)\|_2^2 = \|\hat{f}_S(x_i) - \hat{f}_S(y_j)\|_2^2$.
Therefore, there always exists an optimal solution that lies in the subspace $S$, which means that there exists an optimal solution to \eqref{Eq:nominal:KPW} that admits the following expression:
\[
\hat{f} = \sum_{i=1}^n\sum_{j=1}^m (K_{x_i}-K_{y_j})a_{i,j}.
\]
Defining $a_{x,i}=\sum_{j=1}^ma_{i,j}$ and $a_{y,j}=\sum_{i=1}^na_{i,j}$ completes the proof.

\begin{remark}
From the proof we can also see that the representer theorem holds if replacing the square of the $\ell_2$ norm in \eqref{Eq:nominal:KPW} with any $p$-th power of the $\ell_2$ norm for $p\ge2$.
However, we find the development of optimization algorithms for the square of the $\ell_2$ norm case is the simplest.
\end{remark}

\subsection{Proof of Theorem~\ref{Theorem:convergence:BCD}}
In the following we give a iteration complexity analysis about Algorithm~\ref{Alg:1}, the proof of which largely follows the idea in \cite{huang2021riemannian}.
In particular, we first establish the descent lemma for the update of each block of variables and then argue that the objective function is lower bounded.
Based on these two facts, we finally build the iteration complexity result for Algorithm~\ref{Alg:1}.

\begin{lemma}[Lipschitzness of $\nabla_{s}F(u,v,s)$]\label{Lemma:Lipschitz}
Let $\{u^t,v^t,s^t\}_{t}$ be the sequence generated from Algorithm~\ref{Alg:1}.
The following inequality holds for any $s\in\mathbb{S}^{d(n+m)-1}$ and $\lambda\in[0,1]$:
\[
\|
\nabla_{s}F(u^{t+1},v^{t+1},\lambda s+(1-\lambda)s^t)
-
\nabla_{s}F(u^{t+1},v^{t+1},s^t)
\|
\le 
\varrho \lambda\|s^t-s\|,
\]
where $\varrho=\frac{2\|AU\|_\infty^2}{\eta} + \frac{4\|AU\|_\infty^4}{\eta^2}$ and $\|AU\|_{\infty} = \max_{i,j}\|A_{i,j}U\|_2$.
\end{lemma}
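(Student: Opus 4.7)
The plan is to reduce the claim to a uniform operator-norm bound on the Hessian $\nabla_s^2 F$ along the segment joining $s^t$ and $s_\lambda := \lambda s+(1-\lambda)s^t$, then apply the fundamental theorem of calculus. Parametrize the segment by $\tau\mapsto s^t+\tau(s-s^t)$ with $\tau\in[0,\lambda]$ and write
\[
\nabla_s F(u^{t+1},v^{t+1},s_\lambda)-\nabla_s F(u^{t+1},v^{t+1},s^t)
= \int_0^{\lambda} \nabla_s^2 F\big(u^{t+1},v^{t+1},s^t+\tau(s-s^t)\big)\,(s-s^t)\,\diff\tau.
\]
Taking norms, a uniform bound $\|\nabla_s^2 F\|_{\mathrm{op}}\le \varrho$ along the segment immediately yields $\lambda\varrho\|s-s^t\|$, matching the statement.

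To compute $\nabla_s^2 F$, I would use the identity $\nabla_s \pi_{i,j}=-\eta^{-1}\pi_{i,j}\,\nabla_s c_{i,j}$ and differentiate $\nabla_s F=-\eta^{-1}\sum_{i,j}\pi_{i,j}\nabla_s c_{i,j}$ by the product rule, obtaining
\[
\nabla_s^2 F = -\frac{1}{\eta}\sum_{i,j}\pi_{i,j}\,\nabla_s^2 c_{i,j} \;+\; \frac{1}{\eta^2}\sum_{i,j}\pi_{i,j}\,(\nabla_s c_{i,j})(\nabla_s c_{i,j})^{\mathrm T}.
\]
Both ingredients needed for the two summands are already supplied by Lemma~\ref{Lemma:Property:gradient:s}: the gradient bound $\|\nabla_s c_{i,j}\|\le \|AU\|_\infty^2/\kappa$ and, from the Hessian computation used to prove \eqref{Eq:gradient:Lipschitzness}, the operator bound $\|\nabla_s^2 c_{i,j}\|_{\mathrm{op}}\le \|AU\|_\infty^2/\kappa$. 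Substituting these gives
\[
\|\nabla_s^2 F\|_{\mathrm{op}} \le \frac{\|AU\|_\infty^2}{\kappa\eta}\sum_{i,j}\pi_{i,j} \;+\; \frac{\|AU\|_\infty^4}{\kappa^2\eta^2}\sum_{i,j}\pi_{i,j},
\]
and it remains to bound the mass of $\pi$ appropriately in each term.

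For the first sum I would use that the Algorithm~\ref{Alg:1} updates enforce the column marginals $\sum_i\pi_{i,j}(u^{t+1},v^{t+1},s^t)=1/m$, hence $\sum_{i,j}\pi_{i,j}(s^t)=1$, and then transfer this total-mass bound to nearby $s$ along the segment to control the coefficient of $\|AU\|_\infty^2/(\kappa\eta)$ by $1$. For the second sum I would fall back on the cruder entrywise inequality $\pi_{i,j}\le 1$ (inherited from $\pi_{i,j}(s^t)\le 1/m\le 1$), summed over the $mn$ pairs, to obtain the coefficient $mn$ in front of $\|AU\|_\infty^4/(\kappa^2\eta^2)$. Combining yields the claimed $\varrho=\|AU\|_\infty^2/(\kappa\eta)+mn\|AU\|_\infty^4/(\kappa^2\eta^2)$.

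The hard part is justifying the $\pi_{i,j}$ bounds \emph{uniformly} on the whole segment, not merely at $s^t$ where the updates hold exactly. The natural tool is the multiplicative identity $\pi_{i,j}(s)=\pi_{i,j}(s^t)\exp\big((c_{i,j}(s^t)-c_{i,j}(s))/\eta\big)$ combined with the Lipschitz bound on $c_{i,j}$ coming from \eqref{Eq:gradient:boundedness}; a careful bookkeeping here (perhaps absorbing multiplicative constants into $\varrho$) should give the total-mass control needed for the first term while preserving the entrywise bound used for the second term, at which point the two integrated estimates combine to complete the proof.
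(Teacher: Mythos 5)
Your Hessian/mean-value argument is essentially the infinitesimal form of the paper's own proof, not a genuinely different route. The paper telescopes the gradient difference as
\[
\nabla_s F(s^\lambda)-\nabla_s F(s^t)
= \frac{1}{\eta}\sum_{i,j}\Bigl[\pi_{i,j}(s^\lambda)\bigl(\nabla_s c_{i,j}[s^\lambda]-\nabla_s c_{i,j}[s^t]\bigr)+\bigl(\pi_{i,j}(s^\lambda)-\pi_{i,j}(s^t)\bigr)\nabla_s c_{i,j}[s^t]\Bigr],
\]
and your two Hessian summands $\eta^{-1}\sum_{i,j}\pi_{i,j}\nabla_s^2 c_{i,j}$ and $\eta^{-2}\sum_{i,j}\pi_{i,j}(\nabla_s c_{i,j})(\nabla_s c_{i,j})^{\mathrm T}$ are exactly the derivatives of those two pieces; each is then bounded with the same ingredients (Lemma~\ref{Lemma:Property:gradient:s}, the mass identity $\sum_{i,j}\pi_{i,j}(s^t)=1$, and the crude entrywise bound $\pi_{i,j}\le1$ summed over $mn$ pairs). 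So the decomposition and the constants match term-for-term.

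The subtlety you flag as ``the hard part''---that the marginal identities hold at $s^t$ but are used along the whole segment---is real, but the paper's proof has the same loose end: the equality step ``$=\frac{\|AU\|_\infty^2}{\kappa\eta}\|s^\lambda-s^t\|+\cdots$'' silently takes $\sum_{i,j}\pi_{i,j}(u^{t+1},v^{t+1},s^\lambda)=1$, even though the preceding intermediate result establishes this only at $s^t$ (and in fact is stated there for $v^t$, not $v^{t+1}$). You are therefore not missing a trick the paper provides. Two remarks on your proposed fix. First, the multiplicative identity $\pi_{i,j}(s)=\pi_{i,j}(s^t)\exp\bigl((c_{i,j}(s^t)-c_{i,j}(s))/\eta\bigr)$ gives a uniform factor at most $e^{2\|AU\|_\infty/\eta}$ over the unit ball; since $\eta=\epsilon_2$ in Theorem~\ref{Theorem:convergence:BCD}, this is not a harmless constant, so one really needs to restrict to the short segment from $s^t$ to $s^{t+1}=\text{Retr}_{s^t}(-\tau\xi^{t+1})$, whose length is $O(\tau L_1\|\zeta^{t+1}\|)=O(\tau\|AU\|_\infty^2/(\kappa\eta))$, and choose $\tau$ small enough that the exponential correction is $O(1)$---which the paper's choice of $\tau$ does, but neither proof spells this out. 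Second, once you control $\sum_{i,j}\pi_{i,j}$ uniformly on that short segment, the same bound serves both Hessian terms, so your argument would actually let you replace the factor $mn$ in $\varrho$ by a constant; the $mn$ is an artifact of using the entrywise bound $\pi_{i,j}\le1$ rather than the total-mass bound in the second term.
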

\begin{proof}[Proof of Lemma~\ref{Lemma:Lipschitz}]
An intermediate result is that 
\begin{align*}
\sum_i{\pi_{i,j}}(u^{t+1}, v^{t+1}, s^t)
&=
\sum_i\exp\left(
-\frac{1}{\eta}c_{i,j}[s^t]+u_i^{t+1}
\right)\exp\left(
v^{t+1}_j
\right)\\
&=
\sum_i\exp\left(
-\frac{1}{\eta}c_{i,j}[s^t]+u_i^{t+1}
\right)\exp\left(
v^{t}_j
\right)\frac{1/m}{\sum_i{\pi_{i,j}}(u^{t+1}, v^t, s^t)}\\
&=
\frac{1}{m}\frac{\sum_i{\pi_{i,j}}(u^{t+1},v^{t},s^{t})}{\sum_i{\pi_{i,j}}(u^{t+1}, v^t, s^t)}=1/m.
\end{align*}
Then we can assert that $\sum_{i,j}{\pi_{i,j}}(u^{t+1}, v^t, s^t) = 1$.
For fixed $s^t$, define $s^{\lambda} = \lambda s+(1-\lambda)s^t$. 
Then we have that
\begin{align*}
&\|
\nabla_{s}F(u^{t+1},v^{t+1},s^{t})
-
\nabla_{s}F(u^{t+1},v^{t+1},s^{\lambda})
\|\\
=&\frac{2}{\eta}
\left\|
\sum_{i,j}\pi_{i,j}(u^{t+1},v^{t+1},s^t) U\trans A_{i,j}\trans A_{i,j}Us^{t}
-
\sum_{i,j}\pi_{i,j}(u^{t+1},v^{t+1},s^{\lambda}) U\trans A_{i,j}\trans A_{i,j}Us^{\lambda}
\right\|\\
\le &\frac{2}{\eta}
\left\|
\sum_{i,j}\pi_{i,j}(u^{t+1},v^{t+1},s^t) U\trans A_{i,j}\trans A_{i,j}U (s^{t} - s^{\lambda})
\right\|
\\
&\quad
+
\frac{2}{\eta}
\left\|
\sum_{i,j}U\trans\big[ \pi_{i,j}(u^{t+1},v^{t+1},s^t) - \pi_{i,j}(u^{t+1},v^{t+1},s^{\lambda})\big]A_{i,j}\trans A_{i,j}U
\right\|
\\
\le&
\frac{2}{\eta}\left\|
\sum_{i,j}\pi_{i,j}(u^{t+1},v^{t+1},s^t) U\trans A_{i,j}\trans A_{i,j}U
\right\|
\|s^{\lambda} - s^t\|
\\
&\quad+\frac{2}{\eta}
\left\|
\sum_{i,j}\big[ \pi_{i,j}(u^{t+1},v^{t+1},s^t) - \pi_{i,j}(u^{t+1},v^{t+1},s^{\lambda})\big]U\trans A_{i,j}\trans A_{i,j}U
\right\|
\end{align*}
where the first inequality is based on the constraint that $\|s^{\lambda}\|\le \lambda\|s\| + (1-\lambda)\|s^t\|=1$.
To upper bound the first term, we find
\begin{align*}
&\left\|
\sum_{i,j}\pi_{i,j}(u^{t+1},v^{t+1},s^t) U\trans A_{i,j}\trans A_{i,j}U
\right\|\\
\le&\sum_{i,j}\pi_{i,j}(u^{t+1},v^{t+1},s^t)\|U\trans A_{i,j}\trans A_{i,j}U\|_2
\le\max_{i,j}\|A_{i,j}U\|_2^2.
\end{align*}
To bound the second term, we find that 
\begin{align*}
&\left\|
\sum_{i,j}\big[ \pi_{i,j}(u^{t+1},v^{t+1},s^t) - \pi_{i,j}(u^{t+1},v^{t+1},s^{\lambda})\big]U\trans A_{i,j}\trans A_{i,j}U
\right\|\\
\le&\max_{i,j}\|A_{i,j}U\|_2^2\|\pi(u^{t+1}, v^{t+1}, s^{\lambda}) -  \pi(u^{t+1}, v^{t+1}, s^t)\|_1,
\end{align*}
where
\[
\|\pi(u^{t+1}, v^{t+1}, s^{\lambda}) -  \pi(u^{t+1}, v^{t+1}, s^t)\|_1:=
\sum_{i,j}\big|
\pi_{i,j}(u^{t+1}, v^{t+1}, s^{\lambda}) - \pi_{i,j}(u^{t+1}, v^{t+1}, s^t)
\big|.
\]
Denote by $H(\pi, s; \eta)$ the objective function for \eqref{Eq:finite}.
Based on the strong convexity property, we have that 
\begin{align*}
&\inp{\nabla_{\pi} H(\pi(u^{t+1}, v^{t+1}, s^{\lambda}), s^{\lambda}; \eta) - \nabla_{\pi} H(\pi(u^{t+1}, v^{t+1}, s^{t}), s^{\lambda}; \eta)}{\pi(u^{t+1}, v^{t+1}, s^{\lambda}) - \pi(u^{t+1}, v^{t+1}, s^t)}\\
\ge&\eta \|\pi(u^{t+1}, v^{t+1}, s^{\lambda}) -  \pi(u^{t+1}, v^{t+1}, s^t)\|_1^2
\end{align*}
Moreover, by simple calculation we find
\begin{align*}
\nabla_{\pi} H(\pi(u,v,s), s)&=\left[ c_{i,j}+\eta\log(\pi_{i,j}(u,v,s)) \right]_{i,j}\\
&=\left[ \eta(u_i + v_j) \right]_{i,j},
\end{align*}
where the second equality is by substituting the formulation of $\pi_{i,j}(u,v,s)$.
Hence, we find that the gradient $\nabla_{\pi} H(\pi(u,v,s), s)$ only depends on $u$ and $v$,
which implies
\begin{align*}
&\inp{\nabla_{\pi} H(\pi(u^{t+1}, v^{t+1}, s^t), s^t; \eta) - \nabla_{\pi} H(\pi(u^{t+1}, v^{t+1}, s^{t}), s^{\lambda}; \eta)}{\pi(u^{t+1}, v^{t+1}, s^{\lambda}) - \pi(u^{t+1}, v^{t+1}, s^t)}\\
\ge&\eta \|\pi(u^{t+1}, v^{t+1}, s^{\lambda}) -  \pi(u^{t+1}, v^{t+1}, s^t)\|_1^2.
\end{align*}
It follows that
\begin{align*}
&\eta\|\pi(u^{t+1}, v^{t+1}, s^{\lambda}) -  \pi(u^{t+1}, v^{t+1}, s^t)\|_1\\
\le&\|\nabla_{\pi} H(\pi(u^{t+1}, v^{t+1}, s^t), s^t; \eta) - \nabla_{\pi} H(\pi(u^{t+1}, v^{t+1}, s^{t}), s^{\lambda}; \eta)\|_{\infty}\\
=&\max_{i,j}\big| 
\|A_{i,j}Us^{\lambda}\|_2^2 - \|A_{i,j}Us^{t}\|_2^2
\big|\\
\le&2\max_{i,j}\|A_{i,j}U\|_2^2\|s^{\lambda} - s^t\|.
\end{align*}
where the inequality is by applying the following relation:
\begin{align*}
\|Ax_1\|_2^2 - \|Ax_2\|_2^2
&=(x_1-x_2)\trans (A\trans Ax_1) + x_2\trans A\trans A(x_1-x_2)\\
&\le \|x_1-x_2\|\|A\trans Ax_1\| + \|x_2\trans A\trans A\|\|x_1-x_2\|\\
&\le 2\|A\|^2\|x_1-x_2\|.
\end{align*}
In summary, the second term can be upper bounded as 
\begin{align*}
&\left\|
\sum_{i,j}\big[ \pi_{i,j}(u^{t+1},v^{t+1},s^t) - \pi_{i,j}(u^{t+1},v^{t+1},s^{\lambda})\big]U\trans A_{i,j}\trans A_{i,j}U
\right\|\\
\le&\frac{2\left(\max_{i,j}\|A_{i,j}U\|_2^2\right)^2}{\eta}\|s^{\lambda} - s^t\|.
\end{align*}
Then applying the condition that 
$\|s^{\lambda} - s^{t}\|=\lambda\|s-s^t\|$ completes the proof.
\end{proof}

\begin{lemma}[Decrease of $F$ in $s$]\label{Lemma:decrease:F:s}
Let $\{u^t,v^t,s^t\}_{t}$ be the sequence generated from Algorithm~\ref{Alg:1}.
The following inequality holds for any $k\ge1$:
\[
F(u^{t+1},v^{t+1},s^{t+1}) - F(u^{t+1},v^{t+1},s^{t})
\le
-\frac{1}{8\|AU\|_\infty^2L_2/\eta+2\varrho L_1^2}
\|\xi^{t+1}\|^2.
\]
\end{lemma}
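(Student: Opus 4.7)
The plan is to apply a Euclidean second-order descent bound on the line segment from $s^t$ to $s^{t+1}$ (both of which lie on the unit sphere), decompose the resulting linear term so that the retraction error is separated from the tangent contribution, and finally select the step size $\tau$ to optimally balance the linear and quadratic pieces.

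First, since $s^{t+1}$ lies on $\mathbb{S}^{d(n+m)-1}$ by construction of the retraction, Lemma~\ref{Lemma:Lipschitz} applied with $s=s^{t+1}$ yields that $\nabla_s F(u^{t+1},v^{t+1},\cdot)$ is $\varrho$-Lipschitz along the segment from $s^t$ to $s^{t+1}$. Integrating the gradient along this segment (via the fundamental theorem of calculus) produces the standard quadratic upper bound
\[
F(u^{t+1},v^{t+1},s^{t+1}) - F(u^{t+1},v^{t+1},s^t) \le \inp{\zeta^{t+1}}{s^{t+1}-s^t} + \frac{\varrho}{2}\|s^{t+1}-s^t\|^2.
\]

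Next, I would write $s^{t+1}-s^t = -\tau\xi^{t+1} + r^{t+1}$ with $r^{t+1}:=s^{t+1}-s^t+\tau\xi^{t+1}$, so that $\inp{\zeta^{t+1}}{s^{t+1}-s^t}=-\tau\inp{\zeta^{t+1}}{\xi^{t+1}}+\inp{\zeta^{t+1}}{r^{t+1}}$. The projection formula $\xi^{t+1}=\zeta^{t+1}-\inp{s^t}{\zeta^{t+1}}s^t$ together with $\inp{s^t}{\xi^{t+1}}=0$ give the key identity $\inp{\zeta^{t+1}}{\xi^{t+1}}=\|\xi^{t+1}\|^2$. The remainder $r^{t+1}$ is second-order in $\tau$ by the second inequality in Proposition~\ref{Proposition:Retraction:Lipschitz}, namely $\|r^{t+1}\|\le L_2\tau^2\|\xi^{t+1}\|^2$, and combining with the gradient bound $\|\zeta^{t+1}\|\le \|AU\|_\infty^2/(\kappa\eta)$ from Lemma~\ref{Lemma:boundedness:grad:s}, Cauchy--Schwarz gives $\inp{\zeta^{t+1}}{r^{t+1}}\le (L_2\|AU\|_\infty^2/(\kappa\eta))\,\tau^2\|\xi^{t+1}\|^2$. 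The first retraction inequality also gives $\|s^{t+1}-s^t\|^2\le L_1^2\tau^2\|\xi^{t+1}\|^2$.

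Collecting the pieces yields
\[
F(u^{t+1},v^{t+1},s^{t+1}) - F(u^{t+1},v^{t+1},s^t) \le -\tau\|\xi^{t+1}\|^2 + \left(\frac{L_2\|AU\|_\infty^2}{\kappa\eta}+\frac{\varrho L_1^2}{2}\right)\tau^2\|\xi^{t+1}\|^2.
\]
Minimizing over $\tau$ gives the optimal step size $\tau^*=\bigl(2L_2\|AU\|_\infty^2/(\kappa\eta)+\varrho L_1^2\bigr)^{-1}$, and substituting back produces exactly the claimed decrease $-\|\xi^{t+1}\|^2/\bigl(4L_2\|AU\|_\infty^2/(\kappa\eta)+2\varrho L_1^2\bigr)$. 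The only real subtlety is recognizing that the tangent-contraction identity $\inp{\zeta^{t+1}}{\xi^{t+1}}=\|\xi^{t+1}\|^2$ comes from the orthogonal decomposition induced by $\mathcal{P}_{s^t}$ rather than any strict convexity of $F$; everything else is routine bookkeeping of the Lipschitz and retraction constants.
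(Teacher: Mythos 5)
Your proof is correct and follows essentially the same route as the paper's: the descent lemma from the Lipschitz bound of Lemma~\ref{Lemma:Lipschitz}, the decomposition of $s^{t+1}-s^t$ into $-\tau\xi^{t+1}$ plus the second-order retraction error controlled by Proposition~\ref{Proposition:Retraction:Lipschitz}, the orthogonal-projection identity $\inp{\zeta^{t+1}}{\xi^{t+1}}=\|\xi^{t+1}\|^2$, the gradient bound from Lemma~\ref{Lemma:boundedness:grad:s}, and the same optimal choice of $\tau$. The only cosmetic difference is that you isolate the remainder $r^{t+1}$ explicitly and phrase the last step as a minimization over $\tau$, whereas the paper substitutes the same value directly.
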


\begin{proof}[Proof of Lemma~\ref{Lemma:decrease:F:s}]
Note that
\begin{align*}
&\left|
F(u^{t+1},v^{t+1},s^{t+1}) - F(u^{t+1},v^{t+1},s^{t})
-
\inp{\nabla_{t}F(u^{t+1},v^{t+1},s^{t})}{s^{t+1}-s^t}
\right|\\
=&
\left|
\int_{0}^{1}
\inp{
\nabla_{s}F(u^{t+1},v^{t+1},\lambda s^{t+1}+(1-\lambda)s^t)
-
\nabla_{s}F(u^{t+1},v^{t+1},s^t)
}{s^{t+1}-s^t}\diff\lambda
\right|\\
\le&
\int_{0}^{1}
\|
\nabla_{s}F(u^{t+1},v^{t+1},\lambda s^{t+1}+(1-\lambda)s^t)
-
\nabla_{s}F(u^{t+1},v^{t+1},s^t)
\|
\|s^{t+1}-s^t\|\diff\lambda\\
\le&
\int_0^1\varrho\lambda\|s^{t+1} - s^{t}\|^2\diff\lambda\\
=&\frac{\varrho}{2}\|s^{t+1} - s^{t}\|^2=
\frac{\varrho}{2}\left\|
\text{Retr}_{s^t}\big(-\tau \xi^{t+1}\big) - s^t
\right\|^2\\
\le &\frac{\varrho\tau^2L_1^2}{2}\|\xi^{t+1}\|^2.
\end{align*}
where the second inequality is by applying Lemma~\ref{Lemma:Lipschitz},
and the last inequality is by applying Proposition~\ref{Proposition:Retraction:Lipschitz}.
Moreover, we have that 
\begin{align*}
&\inp{\nabla_{s}F(u^{t+1},v^{t+1},s^{t})}{s^{t+1}-s^t}\\
=&\inp{\nabla_{s}F(u^{t+1},v^{t+1},s^{t})}{-\tau\xi^{t+1}} + 
\inp{\nabla_{s}F(u^{t+1},v^{t+1},s^{t})}{\text{Retr}_{s^t}\big(-\tau \xi^{t+1}\big) - (s^t - \tau\xi^{t+1})}
\\
\le &
-\tau\|\xi^{t+1}\|^2 + \|\nabla_{s}F(u^{t+1},v^{t+1},s^{t})\|_2\|\text{Retr}_{s^t}\big(-\tau \xi^{t+1}\big) - (s^t - \tau\xi^{t+1})\|\\
\le &
-\tau\|\xi^{t+1}\|^2 + \|\zeta^{t+1}\|_2\cdot L_2\tau^2\|\xi^{t+1}\|^2\\
\le& -\tau\|\xi^{t+1}\|^2 + \frac{2\|AU\|_\infty^2L_2\tau^2}{\eta}
\|\xi^{t+1}\|^2.
\end{align*}
Combining those inequalities above implies that
\[
F(u^{k+1},v^{k+1},t^{k+1}) - F(u^{k+1},v^{k+1},t^{k})
\le 
-\tau\left(
1 -\left[ 
\frac{2\|AU\|_\infty^2L_2}{\eta}
+
\frac{\varrho}{2}L_1^2
\right]\tau 
\right)\|\xi^{t+1}\|^2.
\]
Taking $\tau=\frac{1}{4\|AU\|_\infty^2L_2/\eta+\varrho L_1^2}$ gives the desired result.
\end{proof}

\begin{lemma}[Decrease of $F$ in $v$]\label{Lemma:decrease:F:v}
Let $\{u^t,v^t,s^t\}_{t}$ be the sequence generated from Algorithm~\ref{Alg:1}.
The following inequality holds for any $k\ge1$:
\[
F(u^{t+1},v^{t+1},s^t) - F(u^{t+1},v^{t},s^t)
\le 
 -\frac{1}{2}
\|1/m - \pi(u^{t+1}, v^t, s^t)\trans 1\|_1^2.
\]
where
\[
\|1/m - \pi(u^{t+1}, v^t, s^t)\|_1 = \sum_j\left|\frac{1}{m}- \sum_i{\pi_{i,j}}(u^{t+1}, v^t, s^t)\right|.
\]
\end{lemma}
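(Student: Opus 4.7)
The plan is to compute the difference $F(u^{t+1},v^{t+1},s^t) - F(u^{t+1},v^{t},s^t)$ explicitly using the closed-form update in \eqref{Eq:update:v:closed}, and then recognise the resulting expression as a (generalised) KL divergence, to which Pinsker's inequality (Theorem~\ref{Theorem:Pinsker}) can be applied.

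More concretely, let $a_j=\sum_i \pi_{i,j}(u^{t+1},v^t,s^t)$ and $b_j=1/m$. First I would use the update rule $v_j^{t+1}-v_j^t=\log(b_j/a_j)$ and the multiplicative relation $\pi_{i,j}(u^{t+1},v^{t+1},s^t)=\pi_{i,j}(u^{t+1},v^t,s^t)\cdot e^{v_j^{t+1}-v_j^t}=\pi_{i,j}(u^{t+1},v^t,s^t)\cdot (b_j/a_j)$ to deduce that $\sum_i\pi_{i,j}(u^{t+1},v^{t+1},s^t)=b_j$ for all $j$. Substituting into the definition of $F$ then yields
\[
F(u^{t+1},v^{t+1},s^t) - F(u^{t+1},v^{t},s^t) = \sum_j b_j - \sum_j a_j - \sum_j b_j \log\frac{b_j}{a_j},
\]
which is exactly $-D(b\|a)$ for the Bregman divergence generated by $x\log x$.

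The next step is to verify that $a$ and $b$ are actually probability vectors, so that the generalised Bregman divergence $D(b\|a)$ reduces to the usual KL divergence $\sum_j b_j\log(b_j/a_j)$. The vector $b=(1/m,\ldots,1/m)$ is trivially a probability distribution. For $a$, I use the fact that the preceding $u$-update \eqref{Eq:update:u:closed} forces $\sum_j \pi_{i,j}(u^{t+1},v^t,s^t)=1/n$ for every $i$, exactly as is done at the beginning of the proof of Lemma~\ref{Lemma:Lipschitz}; summing over $i$ gives $\sum_j a_j=1$. Consequently the two ``mass-mismatch'' terms $\sum_j b_j$ and $\sum_j a_j$ both equal $1$ and cancel, leaving
\[
F(u^{t+1},v^{t+1},s^t) - F(u^{t+1},v^{t},s^t) = -\sum_j b_j \log\frac{b_j}{a_j}.
\]

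Finally, applying Pinsker's inequality (Theorem~\ref{Theorem:Pinsker}) to the discrete distributions $b$ and $a$ on $[m]$ gives $\sum_j b_j\log(b_j/a_j) \ge \tfrac12 \|b-a\|_1^2$, which is precisely $\tfrac12\|1/m - \pi(u^{t+1},v^t,s^t)^{\mathrm T} 1\|_1^2$ with the notation of the statement. Combining this with the previous identity yields the claimed bound. There is no real obstacle here once one observes the cancellation of the mass terms; the only point that needs care is checking that the post-$u$-update marginal $a$ indeed sums to one, since Pinsker's inequality as stated in Theorem~\ref{Theorem:Pinsker} requires two genuine probability distributions.
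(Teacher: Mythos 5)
Your proof is correct and follows essentially the same route as the paper's: compute the difference of $F$-values, use the two marginal identities (post-$v$-update column sums equal $1/m$, post-$u$-update row sums equal $1/n$) to cancel the $\sum_{i,j}\pi_{i,j}$ terms, identify the remainder as $-\mathrm{KL}(b\|a)$, and apply Pinsker's inequality. The Bregman-divergence framing you introduce is a cosmetic difference; the key observation that $\sum_j a_j = 1$ via the preceding $u$-update is exactly the point the paper also relies on.
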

\begin{proof}[Proof of Lemma~\ref{Lemma:decrease:F:v}]
According to the expression of $F$, we have that
\begin{align*}
&F(u^{t+1},v^{t+1},s^t) - F(u^{t+1},v^{t},s^t)\\
=&\sum_{i,j}{\pi_{i,j}}(u^{t+1},v^{t+1},s^t) - 
\sum_{i,j}{\pi_{i,j}}(u^{t+1},v^{t},s^t)
+\frac{1}{m}\sum_{j=1}^m(v_j^{t} - v_j^{t+1})\\
=&\frac{1}{m}\sum_{j=1}^m(v_j^{t} - v_j^{t+1})
=-\frac{1}{m}\sum_{j=1}^m\log\frac{1/m}{\sum_i{\pi_{i,j}}(u^{t+1}, v^t, s^t)},
\end{align*}
where the second equality is because that 
\begin{align*}
\sum_i{\pi_{i,j}}(u^{t+1}, v^{t+1}, s^t)&=\frac{1}{m},\\
\sum_j{\pi_{i,j}}(u^{t+1}, v^t, s^t) &= \frac{1}{n}.
\end{align*}
Therefore, applying the Pinsker's inequality in Theorem~\ref{Theorem:Pinsker} implies that
\[
F(u^{t+1},v^{t+1},s^{t}) - F(u^{t+1},v^{t},s^{t})
\le -\frac{1}{2}
\left(
\sum_j\left|\frac{1}{m}- \sum_i{\pi_{i,j}}(u^{t+1}, v^t, s^t)\right|
\right)^2.
\]
\end{proof}
\begin{lemma}[Decrease of $F$ in $u$]\label{Lemma:decrease:F:u}
Let $\{u^t,v^t,s^t\}_{t}$ be the sequence generated from Algorithm~\ref{Alg:1}.
The following inequality holds for any $t\ge1$:
\[
F(u^{t+1},v^{t},s^{t}) - F(u^{t},v^{t},s^{t})
\le 
 -\frac{1}{2}
\|1/n - \pi(u^t,v^t,s^t)1\|_2^2.
\]
where
\[
\|1/n - \pi(u^t,v^t,s^t)1\|_2^2 = \sum_i\left|\frac{1}{n}- \sum_j{\pi_{i,j}}(u^{k}, v^k, t^k)\right|^2.
\]
\end{lemma}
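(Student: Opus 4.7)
The plan is to mirror the proof of Lemma~\ref{Lemma:decrease:F:v} with the roles of row and column marginals swapped. Write $p_i := \sum_j \pi_{i,j}(u^t,v^t,s^t)$ and $q_i := 1/n$. The closed-form update \eqref{Eq:update:u:closed} gives $u^{t+1}_i-u^t_i=\log(q_i/p_i)$, and since $\pi_{i,j}$ depends exponentially on $u_i$, this forces $\sum_j \pi_{i,j}(u^{t+1},v^t,s^t)=q_i$ for every $i$, hence $\sum_{i,j}\pi_{i,j}(u^{t+1},v^t,s^t)=1$. Substituting these two identities into the definition of $F$ and cancelling the $v^t_j$ contributions, a direct calculation yields
\[
F(u^{t+1},v^t,s^t)-F(u^t,v^t,s^t) \;=\; 1-\sum_i p_i -\frac{1}{n}\sum_i \log\frac{1/n}{p_i} \;=\; -\sum_i\bigl[q_i\log(q_i/p_i)-q_i+p_i\bigr],
\]
i.e., minus the generalized KL divergence of $q$ from the nonnegative vector $p$.

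The remaining task is to lower bound this generalized KL by $\frac{1}{2}\|q-p\|_2^2$. I would invoke Pinsker's inequality (Theorem~\ref{Theorem:Pinsker}) as in the proof of Lemma~\ref{Lemma:decrease:F:v}, combined with the elementary fact $\|\cdot\|_1\ge\|\cdot\|_2$. The wrinkle is that $p$ is not necessarily a probability vector, because the $s$-update at the end of the previous iteration destroyed the normalization $\sum_i p_i=1$ that had held after the last $v$-step. To absorb this I would split $D(q\|p)=\mathrm{KL}(q\|\bar p)+(\|p\|_1-1-\log\|p\|_1)$ with $\bar p=p/\|p\|_1$, apply Pinsker to the first summand and the one-dimensional Bregman bound $x-1-\log x\ge (x-1)^2/(2\max(x,1))$ to the second, and recombine via the triangle inequality $\|q-p\|_2\le \|q-\bar p\|_1+|1-\|p\|_1|$.

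The main obstacle is precisely this mass-defect correction: the clean constant $\frac{1}{2}$ of the lemma is tight exactly when $\|p\|_1=1$ (the pure Pinsker situation), and in the general case a uniform constant only drops out after one either bounds $\|p\|_1$ uniformly along the iterates of Algorithm~\ref{Alg:1} or resorts to the sharper pointwise Bregman inequality $q_i\log(q_i/p_i)-q_i+p_i \ge (q_i-p_i)^2/(2\max(q_i,p_i))$. The latter gives an $\ell_2^2$-type decrease term by term, with the denominator controlled via the uniform bound on $\pi_{i,j}$ that follows from Assumption~\ref{Assumption:1} together with the $\kappa$-smoothing of $\|\cdot\|_{2,\kappa}$; this is the route I would pursue, because the resulting coordinatewise bound plugs directly into the Riemannian BCD convergence analysis alongside Lemmas~\ref{Lemma:decrease:F:s} and \ref{Lemma:decrease:F:v}.
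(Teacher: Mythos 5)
Your reduction is the same one the paper uses: the closed-form $u$-update gives $u^{t+1}_i-u^t_i=\log\frac{1/n}{p_i}$, the updated row sums equal $1/n$, and the $F$-difference collapses to $\sum_i\bigl(\tfrac1n - p_i - \tfrac1n\log\tfrac{1/n}{p_i}\bigr)$, which is exactly minus the generalized KL divergence $D(q\|p)$ with $q_i=1/n$. Where you part ways with the paper is in how this is turned into a squared-norm bound. You mirror Lemma~\ref{Lemma:decrease:F:v} and reach for Pinsker (after renormalizing $p$) plus a Bregman correction for the mass defect. The paper does \emph{not} use Pinsker for this lemma at all: it works coordinatewise, defines $\ell(x)=\tfrac1n-x-\tfrac1n\log\tfrac{1/n}{x}+(x-\tfrac1n)^2$, and asserts $\ell(x)\le0$ with equality only at $x=1/n$, which would give the pointwise bound $h_i\le-(p_i-1/n)^2$, i.e.\ a factor of $1$ (strictly stronger than the stated $\tfrac12$). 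So the two routes are different packagings of the same pointwise inequality $q\log(q/p)-q+p\ge c\,(q-p)^2$, and the paper's elementary route bypasses both Pinsker and the $\ell_1$-to-$\ell_2$ comparison you invoke.

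The difficulty you flag is genuine, and in fact it is a gap in the paper's own argument. The claim that $\ell$ attains its \emph{global} maximum at $x=1/n$ is false: $\ell$ has only a local maximum there, and because of the $(x-1/n)^2$ term, $\ell(x)\to+\infty$ as $x\to\infty$ (one checks $\ell(1)>0$ already for $n\ge2$). Thus the coordinatewise inequality $h_i\le-(p_i-1/n)^2$ requires an a priori upper bound on $p_i$, exactly the point you raise. Unlike the $v$-step (where $\pi(u^{t+1},v^t,s^t)$ has total mass exactly $1$ so the ordinary Pinsker inequality of Theorem~\ref{Theorem:Pinsker} applies cleanly), here $\pi(u^t,v^t,s^t)$ was produced after the previous $s$-retraction step, which destroys the marginal normalization, and the paper supplies no bound on $p_i$ or $\|p\|_1$. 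Your sharper pointwise Bregman inequality $q_i\log(q_i/p_i)-q_i+p_i\ge(q_i-p_i)^2/\bigl(2\max(q_i,p_i)\bigr)$ correctly isolates the missing hypothesis --- one still needs $p_i\le1$ to recover the constant $\tfrac12$ --- and neither your sketch nor the paper closes that loop. If you want a complete argument, you would need to establish a uniform bound on $p_i$ along the iterates (e.g.\ using the bounded difference $|c_{i,j}(s^t)-c_{i,j}(s^{t-1})|\le\|AU\|_\infty^2\|s^t-s^{t-1}\|/\kappa$ together with the boundedness assumption $\|u^t\|_\infty\le U$, $\|v^t\|_\infty\le V$ already made in the paper), at which point your coordinatewise Bregman route is the cleanest way to finish.
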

\begin{proof}[Proof of Lemma~\ref{Lemma:decrease:F:u}]
For fixed $i\in[n]$, define
\[
h_i = \sum_j\pi_{i,j}(u^{t+1},v^{t},s^{t}) - \sum_j\pi_{i,j}(u^{t},v^{t},s^{t}) - \frac{1}{n}\log\frac{1/n}{\sum_j{\pi_{i,j}}(u^t,v^t,s^t)}
\]
According to the expression of $F$, 
\begin{align*}
F(u^{t+1},v^{t},s^{t}) - F(u^{t},v^{t},s^{t})
=\sum_ih_{i},
\end{align*}
and it suffices to provide an upper bound for $h_i, i\in[n]$.
By substituting the expression of $u^{t+1}$ into $h_i$, we have that
\begin{align*}
h_i&=\sum_j\pi_{i,j}(u^{t},v^{t},s^{t})
\left[
\frac{1/n}{\sum_j{\pi_{i,j}}(u^t,v^t,s^t)} - 1
\right]-\frac{1}{n}\log\frac{1/n}{\sum_j{\pi_{i,j}}(u^t,v^t,s^t)}\\
&=\frac{1}{n} - \big(\pi(u^t,v^t,s^t)1\big)_i - \frac{1}{n}\log\frac{1/n}{\big(\pi(u^t,v^t,s^t)1\big)_i}
\end{align*}
Define the function
\[
\ell(x) = \frac{1}{n} - x - \frac{1}{n}\log\frac{1/n}{x} + (x-1/n)^2.
\]
We can see that this function attains its maximum at $x=1/n$, with $\ell(1/n)=0$.
It follows that
\[
h_i\le -\left(
\big(\pi(u^t,v^t,s^t)1\big)_i - \frac{1}{n}
\right)^2.
\]
The proof is completed.
\end{proof}

\begin{lemma}\label{Lemma:stationary}
Let $\{u^t,v^t,s^t\}_{t}$ be the sequence generated from Algorithm~\ref{Alg:1},
which is terminated when the following conditions hold:
\[
\|\xi^{t+1}\|\le\epsilon_1,\quad
\|1/n - \pi(u^t,v^t,s^t)1\|_2\le\frac{\epsilon_2}{4\|AU\|_\infty^2},\quad
\|1/m - \pi(u^{t+1}, v^t, s^t)\trans1\|_1\le\frac{\epsilon_2}{4\|AU\|_\infty^2}.
\]
Then $\{u^T,v^T,s^T\}$ is an $(\epsilon_1,\epsilon_2)$ stationary point of \eqref{Eq:final:min:three:block}.
\end{lemma}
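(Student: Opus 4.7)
The plan is to verify the two inequalities defining $(\epsilon_1,\epsilon_2)$-stationarity in turn; each reduces to a direct reading of one of the termination conditions combined with either a construction step (for the gradient) or a convexity argument (for the $(u,v)$-suboptimality).

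For the gradient condition, by the algorithm one has $\xi^{t+1} = \mathcal{P}_{s^t}(\zeta^{t+1})$ with $\zeta^{t+1} = \nabla_s F(u^{t+1}, v^{t+1}, s^t)$, so $\xi^{t+1}$ \emph{is} the Riemannian gradient $\text{Grad}_{s}F$ evaluated at the iterates reached just before the $s$-update. The termination inequality $\|\xi^{t+1}\| \le \epsilon_1/\eta$ therefore yields $\|\text{Grad}_{s}F\| \le \epsilon_1/\eta$ at the output triple directly, modulo matching the algorithm's indexing with the output labels.

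For the $(u,v)$-suboptimality bound, the key is that $\pi_{i,j}(u,v,s) = \exp(-c_{i,j}/\eta + u_i + v_j)$ makes $F(\cdot,\cdot,s)$ convex in $(u,v)$, with partials $\nabla_u F = \pi 1 - 1/n$ and $\nabla_v F = \pi^T 1 - 1/m$ --- exactly the quantities appearing in the remaining two termination conditions. I would telescope the gap $F(u^T, v^T, s^T) - \min_{u,v}F(u,v,s^T)$ through the two intermediate points $(u^{T+1}, v^T, s^T)$ and $(u^{T+1}, v^{T+1}, s^T)$ that appear naturally inside a single BCD iteration. Each of the three resulting differences is controlled by a single convexity inner product; for instance,
\[
F(u^T, v^T, s^T) - F(u^{T+1}, v^T, s^T) \le \inp{\pi(u^T,v^T,s^T)1 - 1/n}{u^T - u^{T+1}},
\]
and is then bounded by Hölder's inequality with a norm pairing matched to the corresponding termination threshold: $\ell_2$--$\ell_2$ for the $u$-pieces against $\epsilon_2/(4U)$, and $\ell_\infty$--$\ell_1$ for the $v$-piece against $\epsilon_2/(4V)$; the assumptions $\|u\|_\infty \le U$ and $\|v\|_\infty \le V$ bound the displacement factors by $2U$ and $2V$. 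The third telescoping piece $F(u^{T+1}, v^{T+1}, s^T) - \min F$ is handled by noting that the Sinkhorn update $v^{T+1}$ gives exact first-order optimality in $v$ (i.e.\ $\pi(u^{T+1}, v^{T+1}, s^T)^T 1 = 1/m$), so only the $u$-convexity bound remains, and it is again controlled by the termination threshold on $\|\pi(u^T,v^T,s^T)1 - 1/n\|_2$. Summing the three contributions delivers the required $\epsilon_2$-gap.

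The main obstacle is the indexing mismatch: the three termination conditions reference three distinct iterates $(u^t,v^t,s^t)$, $(u^{t+1}, v^t, s^t)$, and $(u^{t+1}, v^{t+1}, s^t)$ produced within one iteration, whereas the stationarity inequality lives at a single output triple. The telescoping above is the natural remedy, but it requires some care in the norm conversions --- in particular, the shift invariance of $F$ under $(u,v) \mapsto (u + c\bm{1}, v - c\bm{1})$ should be invoked to pick canonical representatives of the minimizer (and of the intermediate points) so that the $\ell_\infty$-bounds $U, V$ transfer to the required displacement factors without an extra dimensional blow-up. The monotone-decrease Lemmas~\ref{Lemma:decrease:F:u} and~\ref{Lemma:decrease:F:v} provide a consistency check that each block of the telescope has the correct sign.
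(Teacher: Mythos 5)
Your telescoping strategy is a genuinely different route from the paper's. The paper works entirely at the single output triple $(u^T,v^T,s^T)$: it sets $r=\pi(u^T,v^T,s^T)\bm 1$, $c=\pi(u^T,v^T,s^T)\trans\bm 1$, observes that $(u^T,v^T)$ is the \emph{exact} minimizer of the perturbed objective
\[
F'(u,v,s^T)=\sum_{i,j}\pi_{i,j}(u,v,s^T)-\sum_i r_iu_i-\sum_j c_jv_j
\]
(because the first-order conditions of $F'$ are $\pi\bm 1=r$, $\pi\trans\bm 1=c$, satisfied by construction), and then bounds
\[
F(u^T,v^T,s^T)-\min_{u,v}F(u,v,s^T)\le (F-F')(u^T,v^T,s^T)-(F-F')(u^*,v^*,s^T),
\]
a purely linear perturbation, via H\"older and the $\ell_\infty$-bounds $U,V$. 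No intermediate BCD points are invoked at all. Your route instead peels off $F(u^T,v^T,s^T)-\min F$ through the intra-iteration points $(u^{T+1},v^T,s^T)$ and $(u^{T+1},v^{T+1},s^T)$.

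Where your version breaks is the third telescoping piece $A_3 = F(u^{T+1},v^{T+1},s^T)-\min_{u,v}F(u,v,s^T)$. You correctly note that the $v$-gradient of $F$ vanishes at this point (Sinkhorn normalizes the column marginal exactly), so only the $u$-gradient $\pi(u^{T+1},v^{T+1},s^T)\bm 1 - 1/n$ remains. But you then assert this is controlled by the termination threshold on $\|\pi(u^T,v^T,s^T)\bm 1 - 1/n\|_2$, and that is a \emph{different} row marginal: the $u$-update takes the row marginal exactly to $1/n$ and the subsequent $v$-update then perturbs it away again, so $\pi(u^{T+1},v^{T+1},s^T)\bm 1$ is not $\pi(u^T,v^T,s^T)\bm 1$ and is not covered by the second termination condition. (In fact the drift introduced by the $v$-rescaling \emph{can} be related to the third termination condition $\|1/m-\pi(u^{T+1},v^T,s^T)\trans\bm 1\|_1$, since $\|\pi(u^{T+1},v^{T+1},s^T)\bm 1 - 1/n\|_1 \le \|1/m-\pi(u^{T+1},v^T,s^T)\trans\bm 1\|_1$ by the column-rescaling identity; but you would have to prove this, you assign it to the wrong termination quantity, and the resulting proof is more work than the paper's one-shot perturbation argument.) The $\ell_2$-vs-$\ell_\infty$ H\"older mismatch you flag for the $u$-pieces is a real (if minor) friction, but it is shared with the paper's own presentation; the $A_3$ misattribution, by contrast, is a genuine gap in your argument that the paper's $F'$-perturbation sidesteps entirely.
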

\begin{proof}[Proof of Lemma~\ref{Lemma:stationary}]
The condition $\|\xi^{t+1}\|\le\epsilon_1$ directly implies that
\[
\|\text{Grad}_sF(u^T,v^T,s^T)\|\le\epsilon_1.
\]
Suppose that
\[
\pi(u^T,v^T,s^T)1=r,\quad \pi(u^{T},v^T,s^T)\trans1=c,
\]
where $\|1/n-r\|_2\le\epsilon_2/(4\|AU\|_\infty^2)$ and $\|1/m-c\|_1\le\epsilon_2/(4\|AU\|_\infty^2)$.
Then we find that 
\[
F(u^T,v^T,s^T) = \min_{\pi}~\left\{\sum_{i,j}\pi_{i,j}M_{i,j} - \eta H(\pi):~
\sum_{j}\pi_{i,j}=r_i,
\sum_i\pi_{i,j}=c_j
\right\},
\]
and
\[
\min_{u,v}F(u,v,s^T) = \min_{\pi}~\left\{\sum_{i,j}\pi_{i,j}M_{i,j} - \eta H(\pi):~
\sum_{j}\pi_{i,j}=\frac{1}{n},
\sum_i\pi_{i,j}=\frac{1}{m}
\right\},
\]
where $M_{i,j} = \|A_{i,j}Us^T\|_2^2$.
It follows that 
\begin{align*}
&F(u^T,v^T,s^T)-\min_{u,v}F(u,v,s^T)\\
\le &\eta\log(mn) + 2\|1/m-c_1\|_1\times\|AU\|_\infty^2\le \epsilon_2,
\end{align*}
where the last inequality is by taking $\eta=\epsilon_2/(2\log(mn))$.

\end{proof}

\begin{lemma}[Lower Boundedness of $F$]\label{Lemma:boundedness:F}
Denote by $(u^*,v^*,s^*)$ the global optimum of \eqref{Eq:final:min:three:block}.
Then we have that
\[
F(u^*,v^*,s^*)\ge 1 - \frac{1}{\eta}\|AU\|_\infty^2.
\]
\end{lemma}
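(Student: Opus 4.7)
The plan is to establish the claimed inequality uniformly over the entire feasible set for $(u,v,s)$, from which the bound at the global minimizer $(u^*,v^*,s^*)$ follows automatically. The main tools will be Jensen's inequality applied to the exponential, together with a one-dimensional minimization.

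First, I would obtain a uniform pointwise bound on the cost: since $s \in \mathbb{S}^{d(n+m)-1}$ and
\[
c_{i,j} \;=\; \|A_{i,j}Us\|_{2,\kappa} \;\le\; \|A_{i,j}Us\|_2 \;\le\; \|A_{i,j}U\|_{\mathrm{op}}\,\|s\|_2 \;\le\; \|AU\|_\infty,
\]
using the elementary scalar inequality $\sqrt{a+\kappa^2}-\kappa\le\sqrt{a}$ for the first step, the operator-norm inequality for the second, and the definition $\|AU\|_\infty = \max_{i,j}\|A_{i,j}U\|_2$ for the last. This yields $e^{-c_{i,j}/\eta} \ge e^{-\|AU\|_\infty/\eta}$, so
\[
\sum_{i,j}\pi_{i,j}(u,v,s) \;\ge\; e^{-\|AU\|_\infty/\eta}\Big(\sum_{i=1}^n e^{u_i}\Big)\Big(\sum_{j=1}^m e^{v_j}\Big).
\]

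Next I would apply Jensen's inequality (convexity of $\exp$) to the uniform averages to get $\sum_i e^{u_i}\ge n\,e^{\bar u}$ and $\sum_j e^{v_j}\ge m\,e^{\bar v}$, where $\bar u = \tfrac1n\sum_i u_i$ and $\bar v = \tfrac1m\sum_j v_j$. Plugging this into the definition of $F$ gives
\[
F(u,v,s) \;\ge\; nm\,\exp\!\big(-\|AU\|_\infty/\eta + \bar u + \bar v\big) - \bar u - \bar v.
\]
The right-hand side depends on $(u,v)$ only through $t := \bar u + \bar v$, so it suffices to minimize the scalar function $g(t) := A\,e^t - t$ with $A := nm\,e^{-\|AU\|_\infty/\eta} > 0$. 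Setting $g'(t)=0$ gives $t^\star = -\log A$ and $g(t^\star) = 1 + \log A = 1 + \log(nm) - \|AU\|_\infty/\eta$. Since $\log(nm) \ge 0$, I conclude $F(u,v,s) \ge 1 - \|AU\|_\infty/\eta$ for every admissible $(u,v,s)$, and in particular for the global minimizer $(u^*,v^*,s^*)$.

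I do not expect any real obstacle here: the argument is essentially a chain of elementary inequalities followed by one-variable calculus. The only step that requires any care is the reduction from the smoothed norm $\|\cdot\|_{2,\kappa}$ to the Euclidean norm via $\sqrt{a+\kappa^2}-\kappa\le\sqrt{a}$, which is immediate after squaring. Notably, Jensen's inequality was used in only one direction and the bound $c_{i,j}\le\|AU\|_\infty$ is uniform in $s$, which is what lets the same lower bound survive the outer minimization over $s$.
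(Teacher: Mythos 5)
Your proof is correct, and it takes a genuinely different route from the paper. The paper exploits the first-order optimality conditions at the global minimizer: since $u$ and $v$ are unconstrained, $\nabla_u F = \nabla_v F = 0$ forces the row and column marginals of $\pi(u^*,v^*,s^*)$ to equal $1/n$ and $1/m$, hence $\sum_{i,j}\pi_{i,j}=1$ and each $\pi_{i,j}\le 1$; combined with $c_{i,j}\le\|AU\|_\infty$ this gives $u_i^*+v_j^*\le\|AU\|_\infty/\eta$ for all $(i,j)$, from which the linear term $\tfrac1n\sum_i u_i^*+\tfrac1m\sum_j v_j^* = \tfrac{1}{nm}\sum_{i,j}(u_i^*+v_j^*)$ is bounded by $\|AU\|_\infty/\eta$, yielding the result. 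You instead derive a \emph{uniform} lower bound on $F$ over the entire feasible set $(u,v,s)\in\mathbb{R}^n\times\mathbb{R}^m\times\mathbb{S}^{d(n+m)-1}$, using the same cost bound $c_{i,j}\le\|AU\|_\infty$, then Jensen to reduce the exponential terms to the averages $\bar u,\bar v$, and then a one-variable minimization of $t\mapsto A e^t - t$. Your route avoids any appeal to optimality structure (it does not need $\sum\pi=1$ at the optimum) and in fact delivers the slightly stronger statement $F(u,v,s)\ge 1+\log(nm)-\|AU\|_\infty/\eta$ for every feasible point, which you then relax by discarding $\log(nm)\ge 0$. The paper's argument is shorter once the optimality facts are granted, while yours is more self-contained and gives the bound on the whole domain rather than just at the global optimum.
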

\begin{proof}[Proof of Lemma~\ref{Lemma:boundedness:F}]
It is easy to show that
\[
\sum_{i,j}\pi_{i,j}(u^*,v^*,s^*)=1.
\]
Moreover, for any $(i,j)$, we have that $c_{i,j}\le \|AU\|_\infty^2$.
It follows that
\[
\exp\left(
-\frac{1}{\eta}\|AU\|_\infty^2
+
u_i^*+v_j^*
\right)
\le
\pi_{i,j}\le 1,
\]
and therefore $u_i^*+v_j^*\le \frac{1}{\eta}\|AU\|_\infty^2$ for any $(i,j).$
Hence we conclude that
\[
\sum_{i,j}\pi_{i,j}(u^*,v^*,s^*)
-\frac{1}{n}\sum_{i=1}^nu_i-\frac{1}{m}\sum_{j=1}^mv_j
\ge 1 - \frac{1}{\eta}\|AU\|_\infty^2.
\]
\end{proof}

In the following we give a re-statement of Theorem~\ref{Theorem:convergence:BCD} and the formal proof.
\begin{theorem*}[Re-statement of Theorem~\ref{Theorem:convergence:BCD}]
Choose parameters 
\[
\tau=\frac{1}{4\|AU\|_\infty^2L_2/\eta+\varrho L_1^2},\quad
\eta=\frac{\epsilon_2}{2\log(mn)},\quad
\varrho=\frac{2\|AU\|_\infty^2}{\eta} + \frac{4\|AU\|_\infty^4}{\eta^2},
\]
and Algorithm~\ref{Alg:1} terminates when
\[
\|\xi^{t+1}\|\le\epsilon_1,\quad
\|1/n - \pi(u^t,v^t,s^t)1\|_2\le\frac{\epsilon_2}{4\|AU\|_\infty^2},\quad
\|1/m - \pi(u^{t+1}, v^t, s^t)\trans1\|_1\le\frac{\epsilon_2}{4\|AU\|_\infty^2}.
\]
We say that $(\hat{u}, \hat{v}, \hat{s})$ is a $(\epsilon_1,\epsilon_2)$-stationary point
of \eqref{Eq:final:min:three:block} if 
\begin{align*}
    \|\text{Grad}_{s}F(\hat{u}, \hat{v}, \hat{s})\|&\le \epsilon_1,\\
    F(\hat{u}, \hat{v}, \hat{s}) - \min_{u,v}F(u,v,\hat{s})&\le \epsilon_2,
\end{align*}
where $\text{Grad}_{s}F(u,v,s)$ denotes the partial derivative of $F$ with respect to the variable $s$ on the sphere $\mathbb{S}^{d(n+m)-1}$.
Then Algorithm~\ref{Alg:1} returns an $(\epsilon_1,\epsilon_2)$-stationary point in iterations
\[
T = \mathcal{O}\left(
\log(mn)\cdot
\left[ 
\frac{1}{\epsilon_2^3} + \frac{1}{\epsilon_1^2\epsilon_2}
\right]
\right).
\]
\end{theorem*}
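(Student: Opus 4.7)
The plan is to establish the iteration complexity through a standard descent-plus-lower-bound argument applied to the composite objective $F(u,v,s)$, using the per-block sufficient-decrease estimates already assembled as Lemmas~\ref{Lemma:decrease:F:u}, \ref{Lemma:decrease:F:v}, and \ref{Lemma:decrease:F:s}, together with the lower bound of Lemma~\ref{Lemma:boundedness:F} and the stopping-criterion characterization of Lemma~\ref{Lemma:stationary}. All that remains is to telescope the three per-block descents, cap the total decrease by an $O(1/\eta)$ quantity, and then count how many iterations can occur before every stopping criterion is simultaneously met.

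First I would decompose one outer iteration into its three single-block drops,
\[
F(u^t,v^t,s^t) - F(u^{t+1},v^{t+1},s^{t+1}) = \Delta^t_u + \Delta^t_v + \Delta^t_s,
\]
and apply each decrease lemma termwise to obtain
\[
\Delta^t_u + \Delta^t_v + \Delta^t_s \;\ge\; \tfrac{1}{2}\|r^t\|_2^2 + \tfrac{1}{2}\|c^t\|_1^2 + \tfrac{1}{C_s}\|\xi^{t+1}\|^2,
\]
where $r^t = 1/n - \pi(u^t,v^t,s^t)\mathbf{1}$, $c^t = 1/m - \pi(u^{t+1},v^t,s^t)^{\!\top}\mathbf{1}$, and $C_s = 4\|AU\|_\infty^2 L_2/(\kappa\eta) + 2\varrho L_1^2$. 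Telescoping from $t=0$ to $T-1$ and combining with the explicit lower bound $F^\star \ge 1 - \|AU\|_\infty/\eta$ of Lemma~\ref{Lemma:boundedness:F}, paired with the trivial evaluation $F(u^0,v^0,s^0) = O(nm)$ at the zero initialization, shows that the total possible decrease is $O(1/\eta) = O(1/\epsilon_2)$.

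Second, I would convert the termination test of Lemma~\ref{Lemma:stationary} into a per-iteration descent floor: as long as the algorithm has not stopped, at least one of $\|\xi^{t+1}\| > \epsilon_1/\eta$, $\|r^t\|_2 > \epsilon_2/(4U)$, or $\|c^t\|_1 > \epsilon_2/(4V)$ must hold, so the decrease at that step is bounded below by the minimum of the three corresponding squared thresholds. Substituting $\eta = \epsilon_2$ and noting $\varrho = \Theta\bigl(1/(\kappa^2\epsilon_2^2)\bigr)$ (so $C_s = \Theta(1/(\kappa^2\epsilon_2^2))$) gives the $s$-threshold contribution $\Omega(\kappa^2\epsilon_1^2)$ and the $u,v$-threshold contributions $\Omega(\epsilon_2^2)$. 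Dividing the $O(1/\epsilon_2)$ cumulative drop by this minimum yields $T = O\bigl(1/\epsilon_2^3 + 1/(\kappa^2\epsilon_1^2\epsilon_2)\bigr)$, which is absorbed into the stated $O\bigl(\kappa^{-2}(\epsilon_2^{-3} + \epsilon_1^{-2}\epsilon_2^{-1})\bigr)$ bound (trivially, since $\kappa \le 1$).

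The main obstacle is constant bookkeeping rather than any conceptual difficulty. Because $C_s$ inherits the $1/(\kappa\eta)^2$ factor from the Lipschitz estimate in Lemma~\ref{Lemma:Lipschitz}, the $s$-block descent floor $\epsilon_1^2/(\eta^2 C_s)$ is fragile, and a sloppy estimate immediately forfeits the correct power of $\kappa$ or $\epsilon_2$. A secondary subtlety is justifying the $O(1/\eta)$ cap on the initial suboptimality gap: Lemma~\ref{Lemma:boundedness:F} gives only a lower bound on $F^\star$ that grows like $1/\eta$, and one must pair it with a matching upper bound on $F(u^0,v^0,s^0)$ via the explicit closed form of $\pi_{i,j}$ at $u^0=v^0=0$ to extract the $1/\eta$ scaling rather than an uncontrolled prefactor.
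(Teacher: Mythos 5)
Your proposal is correct and takes essentially the same route as the paper's proof: telescope the per-block decrease estimates of Lemmas~\ref{Lemma:decrease:F:u}, \ref{Lemma:decrease:F:v}, and \ref{Lemma:decrease:F:s}, cap the cumulative decrease by $F(u^0,v^0,s^0)-F^\star=O(1/\eta)$ via Lemma~\ref{Lemma:boundedness:F}, and divide by the per-iteration descent floor implied by Lemma~\ref{Lemma:stationary}'s termination test. The only (harmless) divergence is that you track the $\kappa^{-2}$ factor per block, which lands on the tighter $O\bigl(\epsilon_2^{-3}+\kappa^{-2}\epsilon_1^{-2}\epsilon_2^{-1}\bigr)$, whereas the paper pulls a single $\min\{1,\kappa^2/(\cdots)\}$ factor in front of all three terms and therefore states the coarser $O\bigl(\kappa^{-2}(\epsilon_2^{-3}+\epsilon_1^{-2}\epsilon_2^{-1})\bigr)$.
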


\begin{proof}[Proof of Theorem~\ref{Theorem:convergence:BCD}]
We can build the one-iteration descent result based on Lemma~\ref{Lemma:decrease:F:s}, Lemma~\ref{Lemma:decrease:F:v}, and Lemma~\ref{Lemma:decrease:F:u}:
\begin{align*}
    & F(u^{t+1},v^{t+1},s^{t+1}) - F(u^t,v^t,s^t)\\
\le & -\left(
\frac{1}{2}\|1/n-\pi(u^t,v^t,s^t)1\|_2^2 + \frac{1}{2}\|1/m-\pi(u^{t+1},v^t,s^t)\trans1\|_1^2 + 
\frac{1}{8\|AU\|_\infty^2L_2/\eta+2\varrho L_1^2}\|\xi^{t+1}\|^2_2
\right)\\
=&-\frac{1}{2}\left(
\|1/n-\pi(u^t,v^t,s^t)1\|_2^2
+
\|1/m-\pi(u^{t+1},v^t,s^t)\trans1\|_1^2
+
\frac{\eta^2\|\zeta^{t+1}\|^2}{2\|AU\|_\infty^2\eta(2L_2 + L_1^2) + 4\|AU\|_\infty^4L_1^2}
\right)
\end{align*}
Then we have that
\begin{align*}
&F(u^T, v^T, s^T) - F(u^0, v^0, s^0)\\
\le &-\frac{1}{2}\sum_{t=0}^{T-1}\left(
\|1/n-\pi(u^t,v^t,s^t)1\|_2^2
+
\|1/m-\pi(u^{t+1},v^t,s^t)\trans1\|_1^2
+
\frac{\eta^2\|\zeta^{t+1}\|^2}{2\|AU\|_\infty^2\eta(2L_2 + L_1^2) + 4\|AU\|_\infty^4L_1^2}
\right)\\
\le&-\frac{1}{2}\cdot
\min\left\{
1,\frac{1}{2\|AU\|_\infty^2\eta(2L_2 + L_1^2) + 4\|AU\|_\infty^4L_1^2}
\right\}\\&\qquad\qquad\times
\sum_{t=0}^{T-1}\left(
\|1/n-\pi(u^t,v^t,s^t)1\|_2^2
+
\|1/m-\pi(u^{t+1},v^t,s^t)\trans1\|_1^2
+
\eta^2\|\xi^{t+1}\|^2_2
\right)\\
\le&
-\frac{1}{2}T\cdot\min\left\{
1,\frac{1}{2\|AU\|_\infty^2\eta(2L_2 + L_1^2) + 4\|AU\|_\infty^4L_1^2}
\right\}\cdot\min\left\{\epsilon_1^2, \frac{\epsilon_2^2}{16\|AU\|_\infty^4}, \frac{\epsilon_2^2}{16\|AU\|_\infty^4}\right\}.
\end{align*}
Therefore,
\begin{align*}
T\le& [F(u^0,v^0,t^0) - F(u^T, v^T, s^T)]\max\left\{
2, 
4\|AU\|_\infty^2\eta(2L_2 + L_1^2) + 8\|AU\|_\infty^4L_1^2
\right\}\\
&\qquad\qquad\max\left\{\frac{1}{\epsilon_1^2}, \frac{16\|AU\|_\infty^4}{\epsilon_2^2}, \frac{16\|AU\|_\infty^4}{\epsilon_2^2}\right\}\\
\le&
\left(
F(u^0,v^0,t^0) - 1 + \frac{\|AU\|_\infty^2}{\eta}
\right)\max\left\{
2, 
4\|AU\|_\infty^2\eta(2L_2 + L_1^2) + 8\|AU\|_\infty^4L_1^2
\right\}\\
&\qquad\qquad\max\left\{\frac{1}{\epsilon_1^2}, \frac{16\|AU\|_\infty^4}{\epsilon_2^2}, \frac{16\|AU\|_\infty^4}{\epsilon_2^2}\right\}\\
=&\mathcal{O}\left(
\log(mn)\cdot\left[
\frac{1}{\epsilon_2^3}+\frac{1}{\epsilon_1^2\epsilon_2}
\right]
\right).
\end{align*}
\end{proof}

\clearpage
\section{
TECHNICAL PROOFS IN SECTION~\ref{Sec:inference}
}
\label{app:inference:theorem}
\subsection{Proof of Theorem~\ref{Proposition:finite:KPW:preliminary}}

\begin{proof}[Proof of Lemma~\ref{Lemma:10}]
Denote $\mathcal{F} = \{f\in\mathcal{H}:~\|f\|_{\mathcal{H}}\le 1\}$.
By the bias-variation decomposition, we have that
\begin{align*}
\mathbb{E}[\powerP{\KPW(\hmu_n, \mu)}]&\le 
\sup_{f\in\mathcal{F}}~\mathbb{E}[\powerP{W(f\#\hmu_n, f\#\mu)}] 
\\&\qquad\qquad+ 
\mathbb{E}\left[ 
\sup_{f\in\mathcal{F}}\left(
\powerP{W(f\#\hmu_n, f\#\mu)} - \mathbb{E}[\powerP{W(f\#\hmu_n, f\#\mu)}]
\right)
\right].
\end{align*}
For fixed $f\in\mathcal{F}$, we can see that 
\begin{align*}
\mathbb{E}[\powerP{W(f\#\hmu_n, f\#\mu)}]
&\le
c_pn^{-\frac{1}{(2p)\lor d}}(\log n)^{\zeta_{p,d}/p}
\end{align*}
where $c_p$ is a constant depending only on $p$ and 
\[
\zeta_{p,d} = \left\{
\begin{aligned}
1,&\quad\text{if }d=2p,\\
0,&\quad\text{otherwise}.
\end{aligned}
\right.
\]

Now we start to upper bound the variation term.
Define the empirical process 
\[
X_{f} = \powerP{W(f\#\hmu_n, f\#\mu)} - \mathbb{E}[\powerP{W(f\#\hmu_n, f\#\mu)}].
\]
It is easy to see that $\mathbb{E}[X_{f}]=0$.
Moreover, we can show that for fixed $f$, the random variable $X_f$ is sub-exponential.
Denote by $Z=\{z_i\}_{i=1}^n$ and $Z'=\{z_i'\}_{i=1}^n$ i.i.d. samples from $f\#\mu$.
Take $g(Z)=\powerP{W(f\#\hmu_n, f\#\mu)}$.
Then we have that 
\begin{align*}
    |g(Z) - g(Z_{(i)}')|&\le \powerP{W(f\#\hmu_n, f\#\hmu_n')}\le n^{-1/(2\lor p)}\|Z - Z'\|_2.
\end{align*}
It follows that 
\[
\sum_{i=1}^n\|\nabla_ig(Z)\|^2\le n^{-2/(2\lor p)},\quad 
\max_{1\le i\le n}\|\nabla_ig(Z)\|\le n^{-1/p}.
\]
Then the Poincare's inequality in Theorem~\ref{Theorem:poincare} implies that 
\[
\text{Pr}
\{
X_f\ge t
\}
\le \exp\left(
-K^{-1}\min\{tn^{1/p}, t^2n^{2/(2\lor p)}\}
\right).
\]
Hence we conclude that $X_f$ is sub-exponential with parameters $(\sqrt{K/2}n^{-1/(2\lor p)}, (K/2)n^{-1/p})$.

For the function space $\mathcal{F}$, define the corresponding metric
\[
\textsf{d}(f,f') = \|f - f'\|_{\mathcal{H}}.
\]
Let $X\sim \mu$.
Then for any $f,f'\in\mathcal{F}$, we have that
\begin{align*}
    &|X_{f} - X_{f'}|\\
\le&\mathbb{E}\big[ 
\powerP{W(f\#\hmu_n, f'\#\hmu_n)} + \powerP{W(f\#\mu, f'\#\mu)}
\big] + \mathbb{E}\big[ 
\powerP{W(f\#\hmu_n, f'\#\hmu_n)} + \powerP{W(f\#\mu, f'\#\mu)}
\big]\\
\le&2\powerP{\mathbb{E}\|f(X) - f'(X)\|_2^p} + \left(
\frac{1}{n}\sum_{i=1}^n\|f(X_i) - f'(X_i)\|_2^p
\right)^{1/p} + \mathbb{E}\left[ 
\left(
\frac{1}{n}\sum_{i=1}^n\|f(X_i) - f'(X_i)\|_2^p
\right)^{1/p}
\right].
\end{align*}

Note that the following upper bound holds for any $f,f'\in\mathcal{F}$ and $x\in\mathbb{R}^D$:
\begin{align*}
\|f(x) - f'(x)\|_2&=\max_{a:~\|a\|_2\le 1}~\inp{f(x) - f'(x)}{a}\\
&=\max_{a:~\|a\|_2\le 1}~\inp{f(x)}{a} - \inp{f'(x)}{a}\\
&=\max_{a:~\|a\|_2\le 1}~\inp{f}{K_xa}_{\mathcal{H}_K} - \inp{f'}{K_xa}_{\mathcal{H}_K}\\
&=\max_{a:~\|a\|_2\le 1}~\inp{f - f'}{K_xa}_{\mathcal{H}_K}\\
&\le \|f-f'\|_{\mathcal{H}_K}\times \max_{a:~\|a\|_2\le 1}~\|K_xa\|_{\mathcal{H}_K}\\
&=\|f-f'\|_{\mathcal{H}_K}\times \max_{a:~\|a\|_2\le 1}~\sqrt{a\trans K(x,x)a}\\
&=\sqrt{B}\|f-f'\|_{\mathcal{H}_K}.
\end{align*}
As a consequence, substituting this upper bound into the relation above implies that 
\[
|X_{f} - X_{f'}| \le 4\sqrt{B}\textsf{d}(f,f').
\]
Applying the $\epsilon$-net argument similar to the Dudley's entropy integral bound~\citep[Theorem~5.22]{wainwright2019high} gives
\[
\mathbb{E}\bigg[
\sup_{f\in\mathcal{F}}
X_{f}
\bigg]
\le \inf_{\epsilon>0}
\left\{
4\sqrt{B}\epsilon + \sqrt{2K}n^{-1/(2\lor p)}\sqrt{\log\mathcal{N}(\mathcal{F}, \textsf{d}, \epsilon)} + (K/2)n^{-1/p}\log\mathcal{N}(\mathcal{F}, \textsf{d}, \epsilon)
\right\}
\]
Taking $\mathcal{N}(\mathcal{F}, \textsf{d}, \epsilon)=\left\lceil\frac{1}{\epsilon}\right\rceil$ and $\epsilon=n^{-1/p}$ implies that
\[
\mathbb{E}\bigg[
\sup_{f\in\mathcal{F}}
X_{f}
\bigg]\lesssim
n^{-1/(2\lor p)}\sqrt{\log(n)} + n^{-1/p}\log(n).
\]

\end{proof}

\begin{proof}[Proof of Lemma~\ref{Lemma:variance}]
We start to upper bound the variance term
\[
\powerP{\KPW(\hmu_n, \mu)} - \mathbb{E}[ \powerP{\KPW(\hmu_n, \mu)}].
\]
Denote by $X=\{x_i\}_{i=1}^n$ and $X'=\{x_i'\}_{i=1}^n$ i.i.d. samples from $\mu$, and let $g(X) = \powerP{\KPW(\hmu_n, \mu)}$.
Based on the triangular inequality, we find that 
\begin{align*}
    |g(X) - g(X')|&\le n^{-1/p}\left(
\sum_{i=1}^n\max_{f\in\mathcal{F}}~\|f(x_i) - f(x_i')\|_2
\right)^{1/p}\\
&\le n^{-1/p}\left(
\sum_{i=1}^nL\|x_i-x_i'\|%
\right)^{1/p}\\
&\le n^{-1/(2\lor p)}L^{1/p}\|X - X'\|.
\end{align*}
It follows that 
\[
\sum_{i=1}^n\|\nabla_ig(Z)\|^2\le n^{-2/(2\lor p)}L^{2/p},\quad 
\max_{1\le i\le n}\|\nabla_ig(Z)\|\le n^{-1/p}L^{1/p}.
\]
Then the Poincare's inequality in Theorem~\ref{Theorem:poincare} implies that 
\[
\text{Pr}
\{
\left|\powerP{\KPW(\hmu_n, \mu)} - \mathbb{E}[ \powerP{\KPW(\hmu_n, \mu)}]\right|\ge t
\}
\le \exp\left(
-K^{-1}\min\{tn^{1/p}L^{-1/p}, t^2n^{2/(2\lor p)}L^{-2/p}\}
\right).
\]
Substituting the right-hand-side with $\alpha$ completes the proof.
\end{proof}

\begin{proof}[Proof of Theorem~\ref{Proposition:finite:KPW:preliminary}]
Based on the triangular inequality, we can see that 
\begin{align*}
&\big| \powerP{\KPW(\hmu_n, \hnu_m)} - \powerP{\KPW(\mu,\nu)}\big|
\le\powerP{\KPW(\hmu_n, \mu)} + \powerP{\KPW(\hnu_m, \nu)}.
\end{align*}
It suffices to upper bound $\powerP{\KPW(\hmu_n, \mu)}$ and $\powerP{\KPW(\hnu_m, \nu)}$ separately.
By the bias-variance decomposition, 
\begin{align*}
    \powerP{\KPW(\hmu_n, \mu)}&\le \mathbb{E}[ \powerP{\KPW(\hmu_n, \mu)}] + \bigg(
    \powerP{\KPW(\hmu_n, \mu)} - \mathbb{E}[ \powerP{\KPW(\hmu_n, \mu)}]
    \bigg),
\end{align*}
where the first term quantifies the bias for empirical estimation, and the second term quantifies the variance of estimation.
The bias term can be upper bounded by applying Lemma~\ref{Lemma:10}, and the variance term can be upper bounded by applying Lemma~\ref{Lemma:variance}.
In summary, with probability at least $1-\alpha$, it holds that 
\begin{align*}
\powerP{\KPW(\hmu_n, \mu)}&\lesssim \max\left\{
n^{-1/p}K\log(1/\alpha), n^{-1/(2\lor p)}\sqrt{K\log(1/\alpha)}
\right\}L^{1/p}\\
&\quad + n^{-\frac{1}{(2p)\lor d}}(\log n)^{\zeta_{p,d}/p}
+
n^{-1/(2\lor p)}\sqrt{\log(n)} + n^{-1/p}\log(n).
\end{align*}
The upper bound for $\powerP{\KPW(\hnu_m, \nu)}$ can be proceeded similarly.

\end{proof}

\subsection{Testing Performance}

Based on the finite-sample guarantee in Theorem~\ref{Proposition:finite:KPW:preliminary}, we are able to characterize the performance of the KPW test. 
To make the type-I error below than $\alpha$, we reject the null hypothesis as long as the empirical statistic $\KPW(\hmu_n,\hnu_m)\ge\gamma_{m,n}$, where
\begin{align*}
    \gamma_{m,n}^{1/p} &\sim\max\left\{
N^{-1/p}K\log(1/\alpha), N^{-1/(2\lor p)}\sqrt{K\log(1/\alpha)}
\right\}L^{1/p}\\
&\qquad + N^{-\frac{1}{(2p)\lor d}}(\log N)^{\zeta_{p,d}/p}
+
N^{-1/(2\lor p)}\sqrt{\log(n)} + N^{-1/p}\log(n).
\end{align*}
For the alternative hypothesis, assume that target distributions $\mu$ and $\nu$ satisfy $\KPW(\mu,\nu)>\gamma_{m,n}$.
Then the type-II error can be upper bounded as 
\begin{align*}
    &\Pr{}_{\mathcal{H}_1}\bigg(
    \KPW(\hmu_n,\hnu_m)<\gamma_{m,n}
    \bigg)\\
    =&\Pr{}_{\mathcal{H}_1}\bigg(
    \KPW(\hmu_n,\hnu_m) - \KPW(\mu,\nu)<\gamma_{m,n} - \KPW(\mu,\nu)
    \bigg)\\
    =&\Pr{}_{\mathcal{H}_1}\bigg(
    \KPW(\mu,\nu) - \KPW(\hmu_n,\hnu_m)> \KPW(\mu,\nu) - \gamma_{m,n}
    \bigg)\\
    \le&\Pr{}_{\mathcal{H}_1}\bigg(
    |\KPW(\mu,\nu) - \KPW(\hmu_n,\hnu_m)| > \KPW(\mu,\nu) - \gamma_{m,n}
    \bigg)\\
    \le&\frac{\mathbb{E}\left(\KPW(\mu,\nu) - \KPW(\hmu_n,\hnu_m)\right)^2}{\big(\KPW(\mu,\nu) - \gamma_{m,n}\big)^2}.
\end{align*}

\subsection{Finite-sample Guarantee for \texorpdfstring{$p\in[1,2)$}{p}}
\label{Sec:sub:finite:sample:1:2}
In this subsection, we discuss the finite-sample guarantee for KPW distance with $p$-Wasserstein distance for $p\in[1,2)$.
Note that it is not necessary to rely on the Poincare inequality or projection poincare inequality to obtain the result.
We first present several technical lemmas before showing the final result.
\begin{lemma}\label{Lemma:norm:f}
Based on Assumption~\ref{Assumption:1}, for $f\in\{f\in\mathcal{H}:~\|f\|_{\mathcal{H}}\le 1\}$, we have
\[
\|f(x)\|_2\le \sqrt{B},\quad \forall x\in\mathbb{R}^D.
\]
\end{lemma}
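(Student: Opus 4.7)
The plan is to bound $\|f(x)\|_2$ by combining the reproducing property of the vector-valued RKHS with Cauchy–Schwarz in $\mathcal{H}$, then use Assumption~\ref{Assumption:1} to control the norm of the kernel section $K_x y$.

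First, I would express $\|f(x)\|_2$ via duality in $\mathbb{R}^d$ and rewrite the inner product using the reproducing property: for every fixed $x\in\mathbb{R}^D$,
\[
\|f(x)\|_2 \;=\; \sup_{y\in\mathbb{R}^d,\,\|y\|_2\le 1}\inp{f(x)}{y}\;=\;\sup_{\|y\|_2\le 1}\inp{f}{K_x y}_{\mathcal{H}_K}.
\]
Then Cauchy–Schwarz in $\mathcal{H}_K$ gives $\inp{f}{K_x y}_{\mathcal{H}_K}\le \|f\|_{\mathcal{H}_K}\|K_x y\|_{\mathcal{H}_K}\le \|K_x y\|_{\mathcal{H}_K}$, using $\|f\|_{\mathcal{H}_K}\le 1$.

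Next, I would compute $\|K_x y\|_{\mathcal{H}_K}^2$ by applying the reproducing property again to the element $K_x y\in\mathcal{H}_K$:
\[
\|K_x y\|_{\mathcal{H}_K}^2 \;=\; \inp{K_x y}{K_x y}_{\mathcal{H}_K} \;=\; \inp{(K_x y)(x)}{y} \;=\; \inp{K(x,x) y}{y}.
\]
By Assumption~\ref{Assumption:1}, $K(x,x)\preceq B I_d$, so $\inp{K(x,x) y}{y}\le B\|y\|_2^2\le B$ whenever $\|y\|_2\le 1$.

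Combining the two bounds yields $\|f(x)\|_2\le \sqrt{B}$ uniformly in $x$. The only subtlety is to invoke the reproducing property in its vector-valued form correctly (taking inner products against kernel sections $K_x y$ rather than scalar evaluation functionals); once that is set up, the rest is a one-line Cauchy–Schwarz estimate, so I do not anticipate any real obstacle here.
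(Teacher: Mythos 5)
Your proof is correct and follows essentially the same route as the paper: apply the reproducing property to reduce to an $\mathcal{H}$-inner product, invoke Cauchy--Schwarz, and then bound $\|K_x y\|_{\mathcal{H}}^2=\inp{K(x,x)y}{y}$ using Assumption~\ref{Assumption:1}. The only cosmetic difference is that you introduce an arbitrary test vector $y$ with $\|y\|_2\le 1$ via duality, which avoids the slightly self-referential step in the paper (where $y=f(x)$ is chosen, producing the inequality $\|f(x)\|_2^2\le\sqrt{B}\,\|f(x)\|_2$ and then dividing); your version is arguably cleaner but the underlying argument is identical.
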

\begin{proof}[Proof of Lemma~\ref{Lemma:norm:f}]
For fixed $x\in\mathcal{X}$, the norm of $f(x)$ can be upper bounded as the following:
\[
\|f(x)\|_2^2=\inp{f(x)}{f(x)}=\inp{f}{K_xf(x)}_{\mathcal{H}}\le \|f\|_{\mathcal{H}}\|K_xf(x)\|_{\mathcal{H}}
\le \|K_xf(x)\|_{\mathcal{H}}.
\]
In particular,
\begin{align*}
    \|K_xf(x)\|_{\mathcal{H}}^2&=
\inp{K_xf(x)}{K_xf(x)}_{\mathcal{H}}
\\&=
\inp{\big(
K_xf(x)
\big)f(x)}{f(x)}
\\&=
\inp{K(x,f(x))f(x)}{f(x)}
\\&=f(x)\trans K(x,f(x))f(x)
\\&\le B\|f(x)\|^2_2
\end{align*}
Combining those two relations above implies the desired result.
\end{proof}

\begin{lemma}\label{Lemma:bias:p12}
For $p\in[1,2)$, the bias term of empirical KPW distance can be upper bounded as 
\[
\mathbb{E}[\powerP{\KPW(\hmu_n, \mu)}]\lesssim
n^{-\frac{1}{(2p)\lor d}}(\log n)^{\zeta_{p,d}/p}
+
n^{1/2-1/p}\sqrt{\log(n)}+ n^{-1/p}.
\]
where $\zeta_{p,d}=1$ if $d=2p$ and $\zeta_{p,d}=0$ otherwise.
\end{lemma}
\begin{proof}[Proof of Lemma~\ref{Lemma:bias:p12}]
Following the similar argument as in Lemma~\ref{Lemma:10}, we can see that 
\begin{align*}
\mathbb{E}[\powerP{\KPW(\hmu_n, \mu)}]&\le 
\sup_{f\in\mathcal{F}}~\mathbb{E}[\powerP{W(f\#\hmu_n, f\#\mu)}] 
\\&\qquad+ 
\mathbb{E}\left[ 
\sup_{f\in\mathcal{F}}\left(
\powerP{W(f\#\hmu_n, f\#\mu)} - \mathbb{E}[\powerP{W(f\#\hmu_n, f\#\mu)}]
\right)
\right],
\end{align*}
and the first term can also be bounded similarly.
To upper bound the second term, define the empirical process $\{X_f\}$ as in Lemma~\ref{Lemma:10}.
For fixed $f$, the random variable $X_f$ can be shown to be sub-Gaussian.
Denote by $Z=\{z_i\}_{i=1}^n$ and $Z_{(i)}'$ the sample set so that the $i$-th element is different. 
Take $g(Z)=\powerP{W(f\#\hmu_n, f\#\mu)}$.
Then we have that 
\begin{align*}
    |g(Z) - g(Z_{(i)}')|&\le \powerP{W(f\#\hmu_n, f\#\hmu_n')}\le \left(
    \frac{1}{n}\|f(z_i) - f(z_i')\|^p_2
    \right)^{1/p}\\
    &\le 
    n^{-1/p}2\sqrt{B}.
\end{align*}
Therefore, applying the McDiarmid’s inequality in Theorem~\ref{The:Mcdiarmid} implies
\[
\text{Pr}
\{
\left|X_f\right|\ge u
\}
\le 2\exp\left(
-\frac{u^2}{2Bn^{1-2/p}}
\right).
\]
Applying Lemma~\ref{Lemma:equivalent:subGaussian} implies that for fixed $\ell$, the random variable $X_f$ is sub-Gaussian with the parameter $\sigma^2=36Bn^{1-2/p}$.
Then applying the $\epsilon$-net argument similar to the Dudley's entropy integral bound~\citep[Theorem~5.22]{wainwright2019high} gives
\[
\mathbb{E}\bigg[
\sup_{f\in\mathcal{F}}
X_{f}
\bigg]
\le \inf_{\epsilon>0}
\left\{
4\sqrt{B}\epsilon + \sqrt{36Bn^{1-2/p}}\sqrt{2\log\mathcal{N}(\mathcal{F}, \textsf{d}, \epsilon)}
\right\}.
\]
Taking $\mathcal{N}(\mathcal{F}, \textsf{d}, \epsilon)=\left\lceil\frac{1}{\epsilon}\right\rceil$ and $\epsilon=n^{-1/p}$ implies that
\[
\mathbb{E}\bigg[
\sup_{f\in\mathcal{F}}
X_{f}
\bigg]\lesssim
n^{1/2-1/p}\sqrt{\log(n)}+ n^{-1/p}.
\]
\end{proof}

\begin{lemma}\label{Lemma:variance:p12}
For $p\in[1,2)$, with  with probability at least $1-\alpha$, it holds that 
\[
\left|\powerP{\KPW(\hmu_n, \mu)} - \mathbb{E}[ \powerP{\KPW(\hmu_n, \mu)}]\right|
\le
n^{1/2-1/p}\sqrt{2B\log\frac{2}{\alpha}}.
\]
\end{lemma}
\begin{proof}[Proof of Lemma~\ref{Lemma:variance:p12}]
Denote by $Z=\{z_i\}_{i=1}^n$ and $Z_{(i)}'$ the sample set so that the $i$-th element is different. 
Take $g(Z)=\powerP{\KPW(\hmu_n, \mu)}$.
Then we can see that 
\[
|g(Z) - g(Z_{(i)}')|\le \powerP{\KPW(\hmu_n, \hmu_n')}\le n^{-1/p}2\sqrt{B}.
\]
Then applying the McDiarmid’s inequality in Theorem~\ref{The:Mcdiarmid} implies
\[
\text{Pr}
\left\{
\left|\powerP{\KPW(\hmu_n, \mu)} - \mathbb{E}[ \powerP{\KPW(\hmu_n, \mu)}]\right|\ge u
\right\}
\le 2\exp\left(
-\frac{u^2}{2Bn^{1-2/p}}
\right).
\]
\end{proof}
Based on Lemma~\ref{Lemma:bias:p12} and Lemma~\ref{Lemma:variance:p12}, we obtain the uncertainty quantification result in Theorem~\ref{theorem:p12:UQ}.

\clearpage
\section{
IMPLEMENTATION DETAILS FOR COMPUTING KPW DISTANCE
}
\label{Appendix:Implementation}

The variable $s$ is initialized to be a uniform random vector over sphere.
The dual variable $v$ is initialized to be a Gaussian random vector with unit covariance.
When updating the block of variables $u^{t+1}$ and $v^{t+1}$, we make the change of variables $(u')^{t+1} = \exp(u^{t+1})$ and $(v')^{t+1} = \exp(v^{t+1})$. We update $(u')^{t+1}$ and $(v')^{t+1}$ instead to accelerate the computation:
\begin{align*}
    (u')^{t+1} &= \left\{\frac{1/n}{\sum_j\exp\left(-\frac{1}{\eta}c_{i,j}+(v_j')^t\right)}\right\}_i\\
    (v')^{t+1} &= \left\{\frac{1/m}{\sum_i\exp\left(-\frac{1}{\eta}c_{i,j}+(u_i')^{t+1}\right)}\right\}_j,
\end{align*}
and we further store the matrix $A$ with $A_{i,j}=\exp\left(-\frac{1}{\eta}c_{i,j}\right)$ in advance to reduce the computational cost.
The transport mapping $\pi^{t+1}\triangleq(\pi_{i,j}(u^{t+1}, v^{t+1}, s^t))_{i,j}$ can be formulated without going through a for loop but only with multiplication operators:
\[
\pi^{t+1} = (u')^{t+1}~\texttt{.*}~A~\texttt{.*}~[(v')^{t+1}]\trans,
\]
where the operator \texttt{.*} means we multiply two objects componentiwisely in terms of array broadcasting.
When updating $\zeta^{t+1}$, we first formulate the matrix $V^{t+1}$ with
\[
V^{t+1}_{i,j} = \sum_{i,j}\pi^{t+1}_{i,j}A_{i,j}\trans A_{i,j}
\]
and then continue the matrix multiplication procedure in \eqref{Eq:update:zeta:closed}.
Denote by $G_i$ the $i$-th row block of the gram matrix $G$, then 
\begin{align*}
V^{t+1} &= \left\{\sum_{i,j}\pi^{t+1}_{i,j}(G_i + G_{n+j})\trans(G_i + G_{n+j})\right\}_{i,j}\\
&=\left\{\sum_{i,j}\pi^{t+1}_{i,j}(G_i\trans G_i + G_{n+j}\trans G_{n+j} + G_{n+j}\trans G_i + G_i\trans G_{n+j})\right\}_{i,j}.
\end{align*}
Consequently, we can compute each of the four components in the formula above without executing double for loops and then sum them up to obtain the matrix $V^{t+1}$.
During the numerical implementation, we also find that the computation is sensitive to the choice of $\eta$. 
 This phenomenon has also been observed when using Sinkhorn's algorithm to compute Wasserstein distance or projected Wasserstein distance. 
 When $\eta$ is too small, the iteration update may have numerical instability issues. 
 When $\eta$ is too large, the obtained solution is far away from the optimal solution to the original KPW distance. 
 We have tried the best to tune this parameter to make the algorithm maintain the best performance. 
 How to tune this hyper-parameter systematically is left for future works.

\clearpage
\section{
DETAILS ABOUT EXPERIMENT
}\label{Appendix:Experiment:detail}

\subsection{Sample Complexity}
\label{Appendix:experiment:sample:detail}

\begin{figure*}[t]
\centering
\includegraphics[width=\textwidth]{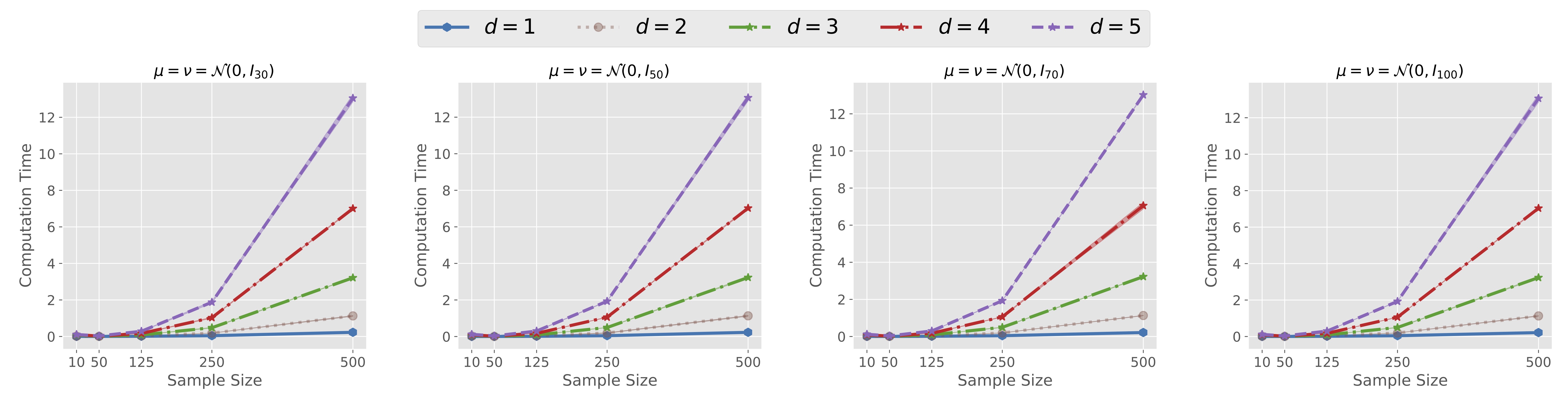}
\caption{Mean computation time for computing $\KPW(\hmu_n, \hnu_n)$ for varying $n$. Results are averaged over $10$ independent trials.}
\label{Fig:sample:size:time}
\end{figure*}

In this experiment, we fix hyper-parameters $\sigma^2=\texttt{1}, \rho=\texttt{0.5}$ for computing KPW distances.
The values of empirical KPW distances across different choices of sample size are reported in Figure~\ref{fig:sample:Gaussian}, and the corresponding computation time is reported in Figure~\ref{Fig:sample:size:time}.
From the plot we can see that it is efficient to compute KPW distances with reasonably small sample size $n$ and projected dimension $d$.

\subsection{Configurations}\label{Appendix:config:baseline}
All methods are implemented using python 3.7 (Pytorch 1.1) on a MacBook Pro labtop with 32GB of memory.
When running the code, there is no swapping of memory and the average CPU frequency is $3.2$~GHZ.
We compute the projected Wasserstein distance based on the official code in \url{https://github.com/fanchenyou/PRW}.
We run the MMD-O test based on the code in \url{https://github.com/fengliu90/DK-for-TST}.
We run the MMD-NTK test based on the code in \url{https://github.com/xycheng/NTK-MMD}.
From extensive experiments we realize that MMD-NTK is the most computationally efficient test, but its power does not scale the best.
On the other hand, this method can be useful when performing a test for the large-sampled case, while our method may be intractable to compute in short time.
We run the ME test based on the code in \url{https://github.com/wittawatj/interpretable-test}.

\subsection{Implementation of Cross-Validation}\label{Appendix:cross:validation}
The candidate choices of hyper-parameters $\rho$ and $\sigma^2$ are within the set 
\[
\{(\rho,\sigma^2):~\sigma^2 = a\cdot \hat{\sigma}^2:~a\in\{0.5,1,2\}, \rho\in\{0.25,0.5,0.75\}\},
\]
where $\hat{\sigma}^2$ denotes the empirical median of pairwise distances between observations.
To choose $\rho$ and $\sigma^2$, we further split the training set into the training and validation dataset, which contain $70\%$ and $30\%$ data, respectively.
For each choice of hyper-parameters we use the training dataset to obtain a nonlinear projector and examine its hold-out performance on the validation dataset, which is quantified as the negative of the $p$-value for two-sample tests between two collection of samples in the validation dataset.
We choose hyper-parameters $\rho$ and $\sigma^2$ with the best hold-out performance.

\subsection{Tests for Synthetic Datasets}\label{Appendix:test:sync:dataset}
When studying tests on Gaussian distributions, we take both the training and testing sample sizes $N$ to be $50$.
When reproducing the experiments corresponding to the left two figures in Fig.~\ref{fig:4}, we take the dimension $D\in\{20,40,60,80,100,120,140,160\}$.
When reproducing the experiments corresponding to the right two figures, we take the sample size $n=m\in\{80,100,140,180,250\}$.

\subsection{Tests for MNIST handwritten digits}\label{Appendix:test:mnist:detail}
Table~\ref{tab:MNIST_RES1:type:I} present the type-I error for various tests in MNIST dataset, from which we can see that all tests have the type-I error close to $\alpha=0.05$.

\begin{table*}[t]
\centering
  \caption{
Average type-I error and standard error for two-sample tests in \emph{MNIST} dataset across different choices of sample size.
   } \label{tab:MNIST_RES1:type:I}
\vspace{1mm}
\begin{tabular}{c|>{\centering}p{0.15\textwidth}>{\centering}p{0.15\textwidth}>{\centering}p{0.15\textwidth}>{\centering}p{0.15\textwidth}>{\centering\arraybackslash}p{0.15\textwidth}}
\toprule
$N$ & MMD-NTK & MMD-O & ME & PW & KPW\\
\midrule
\phantom{1}$\,$200 & \mnstd{0.057}{0.0010} & \mnstd{0.056}{0.0006} & \mnstd{0.044}{0.0003} & \mnstd{0.056}{0.0004} & \mnstd{0.061}{0.0005} \\
\phantom{2}$\,$250 & \mnstd{0.051}{0.0003} & \mnstd{0.060}{0.0001} & \mnstd{0.065}{0.0002} & \mnstd{0.046}{0.0003} & \mnstd{0.048}{0.0002} \\
\phantom{1}$\,$300 & \mnstd{0.068}{0.0006} & \mnstd{0.055}{0.0003} & \mnstd{0.059}{0.0007} & \mnstd{0.056}{0.0002} & \mnstd{0.053}{0.0001} \\
\phantom{1}$\,$400 & \mnstd{0.049}{0.0007} & \mnstd{0.058}{0.0002} & \mnstd{0.041}{0.0002} & \mnstd{0.061}{0.0006} & \mnstd{0.056}{0.0006} \\
\phantom{1}$\,$500 & \mnstd{0.061}{0.0006} & \mnstd{0.054}{0.0004} & \mnstd{0.060}{0.0002} & \mnstd{0.049}{0.0003} & \mnstd{0.047}{0.0004}  \\
\midrule
Avg. & 0.057 & 0.056 & 0.053 & 0.054 & {0.053} \\
\bottomrule
\end{tabular}
\vspace{-1em}
\end{table*}

\subsection{Human activity detection}\label{Appendix:Human:data}
The pre-processing of data is as follows.
We first remove frames in which the person is standing still or with little movements.
Then we delete the first few frames to make the action of bending consist of $500$ frames.
Next we delete the last few frames to make the action of throwing consist of $355$ frames.
We take the window size $W=100$.
To perform online change point detection, we pre-train a nonlinear projector using the data before time index $300$ and compute the null statistics for many times to obtain the true threshold.
Then we compute the detection statistic by comparing the distribution between the block of data before time $300$ and the data from the sliding window.
We reject the null hypothesis and claim a change is happened if the statistic is above the threshold.
The plot of the detection statistic over time after the time index $400$ is presented in Fig.~\ref{Fig:stat:change}, and the delay detection time corresponding to all users are reported in Table~\ref{tab:MSRC:full}.

\begin{figure*}[t]
\centering
\includegraphics[width=\textwidth]{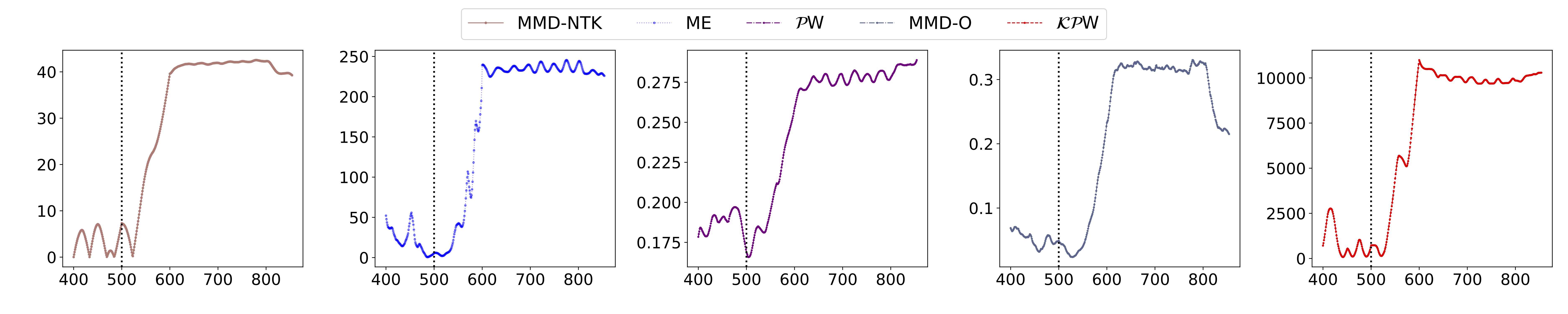}
\includegraphics[width=\textwidth]{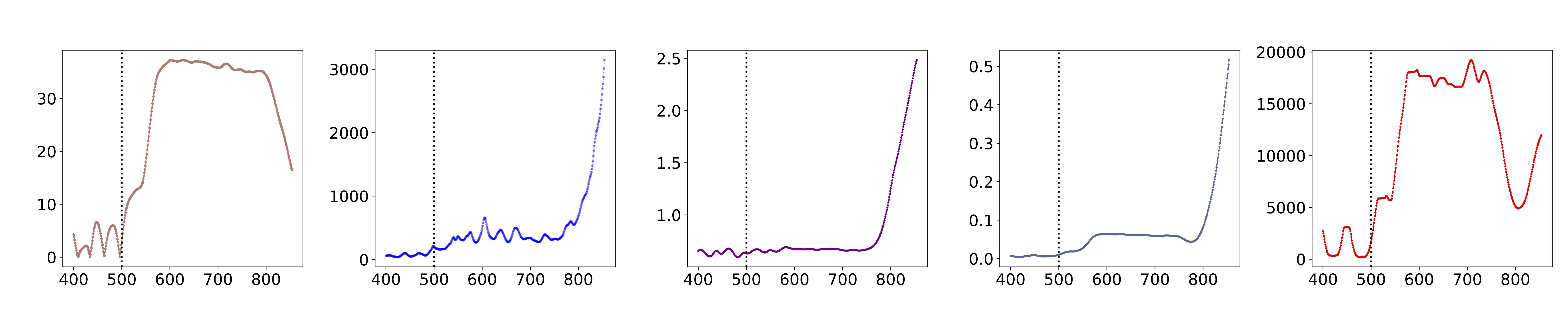}
\includegraphics[width=\textwidth]{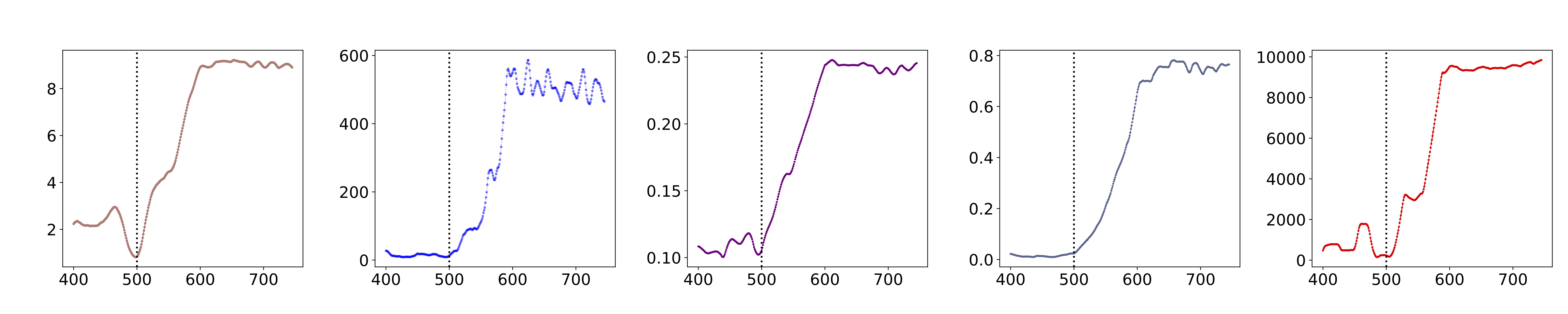}
\includegraphics[width=\textwidth]{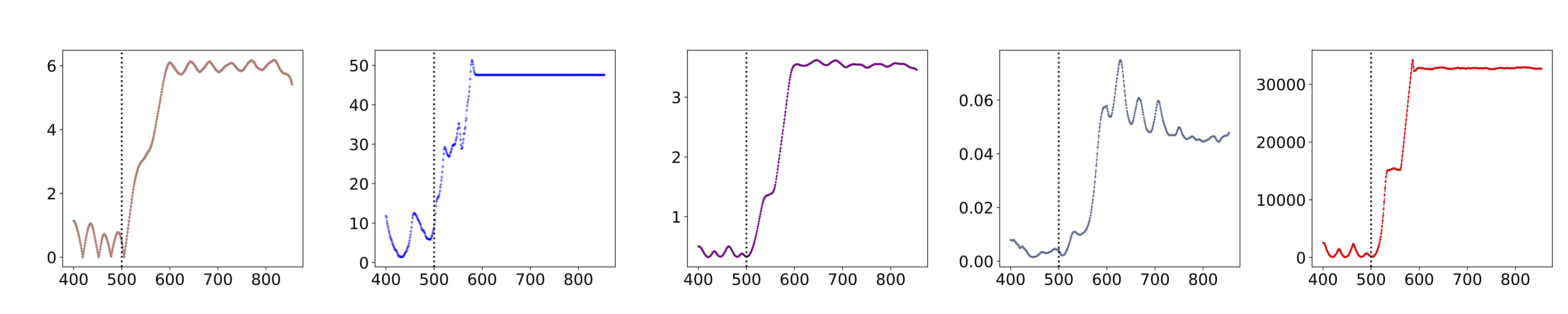}
\caption{
Comparison of detection statistics from bending to throwing for various testing procedures.
Black dash line indicates the true change-point.
Each row corresponds to detection results for each user.
}
\label{Fig:stat:change}
\end{figure*}

\section{
IMPACT OF HYPER-PARAMETERS
}\label{Appendix:impact:hyper:parameter}
\subsection{Impact of Projected Dimension \texorpdfstring{$d$}{d}}\label{Sec:H:A}
We prefer to choose the projected dimension $d$ with relatively small values since the testing statistic will have poor sample complexity rate and is expansive to compute for large $d$.
In this section, we examine the testing performance for different choices of $d$.
In particular, we perform the KPW test on Gaussian distributions (with diagonal covariance matrices, $D=128$ and $n=m=50$) and Gaussian mixture distributions (with $D=100$ and $n=m=100$) following the setup in Section~\ref{Sec:Synetic}, the results of which are reported in Fig.~\ref{Fig:choose:d}.
From the plot we can see that the testing power is generally better for $d>1$, which suggests that using vector-valued RKHS is better than using classical scalar-valued RKHS.
Moreover, we observe the performance is insensitive to the choice of $d$ as long as we take $d>1$.

\begin{figure*}[t!]
\centering
\includegraphics[width=0.7\textwidth]{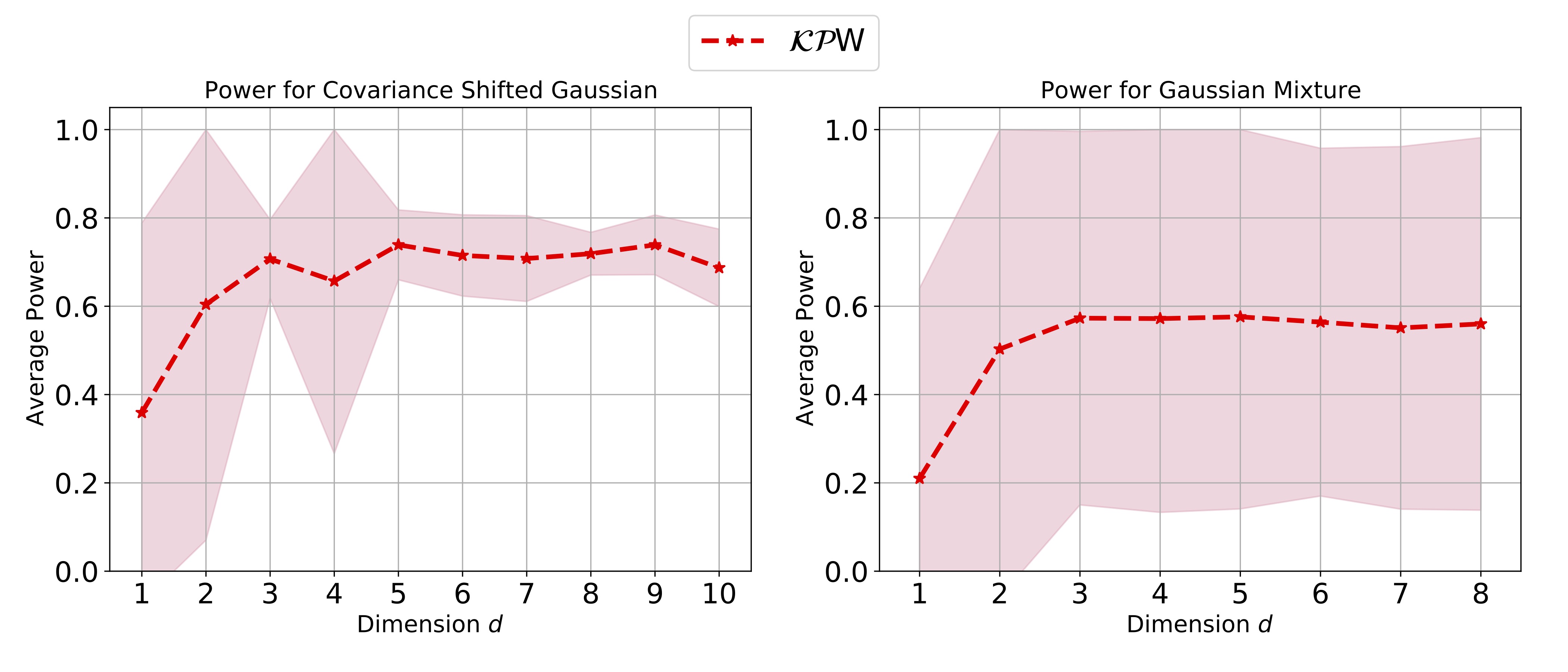}
\caption{Average power for KPW test across different choices of projected dimension $d$. Left: Gaussian distribution; Right: Gaussian mixture distribution. Results are averaged over $10$ independent trials.}
\label{Fig:choose:d}
\end{figure*}

\subsection{Impact of Entropic Regularization Parameter \texorpdfstring{$\eta$}{eta}}
As pointed out in \cite{genevay2019sample}, the entropic regularization in \eqref{Eq:max:min:entropy} could alerady improve the sample complexity result of Wasserstein distance. 
We perform experiments in this subsection to validate the impact of the entropic regularization parameter $\eta$ for the performance of KPW test.
The generated data follows Gaussian distributions (with $n=m=100$) or Gaussian mixture distributions (with $n=m=200$) with different choices of dimension $D$ and fixed sample size.
Benchmark methods include 1) \emph{KPW test with $\eta=0$} (here Wasserstein distance is computed exactly and we apply alternating optimization procedure as a heuristic); 2) \emph{Sinkhorn test} with the same $\eta$ as in the KPW test (in which we take the Sinkhorn divergence as the statistic and all training and testing samples are used); 3) \emph{Sinkhorn+} (using all data and post-selecting $\eta$ with the best performance).
Experiment results are reported in Fig.~\ref{Fig:choose:eta}, from which we can see that even Sinkhorn+ test has the curse of dimension issue.
Moreover, the KPW test with $\eta=0$ has similar performance as the KPW test.
Hence, we can assert that the KPW test is capable of alleviating the curse of dimension mainly due to the kernel projection operator instead of the entropic regularization.

\begin{figure*}[t!]
\centering
\includegraphics[width=0.7\textwidth]{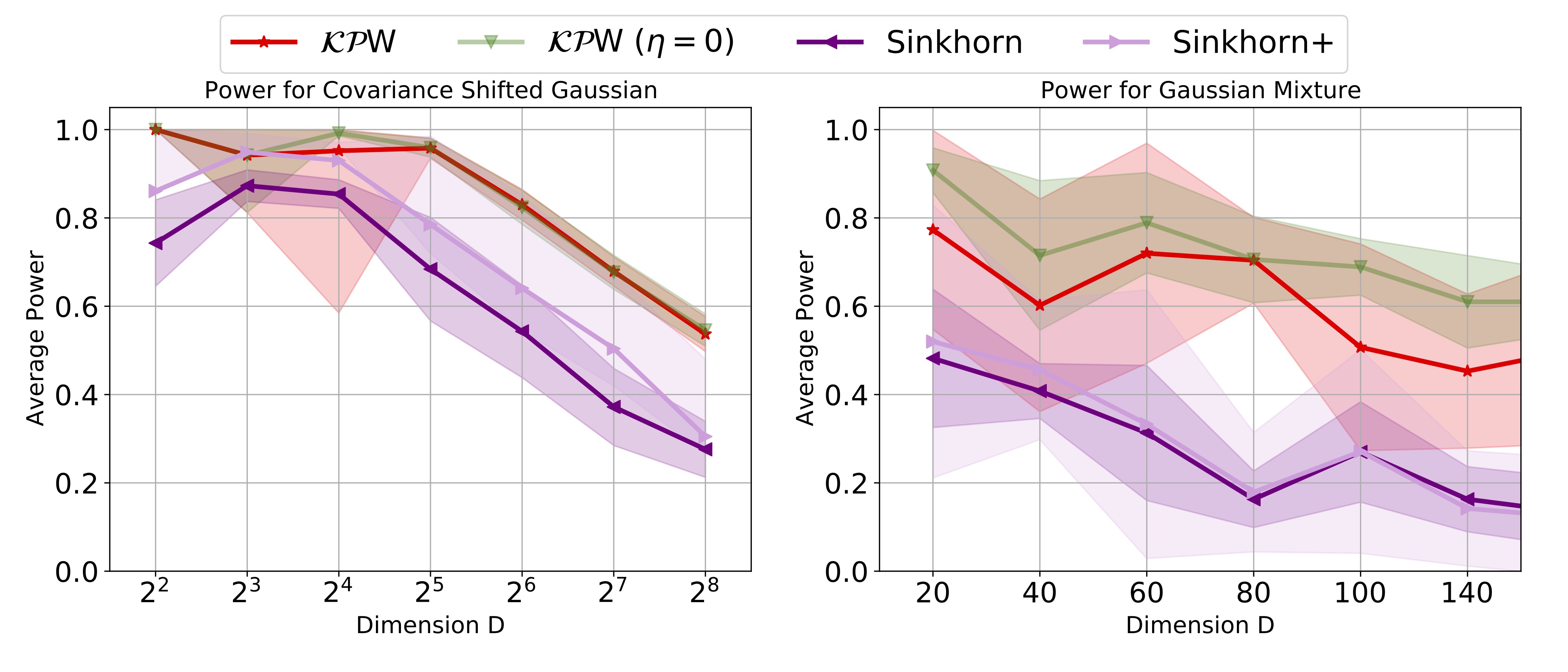}
\caption{Average power for KPW tests and Sinkhorn tests across different choices of data dimension $D$. Left: Gaussian distribution; Right: Gaussian mixture distribution. Results are averaged over $10$ independent trials.}
\label{Fig:choose:eta}
\end{figure*}

\section{
SOCIETAL IMPACT
}
Two-sample testing is not only a fundamental problem in statistics but also growing increasing attention in machine learning.
On the one hand, it plays a key role in modern applications such as anomaly detection and health care.
On the other hand, it can help to design better algorithms for artificial intelligence such as GANs.
Our work shows a competitive performance for dealing with high-dimensional data by nonlinear dimensionality reduction using kernel trick.
It identifies the difference between two collections of samples by extracting the most representative nonlinear features. 
We hope this work can be applied to design more powerful algorithms in those areas.

\end{document}